\documentclass[review,3p,times]{elsarticle}
\usepackage{}
\usepackage{bbm}
\usepackage{amsfonts}
\usepackage{mathrsfs}
\usepackage{amsmath}
\usepackage{tikz}
\usepackage{newlfont}
\usepackage{float}
\usepackage{graphicx}
\usepackage[colorlinks,linkcolor=red,anchorcolor=blue,citecolor=green]{hyperref}
\usepackage{listings}
\usepackage{amsthm}
\usepackage{lineno}
\usepackage[linesnumbered, ruled, lined,boxed,commentsnumbered]{algorithm2e}[1]
\usepackage{amssymb}
\usepackage{subfigure}
\usepackage{epstopdf}
\usepackage{graphicx}
\usepackage[matrix,arrow]{xy}

\SetKwInput{KwIn}{Input}
\SetKwInput{KwOut}{Output}
\biboptions{sort&compress}
\theoremstyle{plain}
  \newtheorem{thm}{Theorem}[section]
  \newtheorem{lem}[thm]{Lemma}
  \newtheorem{prop}[thm]{Proposition}
  \newtheorem{cor}[thm]{Corollary}
\theoremstyle{definition}
  \newtheorem{defn}[thm]{Definition}
  
  \newtheorem{exmp}[thm]{Example}
  \newtheorem{rem}[thm]{Remark}

\theoremstyle{break}

\theoremstyle{definition}

  \usepackage{diagbox}
\makeatletter
\newcommand\figcaption{\def\@captype{figure}\caption}
\newcommand\tabcaption{\def\@captype{table}\caption}
\makeatother
\allowdisplaybreaks
\begin{document}
\begin{frontmatter}

\title{$S^{\ast}$-well-filtered spaces and $d^{\ast}$-spaces\tnoteref{label2}}

\author{Nana Han\fnref{label1}}
\ead{hnnmaths@163.com}
\author{Siheng Chen\fnref{label3}}
\ead{mathlife@sina.cn}
\author{Qingguo Li\fnref{label1}\corref{cor1}}
\ead{liqingguoli@aliyun.com}
\cortext[cor1]{Corresponding author}
\address[label1]{School of Mathematics, Hunan University, Changsha, Hunan, 410082, PR China}
\address[label3]{School of Mathematics and Statistics, Changsha University of Science and Technology, Changsha, Hunan, 410114, PR China}
\tnotetext[label2]{This work was supported by the National Natural Science Foundation of China (12231007), National Natural Science Foundation of China (12401594) and Scientific Research Foundation of Hunan Provincial Education Department (25C0114).}
\begin{abstract}
  Recently, Xu proposed a strongly well-filtered space in \cite{x2026} and systematically investigated some of its properties and characterizations. In this paper, we introduce a new class of $T_{0}$-spaces called $S^{\ast}$-well-filtered spaces, which is strictly larger than the class of strongly well-filtered spaces. First, we establish some connections among $S^{\ast}$-well-filtered spaces, $d^{\ast}$-spaces and weak well-filtered spaces. Then it is demonstrated that for any dcpo $P$, the Scott space $\Sigma P$ is a $d^{\ast}$-space if and only if it is $S^{\ast}$-well-filtered. Furthermore, some basic properties of $S^{\ast}$-well-filtered spaces are discussed. We prove that if $Y$ is an $S^{\ast}$-well-filtered space, the function space $TOP(X, Y)$ equipped with the Isbell topology may not be an $S^{\ast}$-well-filtered space. Finally, we study the $S^{\ast}$-well-filteredness of Smyth power spaces. In addition, Johnstone's non-sober dcpo example is shown to be $S^{\ast}$-well-filtered yet it is not strongly well-filtered, thereby establishing an obvious distinction between these two classes of dcpos.
\end{abstract}
\begin{keyword}
 Strongly well-filtered space; $S^{\ast}$-well-filtered space; Johnstone's example; Function space; Smyth power space
\end{keyword}
\end{frontmatter}
\section{Introduction}\label{s1}
Domain theory serves as a foundational cornerstone in denotational semantics of programming languages (see \cite{s1982,g2003}). Sober spaces, well-filtered spaces and d-spaces as three of the most crucial classes of spaces in domain theory have been extensively studied by many scholars in various aspects. For example, in \cite{ll2021}, Liu, Li and Ho proved that for any nonempty $T_{0}$-space $X$, if the function space $[X, Y]$ of continuous functions from $X$ to $Y$ equipped with the Isbell topology is a d-space (resp., well-filtered space), then $Y$ is a d-space (resp., well-filtered space). In \cite{l2023}, Li et al. proved that for any nonempty $T_{0}$-space $X$, if $Y$ is a well-filtered space, then the function space $[X, Y]$ equipped with the Isbell topology is a well-filtered space. Meanwhile, they also showed that for any nonempty $T_{0}$-space $X$, if $Y$ and $\Sigma(\mathcal {O}(X))$ are sober spaces, then the function space $[X, Y]$ equipped with the Isbell topology is a sober space. In \cite{xs2020}, Xu et al. proved that a $T_{0}$-space $X$ is well-filtered iff $P_{S}(X)$ is well-filtered iff $P_{S}(X)$ is a d-space, where the Smyth power space $P_{S}(X)$ of a space $X$ is the set of all nonempty saturated compact sets equipped with the upper Vietoris topology.
 Particularly, some scholars have studied the three classes of spaces from the perspective of category theory and obtained that the category $\mathbf{Sob}$ (resp., $\mathbf{Top}_{w}$, $\mathbf{Top}_{d}$ ) of all sober spaces (resp., well-filtered spaces, d-spaces) with continuous mappings is reflective in the category $\mathbf{Top}_{0}$ of all $T_{0}$-spaces with continuous mappings (see \cite{w1981,w2019,k2009}).

It is well-known that Rudin's Lemma occupies a vital position in domain theory (see \cite{g2003,g2013}). Subsequently, a topological variant of Rudin's Lemma was provided by Heckmann and Keimel in \cite{h2013}. Inspired by the topological variant of Rudin's Lemma and Xi and Lawson's work \cite{x2017} on well-filtered spaces,  a series of characterizations of well-filtered spaces and sober spaces, and the concept of strong d-spaces were presented by Xu and Zhao in \cite{x2020}. Since then, some properties of the strong d-spaces have been discussed by many scholars \cite{l2023,x2021,x2020,al2024,bl2024}. Recently, in order to explore finer links between $T_{2}$-spaces and d-spaces, the definition of strongly well-filtered spaces was proposed by Xu in \cite{x2026}. It is closely related to well-filtered spaces and strong d-spaces.

In recent years, the investigation of some weakened versions of sober space and well-filtered space has become a significant research direction. For example, in \cite{l2017}, Lu and Li introduced the concept of weak well-filtered space and showed that Johnstone's example is weak well-filtered but not well-filtered. Furthermore, they proved that on weak well-filtered dcpos $P$, the coherence is equivalent to the compactness of ${\uparrow}x\cap{\uparrow}y$ for any $x, y\in P$, which generalized the key result in \cite{j2016}. In \cite{l2019}, Lu, Liu and Li presented the notion of weak sober space and some characterizations of weak sobriety were studied in \cite{l2021}. In \cite{c2023}, Chu and Li proposed the definition of $d^{\ast}$-spaces, which are strictly weaker than strong d-spaces. Moreover, some basic properties and characterizations of $d^{\ast}$-spaces are provided.
Inspired by these works, the main objective of this paper is to introduce a new class of topological spaces named $S^{\ast}$-well-filtered spaces, which are strictly weaker than strongly well-filtered spaces. Meanwhile, we investigate some of their basic properties and establish specific relationships among $T_{2}$-spaces, $S^{\ast}$-well-filtered spaces, $d^{\ast}$-spaces and weak well-filtered spaces, which will further enrich the theory of $T_{0}$ topological spaces.
The structures of this paper are organized as follows.

Section \ref{s2} provides some definitions and related results that are required for the remainder of the paper.

Section \ref{s3} reviews the concept and some important results of $d^{\ast}$-spaces. We provide several characterizations of $d^{\ast}$-spaces and show that the category $\mathbf{Top}_{d^{\ast}}$ of all $d^{\ast}$-spaces with continuous mappings is not reflective in
the category $\mathbf{Top}_{0}$ of all $T_{0}$-spaces with continuous mappings.

Section \ref{s4} proposes a new class of topological spaces named $S^{\ast}$-well-filtered spaces, which strictly contains all strongly well-filtered spaces. We establish specific connections among spaces lying between $T_{2}$-spaces and weak well-filtered spaces. It is demonstrated that for any dcpo $P$, the Scott space $\Sigma P$ is a $d^{\ast}$-space if and only if it is $S^{\ast}$-well-filtered. Moreover, we show that Johnstone's non-sober dcpo example is $S^{\ast}$-well-filtered yet it is not strongly well-filtered, thereby establishing a clear distinction between the two classes of dcpos.

Section \ref{s5} explores more properties of the $S^{\ast}$-well-filtered spaces. First, we show that the $S^{\ast}$-well-filteredness is closed-hereditary and saturated-hereditary. Second, we prove that if the product space of a family of $T_{0}$-space $X_{i}$ is an $S^{\ast}$-well-filtered space, then each space $X_{i}$ is an $S^{\ast}$-well-filtered space. Meanwhile, we give an example to show that the product of a family of $S^{\ast}$-well-filtered space is not $S^{\ast}$-well-filtered in general. Furthermore, we verify that if the function space $\mathrm{TOP}(X,Y)$ equipped with the Isbell topology is an $S^{\ast}$-well-filtered space for some nonempty space $X$, then $Y$ is an $S^{\ast}$-well-filtered space. However, the converse does not hold.

Section \ref{s6} discusses the $S^{\ast}$-well-filteredness of Smyth power spaces.
\section{Preliminaries}\label{s2}
In this section, we recall some fundamental concepts and results concerned to be used in this paper. For further details, we refer to \cite{g2003,g2013,a1994}.

Let $(P,\leq)$ be a poset. Given any subset $X$ of $P$, we denote that ${\downarrow} X=\{y\in P~|~y\leq x~\mathrm{for~some}~x\in X\}$ and ${\uparrow} X=\{y\in P~|~x\leq y~\mathrm{for~some}~x\in X\}$. For any element $x\in P$, we write ${\downarrow}x$ for ${\downarrow}\{x\}$ and ${\uparrow}x$ for ${\uparrow}\{x\}$. A subset $A$ of $P$ is called an upper set (resp., a lower set) if $A={\uparrow}A$ (resp., $A={\downarrow}A$). The poset $P$ is called a sup-complete poset if every nonempty subset $A$ has a supremum ${\bigvee}A$ in $P$. Let $P^{(<\omega)}=\{F\subseteq P\mid F~\mathrm{is~a~nonempty~finite ~subset}\}$ and $\mathbf{Fin}~P=\{{\uparrow}F\mid F\in P^{(<\omega)}\}$. The set of all natural numbers is denoted by $\mathbb{N}$ and let $\mathbb{N}^{+}=\mathbb{N}{\setminus}\{0\}$.

A nonempty subset $D$ of $P$ is called directed (resp., filtered) if for any $x, y \in D$, there is a $z\in D$ such that $x\leq z$ (resp., $z\leq x$) and $y\leq z$ (resp., $z\leq y$). The poset $P$ is a directed complete poset (dcpo, for short) if every directed subset of $P$ has a supremum. The set of all directed sets of $P$ is denoted by $\mathcal {D}(P)$.

For two elements $x, y \in P$, $x$ is way below $y$, denoted by $x\ll y$, if for all directed subsets $E\subseteq P$ for which ${\bigvee}E$ exists, the relation $y\leq {\bigvee}E$ always implies that there exists $e\in E$ with $x\leq e$. For any $x\in P$, if $x\ll x$, then $x$ is called a compact element of $P$. The set of compact elements of $P$ is denoted by $K(P)$. A poset $P$ is called algebraic if for any $x\in P$, $x=\bigvee({\downarrow}x \cap K(P))$, i.e., for all $x\in P$ the set ${\downarrow}x\cap K(P)$ is directed and $x$ is its supremum.
A poset $P$ is called a continuous poset if for every element $a\in P$, the set $\{x\in P \mid x\ll a\}$ is a directed set and
$a=\bigvee\{x\in P \mid x\ll a\}.$ A continuous dcpo is called a domain.

A subset $U$ of a poset $P$ is Scott open if (i) $U ={\uparrow}U$, and (ii) for any directed subset $D$ of $P$ with ${\bigvee}D$
existing,${\bigvee}D\in U$ implies $D\cap U\neq\emptyset$. All Scott open subsets of $P$ form a topology, which is called
the Scott topology on $P$ and denoted by $\sigma(P)$. The space $\Sigma P = (P,\sigma(P))$ is called the Scott space of $P$. A subset $A$ of $P$ is Scott closed if (i) $A={\downarrow}A$, and (ii) for any directed subset $D$ of $P$, $D\subseteq A$ implies ${\bigvee}D\in A$ if ${\bigvee}D$ exists.

In this paper, all topological spaces are assumed to be $T_{0}$. For any topological space $X$, we denote by $cl(Y)$ the closure of a subset $Y$ of $X$. The specialization order $\leq_{\tau}$ on $(X,\tau)$ is defined by $x\leq_{\tau} y$ iff $x\in cl_{\tau}(\{y\})$. Clearly, $cl_{\tau}(\{y\})={\downarrow}_{\tau}y$. Let
$\mathcal {O}(X)$ (resp., $\Gamma(X)$) be the set of all open subsets (resp., closed subsets) of $X$.

For any subset $Y$ of a space $X$, the saturation of $Y$, denoted by $\mathrm{sat}(Y)$, is defined as $$\mathrm{sat}(Y)=\bigcap\{U\in \mathcal {O}(X)~|~Y\subseteq U\},$$ or equivalently, $\mathrm{sat}(Y)={\uparrow}Y$ with respect to the specialization order. A subset $Y$ of $X$ is called saturated if $Y=\mathrm{sat}(Y)$.

For a topological space $X$, let $K(X)$ be the poset of all nonempty compact saturated subsets of $X$ endowed with the Smyth order, i.e., for $K_{1}, K_{2}\in K(X)$, $K_{1}\leq K_{2}$ iff $K_{2}\subseteq K_{1}$.
\begin{defn}\label{d1}(\cite{w1967}) A topological space is said to be KC if every compact (not necessarily $T_{2}$) subset is closed.
\end{defn}
It is easy to see that every $T_{2}$ space is KC and every KC space is $T_{1}$.
\begin{defn}\label{d2}(\cite{g2003, g2013}) Let $X$ be a topological space.
\begin{enumerate}[(1)]
\item A nonempty subset $A$ of $X$ is irreducible if for any $B, C\in \Gamma(X)$, $A\subseteq B\cup C$ implies $A\subseteq B$ or $A\subseteq C$. Denote by $Irr_{c}(X)$ the set of all irreducible closed set of $X$. A topological space $X$ is called sober, if for every $A\in Irr_{c}(X)$, there is a unique point $x\in X$ such that $A=cl\{x\}$.
\item A topological space $X$ is called well-filtered if for any filtered family $\{K_{i}\mid i\in I\}\subseteq K(X)$ and any open set $U$, $\bigcap_{i\in I} K_{i}\subseteq U$ implies $K_{i}\subseteq U$ for some $i\in I$.
\end{enumerate}
\end{defn}
\begin{defn}\label{d3}(\cite{l2017})
A topological space $X$ is called weak well-filtered if for any filtered family $\{K_{i}\mid i\in I\}\subseteq K(X)$ and $U\in \mathcal {O}(X){\setminus}\{\emptyset\}$, $\bigcap_{i\in I}K_{i}\subseteq U$ implies $K_{i}\subseteq U$ for some $i\in I$.
\end{defn}
\begin{defn}\label{d5}(\cite{x2020}) A $T_{0}$-space $X$ is called a strong d-space if for any $D\in \mathcal {D}(X)$, $x\in X$ and $U\in \mathcal {O}(X)$, $\bigcap_{d\in D}{\uparrow}d\cap {\uparrow}x\subseteq U$ implies ${\uparrow}d\cap{\uparrow}x\subseteq U$ for some $d\in D.$ The category
of all strong d-spaces with continuous mappings is denoted by $\mathbf{S}$-$\mathbf{Top}_{d}$.
\end{defn}
\begin{defn}\label{d6}(\cite{x2026}) A $T_{0}$-space $X$ is called strongly well-filtered if for any filtered family $\{K_{i}\mid i\in I\}\subseteq K(X)$, $G\in K(X)$ and $U\in \mathcal {O}(X)$, $\bigcap_{i\in I}K_{i}\cap G\subseteq U$ implies $K_{i}\cap G\subseteq U$ for some $i\in I$. The category of all strongly well-filtered spaces with continuous mappings is denoted by $\mathbf{S}$-$\mathbf{Top}_{w}$.
\end{defn}
In \cite{x2026}, Xu proved that every strongly well-filtered space is well-filtered and every strongly well-filtered space is a strong d-space.
\begin{defn}\label{d17}(\cite{g2003, w1981})
A space $X$ is called a $d$-space (or monotone convergence space) if $X$ (endowed with the specialization order) is a dcpo and $\mathcal {O}(X)\subseteq \sigma(X)$.
\end{defn}
Let $\mathbf{Top}_{0}$ be the category of all $T_{0}$-spaces with continuous mappings. In \cite{k2009}, Keimel and Lawson proved that a full subcategory $\mathbf{K}$ is reflective in $\mathbf{Top}_{0}$ if it satisfies the
following four conditions:
\begin{enumerate}[($K$1)]
  \item $\mathbf{K}$ contains all sober spaces;
  \item If $X\in \mathbf{K}$ and $Y$ is homeomorphic to $X$, then $Y\in \mathbf{K}$;
  \item If $\{X_{i}\mid i\in I\}\subseteq\mathbf{K}$ is a family of subspaces of a sober space, then the subspace $\bigcap_{i\in I}X_{i}\in\mathbf{K}$;
  \item If $f: X\rightarrow Y$ is a continuous map from a sober space $X$ to a sober space $Y$, then for any subspace $Z$ of $Y$, $Z\in \mathbf{K}$ implies that
$f^{-1}(Z)\in \mathbf{K}$.
\end{enumerate}

Let $X$ be a $T_{0}$-space. The b-topology associated with $X$ is the topology, which has the family $\{U \cap cl(\{x\})\mid x \in U\in O(X)\}$ as a base \cite{s1969}.
\begin{defn}\label{d18}(\cite{s2024}) For a subcategory $\mathbf{K}$ of $\mathbf{Top}_{0}$, we say that
\begin{enumerate}[(1)]
  \item $\mathbf{K}$ is productive if the product $\prod_{i\in I}X_{i}\in \mathbf{K}$ whenever $\{X_{i}\mid i\in I\}\subseteq \mathbf{K}$;
  \item  $\mathbf{K}$ is b-closed-hereditary, if $A\in \mathbf{K}$ whenever $A$ is a b-closed subspace of some $X\in \mathbf{K}$.
\end{enumerate}
\end{defn}
\begin{lem}\label{l20}(\cite{s2024})
 Let $\mathbf{K}$ be a full subcategory of $\mathbf{Top}_{0}$ with $\mathbf{K}\nsubseteq \mathbf{Top}_{1}$. Then $\mathbf{K}$ is reflective in $\mathbf{Top}_{0}$ if and only if $\mathbf{K}$ is productive and b-closed-hereditary.
\end{lem}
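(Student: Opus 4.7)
The plan is to prove the two implications separately, invoking the Keimel--Lawson criterion (K1)--(K4) for the nontrivial backward direction and exploiting standard categorical closure properties of reflective subcategories for the forward direction. The hypothesis $\mathbf{K}\nsubseteq\mathbf{Top}_{1}$ plays an essential role throughout: it guarantees the presence of a non-$T_{1}$ object in $\mathbf{K}$, from which the Sierpi\'nski space can be extracted, and it is precisely this point that blocks a purely $T_{1}$ counterexample to the characterization.

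For the forward direction, suppose $\mathbf{K}$ is reflective in $\mathbf{Top}_{0}$. Reflective subcategories are closed under any limits that exist in the ambient category; since products in $\mathbf{Top}_{0}$ are just topological products (which remain $T_{0}$), any product of $\mathbf{K}$-objects is isomorphic to its own $\mathbf{K}$-reflection and thus lies in $\mathbf{K}$, giving productivity. For b-closed-hereditariness, one shows that the inclusion of a b-closed subspace can be realized (up to homeomorphism) as an equalizer of continuous maps into a sober space, and equalizers in $\mathbf{Top}_{0}$ involving only $\mathbf{K}$-objects again land in $\mathbf{K}$; hence b-closed subspaces of $\mathbf{K}$-objects belong to $\mathbf{K}$.

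For the backward direction, I would verify each of (K1)--(K4). Condition (K2) is immediate from fullness. For (K1), the hypothesis $\mathbf{K}\nsubseteq\mathbf{Top}_{1}$ produces some $X\in\mathbf{K}$ with two distinct comparable points, and a careful b-closed-subspace extraction gives a copy of the Sierpi\'nski space $\mathbb{S}$ in $\mathbf{K}$. Productivity then delivers $\mathbb{S}^{I}\in\mathbf{K}$ for every set $I$. Since every sober space embeds as a b-closed subspace of some $\mathbb{S}^{I}$ via its canonical evaluation map $y\mapsto(\chi_{U}(y))_{U\in\mathcal{O}(Y)}$, all sober spaces lie in $\mathbf{K}$. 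For (K3), any intersection $\bigcap_{i\in I}X_{i}$ of subspaces of a sober space $Y$ is homeomorphic to a b-closed subspace of $\prod_{i\in I}X_{i}$ via the diagonal map, so productivity and b-closed-hereditariness place it in $\mathbf{K}$. For (K4), the preimage $f^{-1}(Z)$ is the pullback of $Z\hookrightarrow Y\leftarrow X$ and is realized as a b-closed subspace of $X\times Z$, concluding the verification.

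The principal obstacle is checking that the various canonical embeddings---sober space into a power of $\mathbb{S}$, intersection into a product via the diagonal, preimage into a product---actually land as b-closed subspaces, rather than merely as subspaces closed in the original topology. This demands careful bookkeeping of the basic b-open sets $U\cap cl(\{x\})$ under products and subspaces, and it is here that the b-topology's finer structure must be exploited. A secondary delicate point is upgrading the mere existence of one non-$T_{1}$ object in $\mathbf{K}$ to the presence of $\mathbb{S}$ itself, which may require an additional b-closed-subspace argument applied to a two-point subset of the original non-$T_{1}$ witness.
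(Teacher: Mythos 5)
The paper does not prove this lemma---it is quoted verbatim from \cite{s2024}---so there is no internal argument to compare yours with; I assess the proposal on its own terms. Your backward direction is a workable plan, and the b-closedness verifications you flag as the ``principal obstacle'' do go through, all for one uniform reason: every open set and every closed set of a $T_{0}$-space is b-clopen (a closed set $C$ equals $\bigcup_{x\in C}(X\cap cl(\{x\}))$, a union of basic b-open sets), so every set of the form (open)\,$\cap$\,(closed) is b-open. Hence the diagonal of $\prod_{i}X_{i}$ and the graph of a continuous map are b-closed (two distinct points of a $T_{0}$-space are separated by an open set containing exactly one of them, which yields an open-times-closed box missing the diagonal), and the set of completely prime filters in $\mathbb{S}^{\mathcal{O}(Y)}$ is b-closed because each defining condition excludes such a set. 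Your ``secondary delicate point'' is also fine: for $x\in cl(\{y\})$, $x\neq y$, the set $\{x,y\}=(cl(\{x\})\cap\mathrm{sat}(\{x\}))\cup(cl(\{y\})\cap\mathrm{sat}(\{y\}))$ is b-closed and carries the Sierpi\'nski topology. Two small corrections: (K2) is \emph{not} immediate from fullness (full does not imply replete, so closure under homeomorphism must be assumed or arranged), and in (K4) you need $X\in\mathbf{K}$ to form $X\times Z$, so (K1) must be established first.

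The genuine gap is in the forward direction, at b-closed-hereditariness. You assert that $A\hookrightarrow X$ is an equalizer of maps ``into a sober space'' and that ``equalizers involving only $\mathbf{K}$-objects land in $\mathbf{K}$''; but a reflective subcategory is closed only under limits of diagrams lying \emph{in} $\mathbf{K}$, so the codomain of the parallel pair must itself be shown to be in $\mathbf{K}$, and you neither exhibit that codomain nor verify this. Tellingly, your forward argument never uses $\mathbf{K}\nsubseteq\mathbf{Top}_{1}$, although the implication fails without it: the full subcategory of one-point spaces is reflective and productive but not b-closed-hereditary, since the empty subspace is b-closed. The hypothesis must enter exactly here: a non-$T_{1}$ object $X\in\mathbf{K}$ retracts onto a copy of the Sierpi\'nski space $\mathbb{S}$ (take $x\in cl(\{y\})$ with $x\neq y$, let $r=\chi_{U}$ for an open $U$ with $y\in U$ and $x\notin U$, and $s(0)=x$, $s(1)=y$), and reflective full subcategories are closed under retracts because a retract is the equalizer of $\mathrm{id}_{X}$ and $s\circ r$; hence $\mathbb{S}\in\mathbf{K}$ and, by productivity (already obtained from reflectivity), $\mathbb{S}^{\Lambda}\in\mathbf{K}$. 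Then each subbasic b-closed set $X\setminus(U\cap cl(\{x\}))=\{z\mid\chi_{U}(z)\leq\chi_{V}(z)\}$, where $V=X\setminus cl(\{x\})$, is the equalizer of the pair $\chi_{U}\wedge\chi_{V},\,\chi_{U}\colon X\to\mathbb{S}$, so a general b-closed $A$ is the equalizer of a pair $X\rightrightarrows\mathbb{S}^{\Lambda}$ with both objects in $\mathbf{K}$. Without some argument of this kind the forward direction is not proved.
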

\section{The $d^{\ast}$-spaces}\label{s3}
The concept of $d^{\ast}$-spaces was introduced by Chu and Li in \cite{c2023}. For further details, we refer the readers to \cite{c2023}.
\begin{defn}\label{d11}(\cite{c2023})
A $T_{0}$-space $X$ is called a $d^{\ast}$-space if for any $D\in \mathcal {D}(X)$, $x\in X$ and $U\in \mathcal {O}(X){\setminus}\{\emptyset\}$, $\bigcap_{d\in D}{\uparrow}d\cap {\uparrow}x\subseteq U$ implies ${\uparrow}d\cap{\uparrow}x\subseteq U$ for some $d\in D.$ The category
of all $d^{\ast}$-spaces with continuous mappings is denoted by $\mathbf{Top}_{d^{\ast}}$.
\end{defn}

It is straightforward to verify the following results.
\begin{rem}\label{r1}
\begin{enumerate}[(1)]
 \item Every strong d-space is a $d^{\ast}$-space.
 \item Every $T_{1}$-space is a $d^{\ast}$-space.
 \item Every $KC$ space is a $d^{\ast}$-space.
 \item Every sup-complete poset (especially, every complete lattice) with Scott topology is a $d^{\ast}$-space.
 \end{enumerate}
\end{rem}
Similar to the characterization of strong d-spaces given in \cite{x2020}, we have the following results.
\begin{prop}\label{p1} For a $T_{0}$-space $X$, the following two conditions are equivalent:
\begin{enumerate}[(1)]
  \item\label{p1i1} $X$ is a $d^{\ast}$-space.
  \item\label{p1i2} For any $D\in \mathcal {D}(X)$, ${\uparrow}F\in \mathbf{Fin}~X$ and $U\in \mathcal {O}(X){\setminus}\{\emptyset\}$, $\bigcap_{d\in D}{\uparrow}d\cap {\uparrow}F\subseteq U$ implies ${\uparrow}d\cap{\uparrow}F\subseteq U$ for some $d\in D.$
\end{enumerate}
\end{prop}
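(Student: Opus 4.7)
The plan is to prove (2)$\Rightarrow$(1) trivially (take $F=\{x\}$) and to prove (1)$\Rightarrow$(2) by decomposing ${\uparrow}F$ as a finite union of principal up-sets and then using the directedness of $D$ to consolidate the finitely many witnesses produced by applying the one-point hypothesis.

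\textbf{Details.} For (2)$\Rightarrow$(1), given $D\in\mathcal{D}(X)$, $x\in X$, and $U\in\mathcal{O}(X)\setminus\{\emptyset\}$ with $\bigcap_{d\in D}{\uparrow}d\cap{\uparrow}x\subseteq U$, just take $F=\{x\}$ so that ${\uparrow}F={\uparrow}x\in\mathbf{Fin}~X$, and apply (2). For (1)$\Rightarrow$(2), write $F=\{x_{1},\ldots,x_{n}\}$, so that
\[
{\uparrow}F=\bigcup_{i=1}^{n}{\uparrow}x_{i}\quad\text{and}\quad\bigcap_{d\in D}{\uparrow}d\cap{\uparrow}F=\bigcup_{i=1}^{n}\Bigl(\bigcap_{d\in D}{\uparrow}d\cap{\uparrow}x_{i}\Bigr).
\]
From $\bigcap_{d\in D}{\uparrow}d\cap{\uparrow}F\subseteq U$ we deduce that $\bigcap_{d\in D}{\uparrow}d\cap{\uparrow}x_{i}\subseteq U$ for every $i\in\{1,\ldots,n\}$. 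Since $U\neq\emptyset$, applying the one-point hypothesis (1) to each $x_{i}$ yields some $d_{i}\in D$ with ${\uparrow}d_{i}\cap{\uparrow}x_{i}\subseteq U$.

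\textbf{Consolidation step.} Because $D$ is directed and $\{d_{1},\ldots,d_{n}\}$ is finite, a routine induction on $n$ produces $d\in D$ with $d_{i}\leq d$ for all $i$, hence ${\uparrow}d\subseteq{\uparrow}d_{i}$ and therefore ${\uparrow}d\cap{\uparrow}x_{i}\subseteq{\uparrow}d_{i}\cap{\uparrow}x_{i}\subseteq U$ for each $i$. Taking the union gives ${\uparrow}d\cap{\uparrow}F=\bigcup_{i=1}^{n}({\uparrow}d\cap{\uparrow}x_{i})\subseteq U$, as required.

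\textbf{Where the difficulty would live.} There is essentially no obstacle: the argument is a clean directed-union bookkeeping, parallel to the analogous upgrade from singletons to finite sets for strong $d$-spaces in \cite{x2020}. The one point that must not be overlooked is that the hypothesis (1) requires $U\neq\emptyset$, which is given in (2); so the nonemptiness assumption on $U$ is preserved exactly and the induction from singletons to finite sets goes through without needing to enlarge the class of test open sets.
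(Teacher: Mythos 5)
Your proof is correct and follows essentially the same route as the paper's: decompose ${\uparrow}F$ into finitely many principal up-sets, apply the $d^{\ast}$-space condition to each $x_{i}$, and use directedness of $D$ to find a single witness $d$ above all the $d_{i}$. The only difference is that you spell out the consolidation step that the paper compresses into one sentence.
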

\begin{proof}
\begin{description}
\item $(\ref{p1i1})\Rightarrow(\ref{p1i2})$ Suppose that $D\in \mathcal {D}(X)$, ${\uparrow}F\in \mathbf{Fin}~X$ and $U\in \mathcal {O}(X){\setminus}\{\emptyset\}$ satisfying $\bigcap_{d\in D}{\uparrow}d\cap {\uparrow}F\subseteq U$. Let $F=\{x_{1}, x_{2},..., x_{n}\}$. Then $\bigcap_{d\in D}{\uparrow}d\cap {\uparrow}F=\bigcup_{i=1}^{n}(\bigcap_{d\in D}{\uparrow}d\cap {\uparrow}x_{i})\subseteq U$. Thus, for any $x_{i}\in F$, $\bigcap_{d\in D}{\uparrow}d\cap {\uparrow}x_{i}\subseteq U$, and there exists $d_{x_{i}}\in D$ such that ${\uparrow}d_{x_{i}}\cap{\uparrow}x_{i}\subseteq U$ since $X$ is a $d^{\ast}$-space. As $D$ is directed and $F$ is finite, there exists $d_{0}\in D$ such that $\bigcup_{i=1}^{n}({\uparrow}d_{0}\cap{\uparrow}x_{i})\subseteq U$. Thus, we conclude that ${\uparrow}d_{0}\cap\bigcup_{i=1}^{n}{\uparrow}x_{i}={\uparrow}d_{0}\cap{\uparrow}F\subseteq U$ for some $d_{0}\in D$.
 \end{description}
\item $(\ref{p1i2})\Rightarrow(\ref{p1i1})$ Trivial.
\end{proof}
\begin{lem}\label{l12}(\cite{c2023}) For a dcpo $P$ , the following two conditions are equivalent:
\begin{enumerate}[(1)]
  \item $\Sigma P$ is a $d^{\ast}$-space.
  \item For any $C\in \Gamma(\Sigma P){\setminus}\{P\}$ and $x\in P$, ${\downarrow}({\uparrow}x\cap C)\in \Gamma(\Sigma P)$.
\end{enumerate}
\end{lem}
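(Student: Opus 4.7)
The plan is to translate between the two conditions by taking complements: a proper Scott closed set $C$ corresponds to a nonempty Scott open set $U = P \setminus C$, and the crucial bridge is the identity $\bigcap_{d \in D} {\uparrow}d = {\uparrow}\bigvee D$ for a directed set $D$ in a dcpo (any common upper bound of $D$ lies above the supremum).

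For the direction $(2) \Rightarrow (1)$, I would suppose $D \in \mathcal{D}(\Sigma P)$, $x \in P$, and $U \in \sigma(P) \setminus \{\emptyset\}$ satisfy $\bigcap_{d \in D} {\uparrow}d \cap {\uparrow}x \subseteq U$, and argue by contradiction. Setting $C = P \setminus U$, which is a proper Scott closed set, the assumption that no $d_0 \in D$ gives ${\uparrow}d_0 \cap {\uparrow}x \subseteq U$ means each $d \in D$ admits some $y_d \in {\uparrow}d \cap {\uparrow}x \cap C$, so $D \subseteq {\downarrow}({\uparrow}x \cap C)$. By hypothesis (2), this set is Scott closed, so $s = \bigvee D$ lies in it and there is $y \in {\uparrow}x \cap C$ with $s \leq y$. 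Then $y$ is an upper bound of $D$, hence $y \in \bigcap_{d \in D}{\uparrow}d \cap {\uparrow}x \subseteq U$, contradicting $y \in C$.

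For $(1) \Rightarrow (2)$, let $A = {\downarrow}({\uparrow}x \cap C)$; this is clearly a lower set, so I only need Scott closure under directed suprema. Taking a directed $D \subseteq A$ with $s = \bigvee D$ and supposing $s \notin A$ gives ${\uparrow}s \cap {\uparrow}x \cap C = \emptyset$. Using the bridge identity, this rewrites as $\bigcap_{d \in D}{\uparrow}d \cap {\uparrow}x \subseteq U$ where $U = P \setminus C$ is a nonempty Scott open set (nonempty precisely because $C \neq P$, which is exactly where the asymmetry between $d^{\ast}$-spaces and strong $d$-spaces enters). The $d^{\ast}$-condition then yields $d_0 \in D$ with ${\uparrow}d_0 \cap {\uparrow}x \cap C = \emptyset$, contradicting $d_0 \in A$ (which provides some $y \in {\uparrow}x \cap C$ above $d_0$).

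The main conceptual obstacle is recognizing that the hypothesis $U \neq \emptyset$ in Definition~\ref{d11} corresponds exactly to the restriction $C \neq P$ in condition (2); once this matching is made, together with the identity $\bigcap_{d \in D}{\uparrow}d = {\uparrow}\bigvee D$, both directions become symmetric contradiction arguments. Everything else is bookkeeping.
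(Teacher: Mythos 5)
Your argument is correct: both directions are the standard complementation argument hinging on the identity $\bigcap_{d\in D}{\uparrow}d={\uparrow}\bigvee D$ in a dcpo, and you correctly identify that the hypothesis $U\neq\emptyset$ in Definition~\ref{d11} is exactly what makes $C=P\setminus U$ a \emph{proper} Scott closed set, so the restriction $C\neq P$ in condition (2) is used in precisely the right place. The paper itself states this lemma as a quoted result from \cite{c2023} without reproducing a proof, and your proof is the natural (and presumably the original) one.
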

\begin{lem}\label{l14}(\cite{h2013}) Let $P$ be a dcpo, $U$ a Scott-open set of $P$ and $\{{\uparrow}F_{i}\mid i\in I\}\subseteq \mathbf{Fin}~P$ a filtered family with $\bigcap_{i\in I}{\uparrow}F_{i}\subseteq U.$ Then ${\uparrow}F_{i}\subseteq U$ some $i\in I$.
\end{lem}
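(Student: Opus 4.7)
The plan is to reduce the claim to the classical order-theoretic Rudin Lemma by a contradiction argument that exploits the Scott-closedness of $P\setminus U$. I would argue by contradiction and assume that ${\uparrow}F_{i}\not\subseteq U$ for every $i\in I$. Since $U$ is an upper set, ${\uparrow}F_{i}\subseteq U$ holds if and only if $F_{i}\subseteq U$; hence each $G_{i}:=F_{i}\setminus U$ is a nonempty finite subset of $P$.

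Next I would check that $\{G_{i}\mid i\in I\}$ inherits filteredness in the Smyth preorder on $P^{(<\omega)}$. Given $i,j\in I$, pick $k\in I$ with ${\uparrow}F_{k}\subseteq {\uparrow}F_{i}\cap {\uparrow}F_{j}$. For any $y\in G_{k}=F_{k}\setminus U$, choose $x\in F_{i}$ with $x\leq y$; if $x\in U$, then $y\in U$ by the upper-set property of $U$, a contradiction. Hence $x\in G_{i}$, so $y\in{\uparrow}G_{i}$, and symmetrically $y\in{\uparrow}G_{j}$. Thus ${\uparrow}G_{k}\subseteq {\uparrow}G_{i}\cap {\uparrow}G_{j}$, and the family $\{G_{i}\}$ is filtered in the Smyth sense.

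Now I would invoke the classical Rudin Lemma on $\{G_{i}\mid i\in I\}$ to obtain a directed subset $D\subseteq \bigcup_{i\in I}G_{i}\subseteq P\setminus U$ such that $D\cap G_{i}\neq\emptyset$ for every $i\in I$. Because $P$ is a dcpo, $\bigvee D$ exists, and because $P\setminus U$ is Scott-closed, $\bigvee D\in P\setminus U$. On the other hand, for each $i\in I$ picking $d_{i}\in D\cap G_{i}\subseteq F_{i}$ gives $d_{i}\leq\bigvee D$, so by the upper-set property $\bigvee D\in{\uparrow}F_{i}$. Consequently $\bigvee D\in \bigcap_{i\in I}{\uparrow}F_{i}\subseteq U$, contradicting $\bigvee D\notin U$.

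The principal obstacle I anticipate is the bookkeeping required to verify that the trimmed family $\{G_{i}\}$ is still filtered in the Smyth preorder; this is short but genuinely needs the fact that $U$ is an upper set, not merely a subset. Once that verification is in place, the classical Rudin Lemma combined with Scott-closedness of $P\setminus U$ closes the argument with essentially no further work.
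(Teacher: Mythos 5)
The paper states this lemma without proof, citing Heckmann and Keimel, and your argument is the standard derivation of exactly this fact: trim each $F_{i}$ to $G_{i}=F_{i}\setminus U$, check that the trimmed family is still Smyth-filtered (which, as you note, uses that $U$ is an upper set), apply the order-theoretic Rudin Lemma, and then use directed completeness of $P$ together with Scott-closedness of $P\setminus U$ to produce a point of $\bigcap_{i\in I}{\uparrow}F_{i}$ outside $U$. The proof is correct and complete; I see no gaps.
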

\begin{prop}\label{p18}
For a dcpo $P$, the following two conditions are equivalent:
\begin{enumerate}[(1)]
  \item $\Sigma P$ is a $d^{\ast}$-space.
  \item For any filtered family $\{{\uparrow}F_{d}\mid d\in D\}\subseteq \mathbf{Fin}~P$, ${\uparrow}F\in \mathbf{Fin}~P$ and $U\in \sigma(P){\setminus}\{\emptyset\}$, $\bigcap_{d\in D}{\uparrow}F_{d}\cap{\uparrow}F\subseteq U$ implies ${\uparrow}F_{d}\cap{\uparrow}F\subseteq U$ some $d\in D$.
\end{enumerate}
\end{prop}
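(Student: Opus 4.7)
The plan is to prove (2)$\Rightarrow$(1) as a direct specialization, and to establish the substantive direction (1)$\Rightarrow$(2) by combining the characterization in Lemma \ref{l12} with the topological Rudin Lemma (Lemma \ref{l14}).

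For (2)$\Rightarrow$(1), I would take any $D\in\mathcal{D}(\Sigma P)$, $x\in P$ and nonempty Scott-open $U$ with $\bigcap_{d\in D}{\uparrow}d\cap{\uparrow}x\subseteq U$. Since $D$ is directed, the family $\{{\uparrow}d\mid d\in D\}$ is filtered in $\mathbf{Fin}~P$, and ${\uparrow}\{x\}\in\mathbf{Fin}~P$, so (2) applied with $F_{d}=\{d\}$ and $F=\{x\}$ immediately yields the single-point $d^{\ast}$ condition.

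For (1)$\Rightarrow$(2), fix a filtered family $\{{\uparrow}F_{d}\mid d\in D\}\subseteq\mathbf{Fin}~P$, ${\uparrow}F\in\mathbf{Fin}~P$ with $F=\{x_{1},\ldots,x_{n}\}$, and $U\in\sigma(P)\setminus\{\emptyset\}$ satisfying $\bigcap_{d\in D}{\uparrow}F_{d}\cap{\uparrow}F\subseteq U$. Since ${\uparrow}F=\bigcup_{i=1}^{n}{\uparrow}x_{i}$, the hypothesis splits as $\bigcap_{d\in D}{\uparrow}F_{d}\cap{\uparrow}x_{i}\subseteq U$ for each $i$. Let $C=P\setminus U$, a Scott-closed set with $C\neq P$. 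By Lemma \ref{l12} applied to (1), the set ${\downarrow}({\uparrow}x_{i}\cap C)$ is Scott-closed, so $V_{i}=P\setminus{\downarrow}({\uparrow}x_{i}\cap C)$ is Scott-open. The crucial step is verifying that $\bigcap_{d\in D}{\uparrow}F_{d}\subseteq V_{i}$: any element $y$ lying in both $\bigcap_{d}{\uparrow}F_{d}$ and ${\downarrow}({\uparrow}x_{i}\cap C)$ would satisfy $y\leq z$ for some $z\in{\uparrow}x_{i}\cap C$, and since $\bigcap_{d}{\uparrow}F_{d}$ is an upper set we would obtain $z\in\bigcap_{d}{\uparrow}F_{d}\cap{\uparrow}x_{i}\cap C$, contradicting $\bigcap_{d}{\uparrow}F_{d}\cap{\uparrow}x_{i}\subseteq U$.

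With this inclusion in hand, Lemma \ref{l14} produces some $d_{i}\in D$ with ${\uparrow}F_{d_{i}}\subseteq V_{i}$, which forces ${\uparrow}F_{d_{i}}\cap{\uparrow}x_{i}\subseteq U$. Since $\{{\uparrow}F_{d}\}_{d\in D}$ is filtered and $F$ is finite, I can then pick $d_{0}\in D$ with ${\uparrow}F_{d_{0}}\subseteq\bigcap_{i=1}^{n}{\uparrow}F_{d_{i}}$, so that ${\uparrow}F_{d_{0}}\cap{\uparrow}F=\bigcup_{i=1}^{n}({\uparrow}F_{d_{0}}\cap{\uparrow}x_{i})\subseteq U$. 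The main obstacle is the passage from an intersection with the non-open upper set ${\uparrow}x_{i}$ to an inclusion into a genuinely Scott-open set $V_{i}$; this is precisely the content of Lemma \ref{l12}, without which Lemma \ref{l14} could not be applied.
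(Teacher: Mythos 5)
Your proof is correct and follows essentially the same route as the paper's: both directions rest on Lemma \ref{l12} (Scott-closedness of ${\downarrow}({\uparrow}x\cap C)$ for $C\neq P$) combined with the Rudin-type Lemma \ref{l14}. The only cosmetic difference is that you apply these lemmas to each ${\uparrow}x_{i}$ separately and recombine via filteredness of $\{{\uparrow}F_{d}\}_{d\in D}$, whereas the paper applies them once to ${\downarrow}({\uparrow}F\cap(P\setminus U))$, implicitly using that this set is a finite union of the closed sets supplied by Lemma \ref{l12}.
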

\begin{proof}
\begin{description}
\item $(\ref{p1i1})\Rightarrow(\ref{p1i2})$ Assume that $\{{\uparrow}F_{d}\mid d\in D\}\subseteq \mathbf{Fin}~P$, ${\uparrow}F\in \mathbf{Fin}~P$ and $U\in \sigma(P){\setminus}\{\emptyset\}$ satisfying $\bigcap_{d\in D}{\uparrow}F_{d}\cap{\uparrow}F\subseteq U$. Then $\bigcap_{d\in D}{\uparrow}F_{d}\cap{\downarrow}({\uparrow}F\cap(X\setminus U))=\emptyset$. Therefore, $\bigcap_{d\in D}{\uparrow}F_{d}\subseteq X\setminus{\downarrow}({\uparrow}F\cap(X\setminus U))$ and $X\setminus{\downarrow}({\uparrow}F\cap(X\setminus U))\in \sigma(P){\setminus}\{\emptyset\}$ by Lemma \ref{l12}. It follows from Lemma \ref{l14} that ${\uparrow}F_{d}\subseteq X\setminus{\downarrow}({\uparrow}F\cap(X\setminus U))$ for some $d\in D$. So ${\uparrow}F_{d}\cap{\uparrow}F\subseteq U$ for some $d\in D$.
 \end{description}
\item $(\ref{p1i2})\Rightarrow(\ref{p1i1})$ Trivial.
\end{proof}
One of the most fundamental properties of $d^{\ast}$-spaces is the following result.
\begin{lem}\label{l1}(\cite{c2023}) A retract of a $d^{\ast}$-space is a $d^{\ast}$-space.
\end{lem}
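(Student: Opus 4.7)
The plan is to pull the witnessing data from $Y$ up to $X$ via the section, apply the $d^{\ast}$-property in $X$, and then push the conclusion back down via the retraction. Recall that $Y$ being a retract of $X$ means there exist continuous maps $s\colon Y\to X$ and $r\colon X\to Y$ with $r\circ s=\mathrm{id}_{Y}$, both of which are automatically monotone with respect to the specialization orders.

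Fix the data for the $d^{\ast}$-condition on $Y$: assume $D\in\mathcal{D}(Y)$, $y\in Y$ and $U\in\mathcal{O}(Y)\setminus\{\emptyset\}$ with $\bigcap_{d\in D}{\uparrow}d\cap{\uparrow}y\subseteq U$. The first step is the routine bookkeeping that $s(D)=\{s(d):d\in D\}$ is a directed subset of $X$ (by monotonicity of $s$), and that $r^{-1}(U)$ is nonempty and open in $X$ (pick any $u\in U$; then $r(s(u))=u\in U$ gives $s(u)\in r^{-1}(U)$).

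The heart of the argument is the inclusion $\bigcap_{d\in D}{\uparrow}s(d)\cap{\uparrow}s(y)\subseteq r^{-1}(U)$. For any $z$ in the left-hand side, monotonicity of $r$ gives $r(z)\geq r(s(d))=d$ for every $d\in D$ and $r(z)\geq r(s(y))=y$, so $r(z)\in\bigcap_{d\in D}{\uparrow}d\cap{\uparrow}y\subseteq U$, whence $z\in r^{-1}(U)$. Applying the $d^{\ast}$-property of $X$ to $s(D)$, $s(y)$ and $r^{-1}(U)$ now yields some $d_{0}\in D$ with ${\uparrow}s(d_{0})\cap{\uparrow}s(y)\subseteq r^{-1}(U)$.

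To finish, I would push this back to $Y$: for any $w\in{\uparrow}d_{0}\cap{\uparrow}y$, monotonicity of $s$ gives $s(w)\in{\uparrow}s(d_{0})\cap{\uparrow}s(y)\subseteq r^{-1}(U)$, hence $w=r(s(w))\in U$, establishing ${\uparrow}d_{0}\cap{\uparrow}y\subseteq U$. The only subtle point, and the reason the usual monotone/continuous-image argument has to be adapted rather than copied, is that Definition \ref{d11} requires the open set to be nonempty; this is precisely what the existence of the section $s$ guarantees for $r^{-1}(U)$, so no obstacle arises.
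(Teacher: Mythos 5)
Your proof is correct. The paper itself states this lemma as a citation to \cite{c2023} and gives no proof, so there is nothing to compare against directly; your argument is the standard retract transfer (push the directed set and the point up along the section $s$, the open set back along $r^{-1}$, verify the intersection inclusion via monotonicity of $r$, apply the $d^{\ast}$-property in $X$, and return via $s$ and $r\circ s=\mathrm{id}_Y$), and your observation that the section guarantees $r^{-1}(U)\neq\emptyset$ is exactly the one extra check needed beyond the classical d-space argument.
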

\begin{prop}\label{p2}
Let $\{X_{i}\mid i\in I \}$ be a family of $T_{0}$-space. If the product space $\prod _{i\in I}X_{i}$ is a $d^{\ast}$-space, then $X_{i}$ is a $d^{\ast}$-space for each $i\in I$.
\end{prop}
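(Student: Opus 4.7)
The plan is to deduce this from Lemma~\ref{l1}, which asserts that a retract of a $d^{\ast}$-space is a $d^{\ast}$-space. So the whole task reduces to exhibiting each factor $X_{i_{0}}$ as a retract of the product $\prod_{i\in I}X_{i}$.

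Fix $i_{0}\in I$. If the product is empty, then some $X_{i}$ is empty; in that case every factor is vacuously $T_{0}$ and the $d^{\ast}$-condition holds trivially, so there is nothing to prove. Otherwise, using the axiom of choice, pick a point $a_{j}\in X_{j}$ for every $j\in I\setminus\{i_{0}\}$, and define the map
\[
s:X_{i_{0}}\longrightarrow \prod_{i\in I}X_{i},\qquad s(x)_{i_{0}}=x,\quad s(x)_{j}=a_{j}\text{ for }j\neq i_{0}.
\]
Then $\pi_{j}\circ s$ is either the identity (when $j=i_{0}$) or a constant map (when $j\neq i_{0}$), hence continuous for every $j\in I$; by the universal property of the product topology, $s$ is continuous. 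The projection $\pi_{i_{0}}:\prod_{i\in I}X_{i}\to X_{i_{0}}$ is continuous, and clearly $\pi_{i_{0}}\circ s=\mathrm{id}_{X_{i_{0}}}$.

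Hence $X_{i_{0}}$ is a retract of $\prod_{i\in I}X_{i}$. Since $\prod_{i\in I}X_{i}$ is a $d^{\ast}$-space by hypothesis, Lemma~\ref{l1} yields that $X_{i_{0}}$ is a $d^{\ast}$-space. As $i_{0}\in I$ was arbitrary, the proof is complete.

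There is no real obstacle here: the argument is the standard retract trick used throughout domain theory for showing that hereditary properties pass from products to factors, and all the work has already been done in Lemma~\ref{l1}. The only mild care is the nonemptiness issue above, which is settled by the vacuous case.
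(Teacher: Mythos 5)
Your proof is correct and follows exactly the paper's route: the paper also deduces the result from Lemma~\ref{l1} by noting that each $X_{i}$ is a retract of the product, though it leaves the construction of the section implicit. Your explicit construction of $s$ and the remark on the empty-product case merely fill in details the paper omits.
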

\begin{proof}
Let $X=\prod _{i\in I}X_{i}$ be a $d^{\ast}$-space. For each $i\in I$, $X_{i}$ is a retract of $X$. It follows from Lemma \ref{l1} that $X_{i}$ is a $d^{\ast}$-space for each $i\in I$.
\end{proof}
Conversely, one naturally asks the question: is the product space of an arbitrary family of $d^{\ast}$-spaces again a $d^{\ast}$-space?

In \cite{c2023}, the authors gave an example to reveal that the product space of two $d^{\ast}$-spaces is not a $d^{\ast}$-space. Hence, the product space of an arbitrary family of $d^{\ast}$-spaces is not a $d^{\ast}$-space.

In \cite{l2023}, we know that the category $\mathbf{S}$-$\mathbf{Top}_{d}$ is not a reflective subcategory of $\mathbf{Top}_{0}$. By Lemma \ref{l20}, we can immediately get the following result.
\begin{thm}\label{p3}
The category $\mathbf{Top}_{d^{\ast}}$ is not a reflective subcategory of $\mathbf{Top}_{0}$.
\end{thm}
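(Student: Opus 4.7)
The plan is to apply Lemma \ref{l20} directly: it suffices to check that $\mathbf{Top}_{d^{\ast}}$ is not contained in $\mathbf{Top}_{1}$ and that $\mathbf{Top}_{d^{\ast}}$ fails one of the two closure conditions (productivity or b-closed-hereditariness). Productivity is the natural target, since the paragraph immediately preceding the theorem already records that \cite{c2023} exhibits two $d^{\ast}$-spaces whose product is not a $d^{\ast}$-space.

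First, I would exhibit a concrete $d^{\ast}$-space that is not $T_{1}$ in order to secure the hypothesis $\mathbf{Top}_{d^{\ast}}\nsubseteq \mathbf{Top}_{1}$ of Lemma \ref{l20}. The easiest witness comes from Remark \ref{r1}(4): any non-trivial sup-complete poset endowed with its Scott topology is a $d^{\ast}$-space; for instance, the two-element chain $\{0,1\}$ with $0<1$ has Scott topology $\{\emptyset,\{1\},\{0,1\}\}$, which is $T_{0}$ but not $T_{1}$. This shows $\mathbf{Top}_{d^{\ast}}\nsubseteq \mathbf{Top}_{1}$, so Lemma \ref{l20} is applicable.

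Next, invoking the example from \cite{c2023} cited above, there exist $d^{\ast}$-spaces $X_{1}$ and $X_{2}$ such that $X_{1}\times X_{2}$ is not a $d^{\ast}$-space. Consequently, $\mathbf{Top}_{d^{\ast}}$ is not productive in the sense of Definition \ref{d18}(1). By the equivalence established in Lemma \ref{l20}, the failure of productivity alone prevents $\mathbf{Top}_{d^{\ast}}$ from being reflective in $\mathbf{Top}_{0}$, which yields the theorem.

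The only potential subtlety is ensuring the hypothesis of Lemma \ref{l20} is genuinely satisfied, i.e., that our chosen non-$T_{1}$ example really is a $d^{\ast}$-space; this is immediate from Remark \ref{r1}(4). There is no combinatorial or order-theoretic obstacle to overcome, because both pillars of the argument (the lack of productivity and the reflectivity criterion) are already available in the excerpt. In short, the proof is a one-line deduction from Lemma \ref{l20} together with the cited counterexample, with the non-$T_{1}$ witness supplied by the Scott space on the two-element chain.
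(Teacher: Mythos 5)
Your proposal is correct and follows essentially the same route as the paper: the theorem is obtained by combining Lemma \ref{l20} with the failure of productivity recorded from \cite{c2023} in the preceding paragraph. Your explicit verification that $\mathbf{Top}_{d^{\ast}}\nsubseteq\mathbf{Top}_{1}$ (via the Sierpi\'{n}ski space and Remark \ref{r1}(4)) is a detail the paper leaves implicit, and it is checked correctly.
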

\section{$S^{\ast}$-well-filtered spaces}\label{s4}
In this section, we introduce the $S^{\ast}$-well-filtered space inspired by Lu and Li's work \cite{l2017} on weak well-filtered spaces.
\begin{defn}\label{d12} A $T_{0}$-space $X$ is called $S^{\ast}$-well-filtered if for any filtered family $\{K_{i}\mid i\in I\}\subseteq K(X)$, $G\in K(X)$ and $U\in \mathcal {O}(X){\setminus}\{\emptyset\}$, $\bigcap_{i\in I}K_{i}\cap G\subseteq U$ implies $K_{i}\cap G\subseteq U$ for some $i\in I$. The category
of all $S^{\ast}$-well-filtered spaces with continuous mappings is denoted by $\mathbf{S^{\ast}}$-$\mathbf{Top}_{w}$.
\end{defn}
Obviously, every strongly well-filtered space is $S^{\ast}$-well-filtered. The next example shows that an $S^{\ast}$-well-filtered space may not be a strongly well-filtered space in general.
\begin{exmp}\label{e1}
Consider the space $\Sigma\mathbb{N}=(\mathbb{N},\sigma(\mathbb{N}))$, where $\mathbb{N}$ is the set of natural numbers equipped with the Scott topology under the usual ordering. Then\\
(a) $\Sigma\mathbb{N}$ is an $S^{\ast}$-well-filtered space.

Assume that $\{{\uparrow}n\mid n\in \mathbb{N}\}\subseteq K(\Sigma\mathbb{N})$ is a filtered family, ${\uparrow} m\in K(\Sigma\mathbb{N})$ and $U\in \mathcal {O}(\Sigma\mathbb{N}){\setminus}\{\emptyset\}$ satisfying $\bigcap_{n\in \mathbb{N}}{\uparrow}n\cap {\uparrow}m\subseteq U$. Then $D=\{n\mid n\in \mathbb{N}\}$ is a directed set. If $D$ is finite, then $\bigcap_{n\in D}{\uparrow}n\cap {\uparrow}m={\uparrow}\mathrm{sup}D\cap {\uparrow}m\subseteq U$. If $D$ is infinite, then $\bigcap_{n\in D}{\uparrow}n=\emptyset$. So $\bigcap_{n\in D}{\uparrow}n\cap {\uparrow}m\subseteq U$. Since $U\neq\emptyset$, there exists $k\in \mathbb{N}$ such that $U={\uparrow}k$. Thus, there exists $l\in D$ such that $k\leq l$. So we conclude that ${\uparrow}l\cap {\uparrow}m\subseteq U$ for some $l\in D$.\\
(b) $\Sigma\mathbb{N}$ is not a strongly well-filtered space.

Indeed, $\bigcap_{n\in \mathbb{N}}{\uparrow}n=\emptyset$. But for any $n\in \mathbb{N}$, ${\uparrow}n\neq\emptyset$.
It implies that $\Sigma\mathbb{N}$ is not well-filtered. Hence, it is not strongly well-filtered.
\end{exmp}
Example \ref{e1} also shows that an $S^{\ast}$-well-filtered space may not be well-filtered.
\begin{prop}\label{p5}
A $T_{0}$-space $X$ is $S^{\ast}$-well-filtered if and only if for each filtered family $\{K_{i}\mid i\in I\}\subseteq K(X)$, $G\in K(X)$ and $C\in \Gamma(X){\setminus}\{X\}$, $K_{i}\cap G\cap C\neq \emptyset$ for each $i\in I$ implies $\bigcap_{i\in I}K_{i}\cap G\cap C\neq \emptyset$.
\end{prop}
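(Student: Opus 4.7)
The proof is essentially a contrapositive reformulation via complementation, so the plan is to convert back and forth between an open set $U$ and its closed complement $C = X \setminus U$, exploiting the equivalence $U \neq \emptyset \iff C \neq X$.

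For the forward direction, I would assume $X$ is $S^{\ast}$-well-filtered and take a filtered family $\{K_i\mid i\in I\}\subseteq K(X)$, $G \in K(X)$ and $C \in \Gamma(X) \setminus \{X\}$ with $K_i \cap G \cap C \neq \emptyset$ for every $i \in I$. Setting $U = X \setminus C$, the hypothesis $C \neq X$ gives $U \in \mathcal{O}(X) \setminus \{\emptyset\}$. Arguing by contradiction, if $\bigcap_{i\in I} K_i \cap G \cap C = \emptyset$, then $\bigcap_{i\in I} K_i \cap G \subseteq U$, so by $S^{\ast}$-well-filteredness there exists $i_0 \in I$ with $K_{i_0} \cap G \subseteq U$, i.e., $K_{i_0} \cap G \cap C = \emptyset$, contradicting the assumption. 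Hence $\bigcap_{i\in I} K_i \cap G \cap C \neq \emptyset$.

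For the converse, assume the stated intersection property and take a filtered family $\{K_i\mid i\in I\}\subseteq K(X)$, $G \in K(X)$ and $U \in \mathcal{O}(X) \setminus \{\emptyset\}$ with $\bigcap_{i\in I} K_i \cap G \subseteq U$. Put $C = X \setminus U \in \Gamma(X) \setminus \{X\}$ (since $U \neq \emptyset$). Then $\bigcap_{i\in I} K_i \cap G \cap C = \emptyset$, and contrapositively the hypothesis forces some $i_0 \in I$ with $K_{i_0} \cap G \cap C = \emptyset$, equivalently $K_{i_0} \cap G \subseteq U$. This is exactly the defining condition of $S^{\ast}$-well-filteredness.

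There is no real obstacle: the only subtlety is keeping track of the ``$U \neq \emptyset$'' restriction, which corresponds precisely to the ``$C \neq X$'' restriction in the statement, and this is what distinguishes $S^{\ast}$-well-filteredness from ordinary strong well-filteredness. Both implications are just a line of set manipulation, so the write-up should be very short.
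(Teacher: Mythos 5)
Your proof is correct, and it is exactly the routine complementation argument the paper has in mind (the paper omits the proof of Proposition \ref{p5} as immediate). Both directions check out, including the careful matching of the restriction $U\neq\emptyset$ with $C\neq X$.
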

A poset $P$ is called $S^{\ast}$-well-filtered (resp., strongly well-filtered, weak well-filtered) if $P$ with its Scott topology $\sigma(P)$ is a $S^{\ast}$-well-filtered (resp., strongly well-filtered, weak well-filtered) space.
\begin{prop}\label{p8}
Let $P$ be a dcpo which has a largest element $\top$. Then $P$ is strongly well-filtered if and only if it is $S^{\ast}$-well-filtered.
\end{prop}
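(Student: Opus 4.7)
The plan is to observe that the definitions of strongly well-filtered and $S^{\ast}$-well-filtered differ \emph{only} in that the latter restricts to nonempty open sets $U$, so the entire content of the proposition reduces to handling the case $U=\emptyset$.

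First I would record the trivial forward direction: if $P$ is strongly well-filtered then $P$ is $S^{\ast}$-well-filtered, since every instance of the $S^{\ast}$-condition (which demands $U\neq\emptyset$) is a special instance of the strongly well-filtered condition.

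For the converse, assume $P$ is $S^{\ast}$-well-filtered. Take a filtered family $\{K_{i}\mid i\in I\}\subseteq K(\Sigma P)$, a set $G\in K(\Sigma P)$, and $U\in\sigma(P)$ with $\bigcap_{i\in I}K_{i}\cap G\subseteq U$. If $U\neq\emptyset$, then the conclusion $K_{i}\cap G\subseteq U$ for some $i$ follows directly from Definition \ref{d12}. If $U=\emptyset$, I would show the hypothesis cannot occur. Since $P$ is a dcpo, the specialization order of $\Sigma P$ is $\leq$, so each member of $K(\Sigma P)$ is a nonempty upper set; because $\top$ is the largest element of $P$, any nonempty upper set contains $\top$. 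Hence $\top\in G$ and $\top\in K_{i}$ for every $i\in I$, giving $\top\in\bigcap_{i\in I}K_{i}\cap G$. Thus $\bigcap_{i\in I}K_{i}\cap G\neq\emptyset$, contradicting $\bigcap_{i\in I}K_{i}\cap G\subseteq\emptyset$. So the $U=\emptyset$ case is vacuous and the strongly well-filtered condition is satisfied.

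There is no real obstacle here beyond noticing the vacuity argument: the existence of a top element forces every nonempty saturated subset (and hence every filtered intersection of such sets together with $G$) to contain $\top$, which collapses the only difference between the two notions. The argument uses nothing more than the definitions and the identification of saturation with upper-closure in $\Sigma P$.
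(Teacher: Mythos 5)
Your proof is correct and follows essentially the same route as the paper: both arguments hinge on the observation that $\top$ lies in every nonempty saturated (upper) compact set, hence in $\bigcap_{i\in I}K_{i}\cap G$, which forces $U\neq\emptyset$ and reduces the strongly well-filtered condition to the $S^{\ast}$-well-filtered one. The only cosmetic difference is that you phrase the $U=\emptyset$ case as vacuous by contradiction, while the paper directly infers $U\neq\emptyset$ from the nonemptiness of the intersection.
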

\begin{proof}
If $P$ is strongly well-filtered, then it is $S^{\ast}$-well-filtered. Conversely, assume that $\{K_{i}\mid i\in I\}\subseteq K(\Sigma P)$ is a filtered family, $G\in K(\Sigma P)$ and $U\in \mathcal {O}(X)$ satisfying $\bigcap_{i\in I}K_{i}\cap G\subseteq U$. Then $\top\in \bigcap_{i\in I}K_{i}\cap G$, that is, $\bigcap_{i\in I}K_{i}\cap G\neq\emptyset$. It follows that $U\neq\emptyset$. Thus, there exists $i\in I$ such that $K_{i}\cap G\subseteq U$ since $P$ is $S^{\ast}$-well-filtered. It implies that $P$ is strongly well-filtered.
\end{proof}
The space $X$ is called coherent if the intersection of any two compact saturated sets is again compact. Clearly, every KC space is coherent.
\begin{prop}\label{p7}
Let $X$ be a $T_{0}$-space. Consider the following conditions:
\begin{enumerate}[(1)]
\item\label{p7i1} $X$ is $S^{\ast}$-well-filtered;
\item\label{p7i2} $X$ is weak well-filtered.
\end{enumerate}
Then $(\ref{p7i1})\Rightarrow(\ref{p7i2})$. Moreover, $(\ref{p7i1})\Leftrightarrow(\ref{p7i2})$ if $X$ is coherent.
\end{prop}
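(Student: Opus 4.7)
The plan is to prove the forward implication directly from the definitions by a clever choice of the compact set $G$, and then to use coherence in the reverse direction to reduce $S^{\ast}$-well-filteredness to weak well-filteredness by intersecting the filtered family with $G$.

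For $(\ref{p7i1})\Rightarrow(\ref{p7i2})$, suppose $\{K_i\mid i\in I\}\subseteq K(X)$ is a filtered family and $U\in\mathcal{O}(X)\setminus\{\emptyset\}$ with $\bigcap_{i\in I}K_i\subseteq U$. Fix any $i_0\in I$ and set $G=K_{i_0}\in K(X)$. Since filteredness in the Smyth order implies $\bigcap_{i\in I}K_i\subseteq K_{i_0}$, we have $\bigcap_{i\in I}K_i\cap G=\bigcap_{i\in I}K_i\subseteq U$. Applying $S^{\ast}$-well-filteredness (note $U\neq\emptyset$), there is some $i\in I$ with $K_i\cap K_{i_0}\subseteq U$. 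Using the filteredness of $\{K_i\}$, pick $j\in I$ with $K_j\subseteq K_i\cap K_{i_0}$; then $K_j\subseteq U$, establishing weak well-filteredness.

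For the coherent case, $(\ref{p7i2})\Rightarrow(\ref{p7i1})$, assume $\{K_i\mid i\in I\}\subseteq K(X)$ is filtered, $G\in K(X)$ and $U\in\mathcal{O}(X)\setminus\{\emptyset\}$ satisfy $\bigcap_{i\in I}K_i\cap G\subseteq U$. If some $K_i\cap G=\emptyset$, then trivially $K_i\cap G\subseteq U$ and we are done. Otherwise, coherence of $X$ guarantees that each $K_i\cap G$ is a nonempty compact saturated set, i.e.\ lies in $K(X)$. The family $\{K_i\cap G\mid i\in I\}$ inherits filteredness from $\{K_i\}$: given $i_1,i_2$, choose $k$ with $K_k\subseteq K_{i_1}\cap K_{i_2}$, so $K_k\cap G\subseteq(K_{i_1}\cap G)\cap(K_{i_2}\cap G)$. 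Since $\bigcap_{i\in I}(K_i\cap G)=\bigcap_{i\in I}K_i\cap G\subseteq U$ and $U$ is a nonempty open set, weak well-filteredness produces some $i\in I$ with $K_i\cap G\subseteq U$, as required.

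The only subtle point is the handling of the empty intersection $K_i\cap G$ in the converse direction, since $K(X)$ consists of \emph{nonempty} compact saturated sets; splitting into the two cases sidesteps this cleanly. Neither direction appears to present a genuine obstacle, as coherence is precisely what is needed to close the family $\{K_i\cap G\}$ inside $K(X)$.
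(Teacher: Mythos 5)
Your proof is correct and follows essentially the same route as the paper's: the forward direction via the choice $G=K_{i_0}$ followed by filteredness, and the converse via the case split on $K_i\cap G=\emptyset$ and coherence to keep $\{K_i\cap G\}$ inside $K(X)$. No further comment is needed.
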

\begin{proof}
$(\ref{p7i1})\Rightarrow(\ref{p7i2})$ Let $X$ be an $S^{\ast}$-well-filtered space. Assume that $\{K_{i}\mid i\in I\}\subseteq K(X)$ is a filtered family and $U\in \mathcal {O}(X){\setminus}\{\emptyset\}$ satisfying $\bigcap_{i\in I} K_{i}\subseteq U$. Select an $i_{0}\in I$, let $G=K_{i_{0}}$. Then $\bigcap_{i\in I} K_{i}\cap G=\bigcap_{i\in I} K_{i}\subseteq U$. Thus, there exists $i_{1}\in I$ such that $K_{i_{1}}\cap K_{i_{0}}\subseteq U$ by the $S^{\ast}$-well-filteredness of $X$. So there exists $i_{2}\in I$ such that $K_{i_{2}}\subseteq K_{i_{1}}\cap K_{i_{0}}\subseteq U$. It implies that $X$ is a weak well-filtered space.

$(\ref{p7i2})\Rightarrow(\ref{p7i1})$ Let $X$ be a coherent weak well-filtered space. Assume that $\{K_{i}\mid i\in I\}\subseteq K(X)$ is a filtered family, $G\in K(X)$ and $U\in \mathcal {O}(X){\setminus}\{\emptyset\}$ satisfying $\bigcap_{i\in I} K_{i}\cap G\subseteq U$. If there exists $i_{0}\in I$ such that $K_{i_{0}}\cap G=\emptyset$, then $K_{i_{0}}\cap G\subseteq U$. Now assume that $K_{i}\cap G\neq\emptyset$ for all $i\in I$. Then for all $i\in I$, $K_{i}\cap G\in K(X)$ by the coherence of $X$, $\{K_{i}\cap G\mid i\in I\}\subseteq K(X)$ is a filtered family. It follows that $\bigcap_{i\in I}(K_{i}\cap G)=\bigcap_{i\in I} K_{i}\cap G\subseteq U$. Because $X$ is a weak well-filtered space, there exists $i_{1}\in I$ such that $K_{1}\cap G\subseteq U$. So $X$ is an $S^{\ast}$-well-filtered space.
\end{proof}
By Proposition \ref{p7}, we get the following corollary.
\begin{cor}\label{c1} Let $X$ be a KC space. If $X$ is weak well-filtered, then it is $S^{\ast}$-well-filtered.
\end{cor}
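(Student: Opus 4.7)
The plan is to derive this as an immediate consequence of Proposition \ref{p7}, using the observation (already noted in the paragraph preceding Proposition \ref{p7}) that every KC space is coherent.

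First I would recall that in a KC space $X$, any compact subset is closed. Consequently, if $K_{1}, K_{2}\in K(X)$, then $K_{1}\cap K_{2}$ is a closed subset of the compact set $K_{1}$, hence compact, and clearly still saturated; so $X$ is coherent. This is precisely the statement quoted just before Proposition \ref{p7}, but it is worth flagging it explicitly because it is the only nontrivial content used.

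Next, I would apply the ``moreover'' half of Proposition \ref{p7}: for coherent $T_{0}$-spaces, $S^{\ast}$-well-filteredness and weak well-filteredness coincide. Since $X$ is KC, hence coherent, and weak well-filtered by hypothesis, this equivalence forces $X$ to be $S^{\ast}$-well-filtered, which is the conclusion.

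There is essentially no obstacle here; the corollary is a one-line consequence of Proposition \ref{p7}. The only thing to be careful about is not to accidentally reprove the coherent case of Proposition \ref{p7} from scratch, since the argument (replacing the filtered family $\{K_{i}\}$ by $\{K_{i}\cap G\}$ and then invoking weak well-filteredness) is already given there. So the write-up should be a short two-sentence proof that cites ``every KC space is coherent'' together with Proposition \ref{p7}.
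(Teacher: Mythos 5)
Your proposal is correct and matches the paper's argument exactly: the corollary is stated as an immediate consequence of Proposition \ref{p7} together with the remark, made just before that proposition, that every KC space is coherent. Your brief justification that KC implies coherent (the intersection $K_{1}\cap K_{2}$ is a closed subset of the compact set $K_{1}$, hence compact) is also valid and fills in the only detail the paper leaves implicit.
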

\begin{defn}\label{d13}(\cite{x2000})
A poset $P$ is called a consistent dcpo if for any directed subset $D$ of $P$ with $\bigcap_{d\in D}{\uparrow}d\neq\emptyset$, $D$ has a least upper bound in $P$.
\end{defn}
\begin{lem}\label{l3}(\cite{l2017}) Let $(X,\tau)$ be a weak well-filtered space. Then $\Omega(X)$ is a consistent dcpo and $\tau\subseteq\sigma(\Omega(X))$, where $\Omega(X)=(X, \leq_{\tau})$.
\end{lem}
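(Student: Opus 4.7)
The plan is to prove the two assertions separately, with weak well-filteredness driving both. For the dcpo part, fix a directed $D\subseteq\Omega(X)$ with $K:=\bigcap_{d\in D}{\uparrow}d\neq\emptyset$; I will show $K$ has a minimum element, which is then the supremum of $D$. The family $\{{\uparrow}d\mid d\in D\}$ is a filtered family of compact saturated sets (each ${\uparrow}d$ is the saturation of the point $d$), and weak well-filteredness applied to any open cover of $K$ readily shows $K$ is itself compact. Suppose for contradiction $K$ has no minimum. Then for each $k\in K$ there exists $k'\in K$ with $k\not\leq k'$, equivalently $k\in X\setminus cl(\{k'\})$; so $\{X\setminus cl(\{k\})\mid k\in K\}$ covers $K$. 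Compactness delivers a finite subcover $K\subseteq U:=\bigcup_{i=1}^{n}(X\setminus cl(\{k_i\}))$ with $k_1,\dots,k_n\in K$, and $U$ is nonempty since $K$ is. Weak well-filteredness then yields ${\uparrow}d\subseteq U$ for some $d\in D$. But each $k_i$ is an upper bound of $D$, so $d\leq k_i$ and $d\in cl(\{k_i\})$ for every $i$; thus $d\in\bigcap_{i=1}^{n}cl(\{k_i\})=X\setminus U$, contradicting $d\in{\uparrow}d\subseteq U$.

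For the topological inclusion $\tau\subseteq\sigma(\Omega(X))$, fix $U\in\tau$. That $U$ is an upper set in the specialization order is the standard observation that open sets are saturated, giving the first Scott-open condition. For the directed-convergence condition, let $D$ be directed in $\Omega(X)$ such that $\bigvee D$ exists and lies in $U$. Because $\bigvee D$ is the least upper bound of $D$, $\bigcap_{d\in D}{\uparrow}d={\uparrow}(\bigvee D)$; since $U$ is an upper set containing $\bigvee D$, this intersection sits inside $U$, and $U$ is nonempty. Weak well-filteredness now produces $d\in D$ with ${\uparrow}d\subseteq U$, so $d\in D\cap U$, finishing the proof that $U\in\sigma(\Omega(X))$.

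The main obstacle, I expect, is the compactness-and-cover step in the first part: both the compactness of $K$ and the final contradiction have to be funneled through weak well-filteredness, which crucially restricts to nonempty open sets, so one must check both that the open set produced by the finite subcover is genuinely nonempty and that the witnesses $k_i$ really live in $K$ (so each $d\in D$ sits below all of them). Once that is in place, the rest is routine bookkeeping using $cl(\{y\})={\downarrow}y$ and the fact that the set of upper bounds of a directed set coincides with the upward closure of its supremum whenever the latter exists.
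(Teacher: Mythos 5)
Your proof is correct. The paper itself gives no argument here---Lemma \ref{l3} is quoted from Lu and Li's paper \cite{l2017}---but your two-part argument (minimum of the upper-bound set $K=\bigcap_{d\in D}{\uparrow}d$ via a cover by sets $X\setminus cl(\{k\})$, then Scott-openness of each $U\in\tau$ via ${\uparrow}(\bigvee D)=\bigcap_{d\in D}{\uparrow}d\subseteq U$) is the standard one and all the delicate points (nonemptiness of the open sets fed to weak well-filteredness, the witnesses $k_i$ being upper bounds of $D$) are handled. One small remark: the compactness of $K$ and the finite subcover are not actually needed; applying weak well-filteredness directly to the nonempty open set $\bigcup_{k\in K}(X\setminus cl(\{k\}))$ already yields some $d\in D$ with $d\not\leq k_0$ for some $k_0\in K$, contradicting $k_0\in\bigcap_{d\in D}{\uparrow}d$.
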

\begin{cor}\label{c7}
Let $(X,\tau)$ be an $S^{\ast}$-well-filtered space. Then $\Omega(X)$ is a consistent dcpo and $\tau\subseteq\sigma(\Omega(X))$, where $\Omega(X)=(X, \leq_{\tau})$.
\end{cor}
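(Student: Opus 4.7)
The plan is to chain together two results that are already available in the paper, so the statement follows essentially by transitivity. First I would observe that the hypothesis supplies an $S^{\ast}$-well-filtered space $(X,\tau)$, and that Proposition \ref{p7} (the implication $(\ref{p7i1})\Rightarrow(\ref{p7i2})$, which holds without any coherence assumption) immediately promotes this to the conclusion that $(X,\tau)$ is weak well-filtered.

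Once weak well-filteredness is in hand, Lemma \ref{l3} applies verbatim to $(X,\tau)$ and yields both desired conclusions at once: $\Omega(X)=(X,\leq_\tau)$ is a consistent dcpo, and $\tau\subseteq\sigma(\Omega(X))$. So the proof is a two-line invocation of earlier results, with no fresh arguments or calculations required.

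Because both ingredients are already established in the excerpt, there is no real obstacle to overcome. The only thing to be a little careful about is citing the correct direction of Proposition \ref{p7}: we use the unconditional implication $(\ref{p7i1})\Rightarrow(\ref{p7i2})$, not the reverse direction, which would require $X$ to be coherent. With that caveat in mind, the corollary drops out cleanly.
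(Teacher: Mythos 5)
Your proof is correct and is exactly the intended argument: the paper states this as an unproved corollary immediately after Lemma \ref{l3}, relying on the unconditional implication $S^{\ast}$-well-filtered $\Rightarrow$ weak well-filtered from Proposition \ref{p7} followed by Lemma \ref{l3}. Your caveat about using the correct direction of Proposition \ref{p7} is apt; nothing is missing.
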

The following example illustrates that a non-coherent weak well-filtered space may not be an $S^{\ast}$-well-filtered space.
\begin{exmp}\label{e4}
Let $P=\mathbb{N}^{+}\cup\{\omega, a\}\cup\{\omega_{n}\mid n\in \mathbb{N}^{+}\}.$ Define an order $\leq$ on $P$ as follows:\\$x\leq y$ if and only if
\begin{enumerate}[(1)]
 \item $x\leq y~in~\mathbb{N}^{+}$; or
 \item $x\in \mathbb{N}^{+}\cup\{\omega\}, y=\omega$; or
 \item $x=a, y\in \{\omega_{n}\mid n\in \mathbb{N}^{+}\}$; or
 \item $x\in \mathbb{N}^{+}, y\in \{\omega_{n}\mid n\geq x\}$.
  \end{enumerate}
  \begin{figure}[h]
		\centering
		\begin{tikzpicture}[scale=0.5]
			\path (-8,-6) node[left]{$1$} coordinate;
			\fill (-8,-6) circle (2pt);
			\path (-8,-4) node[left]{$2$} coordinate;
			\fill (-8,-4) circle (2pt);
			\path (-8,-2) node[left]{$3$} coordinate;
			\fill (-8,-2) circle (2pt);
            \path (-8,1) node[left]{$n$} coordinate;
			\fill (-8,1) circle (2pt);
			\path (-7.5,5) node[left]{$\omega$} coordinate;
			\fill (-8,4) circle (2pt);
			\path (-5.5,5) node[left]{$\omega_1$} coordinate;
			\fill (-6,4) circle (2pt);
			\path (-3.5,5) node[left]{$\omega_2$} coordinate;
			\fill (-4,4) circle (2pt);
			\path (-1.5,5) node[left]{$\omega_3$} coordinate;
			\fill (-2,4) circle (2pt);
            \path (1,5) node[left]{$\omega_n$} coordinate;
			\fill (0.5,4) circle (2pt);
			\path (-2.5,0.5) node[left]{$a$} coordinate;
			\fill (-3,1) circle (2pt);
			
			\draw (-3,1) -- (-6,4);
			\draw (-3,1) -- (-4,4);
			\draw (-3,1) -- (-2,4);
			\draw (-8,-6) -- (-8,-2);
			\draw (-8,-6) -- (-6,4);
			\draw (-8,-4) -- (-4,4);
			\draw (-8,-2) -- (-2,4);
             \draw (-8,1) -- (0.5,4);
             \draw (-3,1) -- (0.5,4);
			\draw[style=dashed] (-8,-1.8) -- (-8,3.8);
            \draw[style=dashed] (-1.8,4) -- (0.3,4);
			\draw[style=dashed] (0.8,4) -- (3,4);		
			
		\end{tikzpicture}
		\caption{The poset $P$ in Example \ref{e4}.}\label{fig1}
	\end{figure}
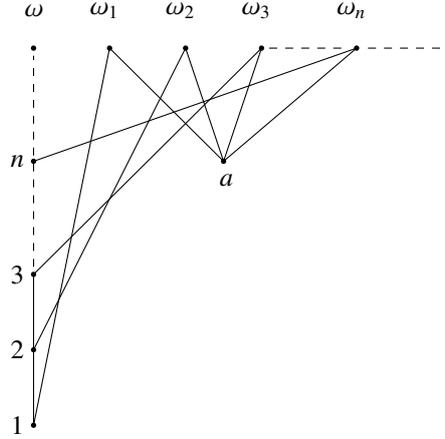

Clearly, $P$ is a dcpo and $x\ll x$ for all $x\in P{\setminus}\{\omega\}$. Then $P$ is an algebraic domain. Thus, $\Sigma P$ is sober; hence it is well-filtered. It follows that $\Sigma P$ is a weak well-filtered space. But for ${\uparrow}2$, ${\uparrow}a\in K(\Sigma P)$, obviously, ${\uparrow}2\cap{\uparrow}a=\{\omega_{2}, \omega_{3},...,\omega_{n},...\}$ is not compact. So $\Sigma P$ is not coherent. Given $m\in \mathbb{N}^{+}$, $\bigcap_{n\in \mathbb{N}^{+}}{\uparrow}n\cap{\uparrow}a=\emptyset\subseteq\{\omega_{m}\}\in \sigma(P),$ but ${\uparrow}n\cap{\uparrow}a=\{\omega_{n}, \omega_{n+1},...\}\nsubseteq\{\omega_{m}\}$ for any $n\in \mathbb{N}^{+}$. Therefore, $\Sigma P$ is not an $S^{\ast}$-well-filtered space.
\end{exmp}
Example \ref{e4} also indicates that a well-filtered space may not be $S^{\ast}$-well-filtered.
\begin{prop}\label{p8}
\begin{enumerate}[(1)]
 \item\label{p8i1} Every $T_{2}$-space is an $S^{\ast}$-well-filtered space.
 \item\label{p8i2}Every $S^{\ast}$-well-filtered space is a $d^{\ast}$-space.
 \end{enumerate}
\end{prop}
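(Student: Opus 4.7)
The plan is to prove the two items separately, each by reducing to a fact already at hand: part (1) to the well-filteredness of sober spaces, and part (2) directly to the definition of $S^{\ast}$-well-filtered by choosing appropriate compact saturated data.

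For (1), I would fix a filtered family $\{K_{i}\}_{i\in I}\subseteq K(X)$, a $G\in K(X)$, and a $U\in\mathcal{O}(X)\setminus\{\emptyset\}$ with $\bigcap_{i\in I}K_{i}\cap G\subseteq U$, and show that some $K_{i}\cap G\subseteq U$. Since $X$ is $T_{2}$, it is $T_{1}$ (so saturation is trivial) and every compact subset is closed. Hence each $K_{i}\cap G$ is a closed subset of the compact set $G$, therefore compact, so $\{K_{i}\cap G\mid i\in I\}$ is a filtered subfamily of $K(X)\cup\{\emptyset\}$ with intersection contained in $U$. Any empty member gives the conclusion for free; otherwise every $T_{2}$-space is sober and hence well-filtered, and applying well-filteredness of $X$ to this filtered family of nonempty compact saturated sets produces the required $i\in I$ with $K_{i}\cap G\subseteq U$. (In passing, this actually shows every $T_{2}$-space is strongly well-filtered, from which $S^{\ast}$-well-filteredness follows by the observation after Definition \ref{d12}.)

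For (2), given $D\in\mathcal{D}(X)$, $x\in X$ and $U\in\mathcal{O}(X)\setminus\{\emptyset\}$ with $\bigcap_{d\in D}{\uparrow}d\cap{\uparrow}x\subseteq U$, I would instantiate Definition \ref{d12} with the family $\{{\uparrow}d\mid d\in D\}$ and $G={\uparrow}x$. In any $T_{0}$-space, each principal upper set ${\uparrow}y$ is compact saturated: it is saturated as an upper set in the specialization order, and any open cover must contain some $V$ with $y\in V$, which is itself an upper set and hence contains ${\uparrow}y$. Filteredness of $\{{\uparrow}d\}_{d\in D}$ follows from directedness of $D$ via $d_{1}\leq d_{2}\Rightarrow{\uparrow}d_{2}\subseteq{\uparrow}d_{1}$. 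The $S^{\ast}$-well-filteredness of $X$ then yields some $d\in D$ with ${\uparrow}d\cap{\uparrow}x\subseteq U$, which is exactly the $d^{\ast}$-space condition of Definition \ref{d11}.

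I do not expect a substantive obstacle here: both parts follow from the defining conditions once the elementary topological facts above are in place. The only points requiring a little care are the implicit use of the chain $T_{2}\Rightarrow$ sober $\Rightarrow$ well-filtered in (1) and the verification that $K_{i}\cap G$ remains compact saturated, together with the observation in (2) that the hypothesis $U\neq\emptyset$ in Definition \ref{d11} matches the corresponding clause in Definition \ref{d12}, so nothing is lost in the reduction.
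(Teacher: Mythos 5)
Your proof is correct and follows essentially the same route as the paper: part (2) is verbatim the paper's argument (filtered family $\{{\uparrow}d\}_{d\in D}$ with $G={\uparrow}x$), and part (1) simply inlines the reduction the paper delegates to Proposition \ref{p7}, namely that a $T_{2}$-space is coherent (compact sets are closed, so $K_{i}\cap G$ stays compact saturated) and weak well-filtered, after which the intersection-family argument you give is exactly the proof of the relevant direction of that proposition. Your parenthetical remark that the argument in fact yields strong well-filteredness of $T_{2}$-spaces is also consistent with what the paper records elsewhere.
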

\begin{proof}
$(\ref{p8i1})$ Let $X$ be a $T_{2}$-space. Then $X$ is weak well-filtered and $K(X)\subseteq \Gamma(X){\setminus}\{\emptyset\}$. For any $K_{1}, K_{2}\in K(X)$, one has that $K_{1}\cap K_{2}$ is a compact subset of $X$. Thus, $X$ is coherent. It follows from Proposition \ref{p7} that $X$ is an $S^{\ast}$-well-filtered space.

$(\ref{p8i2})$ Let $X$ be an $S^{\ast}$-well-filtered space. Assume that $D\in \mathcal {D}(X)$, $x\in X$ and $U\in \mathcal {O}(X){\setminus}\{\emptyset\}$ satisfying $\bigcap_{d\in D}{\uparrow}d\cap {\uparrow}x\subseteq U$. Then $\{{\uparrow}d\mid d\in D\}\subseteq K(X)$ is a filtered family and ${\uparrow}x\in K(X)$. Since $X$ is an $S^{\ast}$-well-filtered space, there exists $d_{0}\in D$ such that ${\uparrow}d_{0}\cap {\uparrow}x\subseteq U$. So $X$ is a $d^{\ast}$-space.
\end{proof}
By Propositions \ref{p7} and \ref{p8}, one obtains the following result.
\begin{cor}\label{c2}Every coherent weak well-filtered space is a $d^{\ast}$-space.
\end{cor}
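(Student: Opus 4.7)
The plan is to derive Corollary \ref{c2} as an immediate consequence of the two preceding results, so the proof will be a short chain of implications rather than a fresh argument. Let $X$ be a coherent weak well-filtered space; the goal is to produce, for any $D\in\mathcal{D}(X)$, $x\in X$, and $U\in\mathcal{O}(X)\setminus\{\emptyset\}$ with $\bigcap_{d\in D}{\uparrow}d\cap{\uparrow}x\subseteq U$, some $d\in D$ with ${\uparrow}d\cap{\uparrow}x\subseteq U$, which is precisely the defining condition of a $d^{\ast}$-space.

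The first step is to invoke the nontrivial direction $(\ref{p7i2})\Rightarrow(\ref{p7i1})$ of Proposition \ref{p7}: since $X$ is coherent and weak well-filtered, $X$ is $S^{\ast}$-well-filtered. This is the only place where the coherence hypothesis is consumed, and it is also what upgrades the weak well-filteredness (which concerns only filtered families of compact saturated sets intersected against open sets) to the stronger condition allowing intersection with an auxiliary $G\in K(X)$.

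The second step is to apply Proposition \ref{p8}(\ref{p8i2}), which states that every $S^{\ast}$-well-filtered space is a $d^{\ast}$-space. Composing these two implications yields the corollary. No verification of hypotheses beyond coherence and weak well-filteredness is required, since both cited results are already established in the excerpt.

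There is no real obstacle here: the corollary is essentially a formal transitivity statement, and the entire proof can be written in one or two sentences of the form \emph{``By Proposition \ref{p7}, $X$ is $S^{\ast}$-well-filtered; by Proposition \ref{p8}(\ref{p8i2}), $X$ is a $d^{\ast}$-space.''} The only subtlety worth flagging is making sure the reader sees that the coherence hypothesis is needed precisely to apply the reverse direction of Proposition \ref{p7}, since without it a weak well-filtered space need not be $S^{\ast}$-well-filtered (as Example \ref{e4} shows).
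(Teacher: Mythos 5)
Your proof is correct and matches the paper's own derivation exactly: the paper obtains this corollary by combining Proposition \ref{p7} (coherent weak well-filtered implies $S^{\ast}$-well-filtered) with Proposition \ref{p8}(\ref{p8i2}) ($S^{\ast}$-well-filtered implies $d^{\ast}$-space). Nothing further is needed.
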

The following example illustrates that a $d^{\ast}$-space may not be an $S^{\ast}$-well-filtered space in general.
\begin{exmp}\label{e2}
Let $X_{1}=(\mathbb{N},\tau_{cof})$ be the space of the set of natural numbers $\mathbb{N}$ equipped with the co-finite topology $\tau_{cof}=\{\emptyset\}\cup\{U\subseteq X\mid X{\setminus}U~\mathrm{is~a~finite~set}\}$ and $X_{2}=(\{a\},\tau_{2})$ the space of the singleton set $\{a\}$ equipped with the discrete topology $\tau_{2}=\{\emptyset,\{a\}\}$. Consider the space $X=(X_{1}\cup X_{2},\tau)$, where $\tau$ is the common refinement $\tau_{cof}\vee\tau_{2}$ of the co-finite topology and the discrete topology. Then $X$ is a $T_{1}$-space since every singleton set is closed. Hence, it is a $d^{\ast}$-space. Next, we claim that $X$ is not an $S^{\ast}$-well-filtered space. In fact, $K(X)=2^{X}{\setminus}\{\emptyset\}$. Let $\mathcal {K}=\{X{\setminus}F\mid F\in X^{(<\omega)}\}$. Then $\mathcal {K}\subseteq K(X)$ is a filtered family. Clearly, $\bigcap\mathcal {K}=\emptyset\subseteq \{a\}$. But for all $F\in X^{(<\omega)}$, $X{\setminus}F\nsubseteq\{a\}$. Thus, $X$ is not weak well filtered. So it is not an $S^{\ast}$-well-filtered space by Proposition \ref{p7}.
\end{exmp}
 Example \ref{e2} also indicates that a $d^{\ast}$-space may not be a weak well-filtered space in general. The next example shows that there is an $S^{\ast}$-well-filtered space which is not a sober space.
 \begin{exmp}\label{e3}
Let $X=(R, \tau_{coc})$ be the space of the set of real numbers $R$ equipped with the co-countable topology $\tau_{coc}=\{\emptyset\}\cup\{U\subseteq X\mid X{\setminus}U~\mathrm{is~a~countable~set}\}$. Then $Irr_{c}(X)=\{X\}\cup\{\{x\}\mid x\in X\}$ and $K(X)=\{F\subseteq X\mid\mathrm{F~is~a~nonempty~finite~set}\}$. Obviously, $X$ is
a $T_{1}$-space but not a sober space. Assume that $\mathcal {K}=\{F_{i}\mid i\in I\}\subseteq K(X)$ is a filtered family and $U\in \mathcal {O}(X){\setminus}\{\emptyset\}$ satisfying $\bigcap\mathcal {K}\subseteq U$. Then $\mathcal {K}$ has a least element $F_{i_{0}}$ since $\mathcal {K}$ is filtered and all $F_{i}$ are finite. Therefore, $F_{i_{0}}=\bigcap\mathcal {K}\subseteq U$, that is, $X$ is a weak well-filtered space. As $X$ is coherent, it follows from Proposition \ref{p7} that $X$ is an $S^{\ast}$-well-filtered space.
\end{exmp}
Fig. \ref{fig2} illustrates the relationships among spaces lying between $T_{2}$-space and weak well-filtered space.
\begin{figure}[h]
	 	\centering
	 	\begin{tikzpicture}[scale=0.5]
	 		
	 		\path (10,2) node{weak well-filtered space} coordinate (a);
	 		\path (10,-1) node{$d^{\ast}$-space} coordinate (b);
	 		
            \path (1,5) node{well-filtered space} coordinate (c);
	 		\path (1,2) node{$S^{\ast}$-well-filtered space} coordinate (d);
	 		\path (1,-1) node{strong d-space} coordinate (e);
	 		
	 		\path (-8,5) node{sober space} coordinate (f);
	 		\path (-8,2) node{strongly well-filtered space} coordinate (g);
	 		\path (-8,-1) node{$T_1$-space} coordinate (h);
	 		
	 		\path (-15,2) node{$T_2$-space} coordinate (i);
	 		\path (-15,-1) node{KC space} coordinate (j);
	 		\path (11.5,0.8) node{coherent} coordinate (h);
	 	
	 		\draw [->] (-15,1.5) -- (-15,-0.5);
\draw [->] (-13.5,2) -- (-12,2);
	 		\draw [->] (-3.8,2) -- (-2.2,2);
	 		\draw [->] (4.6,2) -- (6.2,2);
	 		\draw [->] (3.8,-1) -- (8.4,-1);
	 		\draw [->] (-6.3,-1) -- (-1.4,-1);
	 		\draw [->] (-13.3,-1) -- (-9.5,-1);
	 		\draw [->] (-15,2.3) -- (-9.5,4.5);
	 		\draw [->] (-6,5) -- (-2,5);
	 		\draw [->] (3,4.5) -- (10,2.6);
	 		\draw [->] (-6,1.5) -- (1,-0.5);
	 		\draw [->] (3,1.5) -- (9.8,-0.5);
	 		\draw [->] (10,1.5) -- (10,-0.3);
	 		\draw [->] (-6,2.5) -- (1,4.5);
	 		
	 	\end{tikzpicture}
	 	\caption{The relationships of among spaces lying between $T_{2}$-space and weak well-filtered space.}\label{fig2}
	 \end{figure}
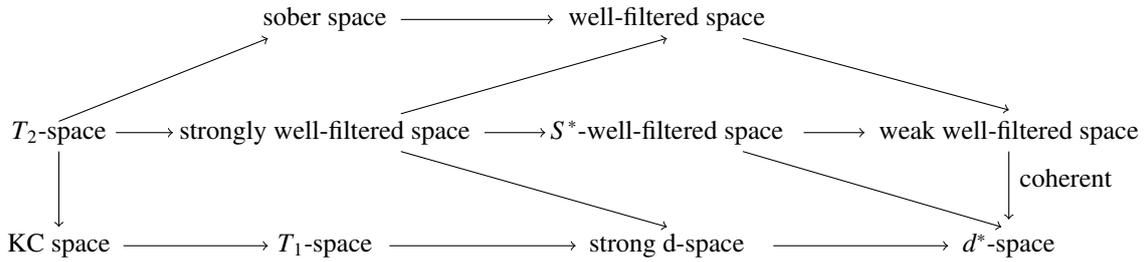
\begin{lem}\label{l11}(\cite{x2026})
Let $X$ be a $T_{0}$-space such that $\mathrm{max}(C)\neq\emptyset$ and ${\downarrow}(C\cap K)$ is closed for any nonempty $C\in \Gamma(X)$ and $K\in K(X)$, then $X$ is strongly well-filtered.
\end{lem}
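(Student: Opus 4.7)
I plan to argue by contradiction, using Zorn's lemma on a family of closed sets and then exploiting both hypotheses on a minimal such set. Suppose $\{K_i\}_{i\in I}\subseteq K(X)$ is a filtered family, $G\in K(X)$ and $U\in\mathcal{O}(X)$ with $\bigcap_{i\in I}K_i\cap G\subseteq U$; assume toward contradiction that $K_i\cap G\not\subseteq U$ for every $i\in I$, and set $C=X\setminus U\in\Gamma(X)\setminus\{\emptyset\}$, so that $C\cap K_i\cap G\neq\emptyset$ for every $i$. The aim is to produce some $x\in\bigcap_{i\in I}K_i\cap G\cap C$, which contradicts $\bigcap_{i\in I}K_i\cap G\subseteq U$.

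I will work with the family
\[
\mathcal{A}=\{F\in\Gamma(X)\setminus\{\emptyset\} : F\subseteq C,\ F\cap K_i\cap G\neq\emptyset\ \text{for every}\ i\in I\},
\]
ordered by reverse inclusion; it is nonempty because $C\in\mathcal{A}$. Assuming Zorn applies, let $F^{\ast}\in\mathcal{A}$ be minimal. Condition (ii) gives that ${\downarrow}(F^{\ast}\cap K_i)$ is closed for each $i$; this set lies in $F^{\ast}$, and a short verification using the filteredness of $\{K_i\}$ together with $F^{\ast}\in\mathcal{A}$ shows that it still meets every $K_j\cap G$, so ${\downarrow}(F^{\ast}\cap K_i)\in\mathcal{A}$. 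Minimality then forces ${\downarrow}(F^{\ast}\cap K_i)=F^{\ast}$. Running the same argument with $K=G$ in condition (ii) gives ${\downarrow}(F^{\ast}\cap G)=F^{\ast}$. Condition (i) supplies some $x\in\max(F^{\ast})$. Since $x\in F^{\ast}={\downarrow}(F^{\ast}\cap K_i)$, there is $y\in F^{\ast}\cap K_i$ with $x\leq y$, and maximality of $x$ in $F^{\ast}$ forces $x=y\in K_i$; the same reasoning with $G$ yields $x\in G$. Because $x\in F^{\ast}\subseteq C$, we obtain $x\in\bigcap_{i\in I}K_i\cap G\cap C\subseteq U\cap C=\emptyset$, a contradiction.

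The main obstacle I expect is verifying the Zorn hypothesis: for any descending chain $\{F_\alpha\}$ in $\mathcal{A}$, the intersection $F=\bigcap_\alpha F_\alpha$ must still satisfy $F\cap K_i\cap G\neq\emptyset$ for every $i$. Closedness and $F\subseteq C$ are automatic, but the required nonemptiness is subtle because $G$ is saturated but not closed, so the direct finite-intersection argument applied to $\{F_\alpha\cap K_i\cap G\}_\alpha$ breaks down. My strategy is to use condition (ii) to form the closed sets ${\downarrow}(F_\alpha\cap K_i)$, intersect each with the compact saturated $G$ to get a decreasing chain of nonempty closed subsets of the compact subspace $G$, and invoke compactness and the FIP within $G$ to obtain an auxiliary witness $z\in F\cap G$ lying below some element of every $F_\alpha\cap K_i\cap G$. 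Condition (i), applied to a carefully chosen closed set built from these witnesses---the closed cluster set of a net $(y_\alpha)$ with $y_\alpha\in F_\alpha\cap K_i\cap G$ and $y_\alpha\geq z$ being the natural candidate---should then deliver a maximal element forced to lie simultaneously in $K_i$ and $G$. Making this final upgrade rigorous, where the two hypotheses must cooperate, is the technical crux of the argument.
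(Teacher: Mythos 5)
Your endgame is correct: once a minimal $F^{\ast}\in\mathcal{A}$ exists, the identities ${\downarrow}(F^{\ast}\cap K_{i})=F^{\ast}$ and ${\downarrow}(F^{\ast}\cap G)=F^{\ast}$ do follow from minimality (via filteredness of $\{K_{i}\}$), and a maximal point of $F^{\ast}$ then lands in $\bigcap_{i}K_{i}\cap G\cap C$, giving the contradiction. The problem is that the existence of $F^{\ast}$ is never established, and you have correctly located but not closed the gap: for a descending chain $\{F_{\alpha}\}\subseteq\mathcal{A}$ you must produce a point of $\bigl(\bigcap_{\alpha}F_{\alpha}\bigr)\cap K_{i}\cap G$, and the sets $F_{\alpha}\cap K_{i}\cap G$ live in $K_{i}\cap G$, which is an intersection of two compact saturated sets and need not be compact (the space is not assumed coherent). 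Your repair does not work: compactness of $G$ applied to the closed sets ${\downarrow}(F_{\alpha}\cap K_{i})\cap G$ yields $z\in\bigcap_{\alpha}F_{\alpha}\cap G$ with $z\leq y_{\alpha}$ for some $y_{\alpha}\in F_{\alpha}\cap K_{i}$ (and indeed $y_{\alpha}\in G$ since $G$ is an upper set), but any cluster point of $(y_{\alpha})$ obtained from compactness of $K_{i}$ lies in $\bigcap_{\alpha}F_{\alpha}\cap K_{i}$ with no reason to lie in $G$, because $G$ is not closed and the cluster point need not dominate $z$. Every variant of this two-sided limiting argument runs into the same obstruction; condition (i) cannot rescue it here because the maximality trick only upgrades membership for sets already known to satisfy the fixed-point identity $F={\downarrow}(F\cap K)$, which is exactly what the missing Zorn step was supposed to deliver.

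There is a route that avoids the problem entirely, and it is worth internalizing. First run your argument \emph{without} $G$: the family $\{F\subseteq C\ \text{closed}:F\cap K_{i}\neq\emptyset\ \forall i\}$ does satisfy the Zorn chain condition, since $F_{\alpha}\cap K_{i}$ are nonempty closed subsets of the single compact set $K_{i}$; combined with (i) and (ii) this shows $X$ is well-filtered (this is essentially Xi--Lawson's theorem), so $Q=\bigcap_{i}K_{i}$ is a nonempty member of $K(X)$. Now use (ii) exactly once more, with $K=G$: since $Q\cap G\cap C=\emptyset$ and $Q$ is saturated, $Q\cap{\downarrow}(C\cap G)=\emptyset$, i.e., $Q\subseteq V:=X\setminus{\downarrow}(C\cap G)$, which is open by hypothesis. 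Well-filteredness gives $K_{i}\subseteq V$ for some $i$, whence $K_{i}\cap C\cap G=\emptyset$, i.e., $K_{i}\cap G\subseteq U$. This decomposition --- well-filteredness first, then trading the pair $(U,G)$ for the single open set $X\setminus{\downarrow}((X\setminus U)\cap G)$ --- is what lets the compactness arguments go through one compact set at a time, which is precisely what your single Zorn family cannot do.
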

By Lemma \ref{l11}, we have the following result.
\begin{prop}\label{p9}
Let $X$ be a $T_{0}$-space such that $\mathrm{max}(C)\neq\emptyset$ and ${\downarrow}(C\cap K)$ is closed for any nonempty $C\in \Gamma(X){\setminus}\{X\}$ and $K\in K(X)$, then $X$ is $S^{\ast}$-well-filtered.
\end{prop}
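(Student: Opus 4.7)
The plan is to reduce the assertion directly to Lemma \ref{l11} by observing that the only difference between $S^{\ast}$-well-filteredness and strong well-filteredness is the restriction $U\neq\emptyset$, and that this restriction is precisely what lets us get away with the weaker hypothesis (allowing $C=X$ to be excluded).

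First I would fix a filtered family $\{K_{i}\mid i\in I\}\subseteq K(X)$, an element $G\in K(X)$, and a nonempty open set $U\in\mathcal{O}(X)\setminus\{\emptyset\}$ with $\bigcap_{i\in I}K_{i}\cap G\subseteq U$. Setting $C=X\setminus U$, the assumption $U\neq\emptyset$ gives $C\in\Gamma(X)\setminus\{X\}$. If $C=\emptyset$ the conclusion is immediate, so assume $C\neq\emptyset$. Then $C$ lies in the range of closed sets for which the hypothesis of this proposition is active, namely $\max(C)\neq\emptyset$ and ${\downarrow}(C\cap K)\in\Gamma(X)$ for every $K\in K(X)$.

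Next I would replay the argument of Lemma \ref{l11} at this specific $C$. Concretely, one would suppose for contradiction that $K_{i}\cap G\cap C\neq\emptyset$ for every $i\in I$ and build the filtered family of closed sets $\{{\downarrow}(K_{i}\cap C)\mid i\in I\}$, each member being closed by our hypothesis (with $K=K_{i}$). Combining this with the maximality condition $\max(C)\neq\emptyset$ and the topological Rudin-type machinery (in the style of Lemma \ref{l14}) produces an index $i_{0}\in I$ with $K_{i_{0}}\cap G\cap C=\emptyset$, equivalently $K_{i_{0}}\cap G\subseteq U$, contradicting our contradiction hypothesis.

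The main obstacle, and essentially the only bookkeeping needed, is verifying that Lemma \ref{l11}'s proof never actually feeds the case $C=X$ into its hypothesis; given our reduction, the only closed set we use is $X\setminus U$, which is proper by the standing assumption $U\neq\emptyset$. So the weaker hypothesis of Proposition \ref{p9}, which is active only on $\Gamma(X)\setminus\{X\}$, is exactly strong enough to drive through the same argument, yielding $S^{\ast}$-well-filteredness instead of strong well-filteredness.
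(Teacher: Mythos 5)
Your proposal is correct and follows essentially the same route as the paper, which derives Proposition \ref{p9} directly from Lemma \ref{l11}: since the implication only needs to be verified for nonempty $U$, the only closed set ever fed into the hypothesis is $C=X\setminus U$ (and its closed subsets), all of which are proper, so the weakened assumption on $\Gamma(X)\setminus\{X\}$ suffices. Your explicit bookkeeping of this point is exactly the observation the paper leaves implicit.
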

\begin{lem}\label{l13}(\cite{x2026}) Let $X$ be a $d$-space such that ${\downarrow}(C\cap K)$ is closed for any nonempty $C\in \Gamma(X)$ and $K\in K(X)$. Then $X$ is strongly well-filtered.
\end{lem}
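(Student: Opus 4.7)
The plan is to reduce Lemma \ref{l13} to the already-established Lemma \ref{l11} by verifying that the extra hypothesis $\mathrm{max}(C) \neq \emptyset$ in Lemma \ref{l11} is automatically satisfied for every nonempty closed subset $C$ of a $d$-space. Once this is done, both hypotheses of Lemma \ref{l11} are in place and strong well-filteredness follows immediately from it.

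The key observation is a standard fact about $d$-spaces: if $X$ is a $d$-space, then by Definition \ref{d17}, $(X, \leq_{\tau})$ is a dcpo and $\mathcal{O}(X) \subseteq \sigma(X)$, so every closed subset of $X$ is Scott closed with respect to $\leq_{\tau}$, i.e., is a lower set that contains $\bigvee D$ whenever $D$ is a directed subset contained in it. I would prove $\mathrm{max}(C) \neq \emptyset$ for a nonempty $C \in \Gamma(X)$ via Zorn's lemma applied to $(C, \leq_{\tau})$: any nonempty chain $L \subseteq C$ is, in particular, a directed subset of $X$, so $\bigvee L$ exists in $X$ by the dcpo property; since $C$ is Scott closed and $L \subseteq C$, we have $\bigvee L \in C$. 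Thus every nonempty chain in $C$ has an upper bound in $C$, and since $C$ itself is nonempty, Zorn's lemma yields a maximal element of $C$.

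Having established $\mathrm{max}(C) \neq \emptyset$ for every nonempty $C \in \Gamma(X)$, I then combine this with the given hypothesis that ${\downarrow}(C \cap K)$ is closed for every nonempty $C \in \Gamma(X)$ and every $K \in K(X)$, and directly invoke Lemma \ref{l11} to conclude that $X$ is strongly well-filtered. There is no real obstacle in this argument; the only subtlety is recognizing that the $d$-space assumption is precisely what upgrades the weaker hypothesis set to the one required by Lemma \ref{l11}, so that Lemma \ref{l13} emerges as an essentially immediate corollary.
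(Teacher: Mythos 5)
Your proof is correct. The paper states Lemma \ref{l13} as a quoted result from \cite{x2026} and gives no proof of its own, but your reduction to Lemma \ref{l11} --- via the standard fact that in a $d$-space every nonempty closed set is Scott closed in a dcpo, so chains in it have suprema in it and Zorn's lemma yields $\mathrm{max}(C)\neq\emptyset$ --- is precisely the natural argument and is how the result is obtained in the cited source.
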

The following result follows directly from Lemma \ref{l13}.
\begin{prop}\label{p16}
Let $X$ be a $d$-space such that ${\downarrow}(C\cap K)$ is closed for any nonempty $C\in \Gamma(X){\setminus}\{X\}$ and $K\in K(X)$.
Then $X$ is $S^{\ast}$-well-filtered.
\end{prop}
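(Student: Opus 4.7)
The plan is to reduce directly to Lemma \ref{l13}, exploiting the fact that the closure hypothesis there is needed only for the complement of the target open set. Since the defining condition of $S^{\ast}$-well-filteredness restricts attention to nonempty open sets $U$, the complement $C = X \setminus U$ is automatically a proper closed subset, so the weakening from $C \in \Gamma(X)$ to $C \in \Gamma(X) \setminus \{X\}$ in the hypothesis causes no loss.

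To make this precise, I would fix a filtered family $\{K_{i} \mid i \in I\} \subseteq K(X)$, a compact saturated set $G \in K(X)$, and an open set $U \in \mathcal{O}(X) \setminus \{\emptyset\}$ with $\bigcap_{i \in I} K_{i} \cap G \subseteq U$, and must produce some $i \in I$ such that $K_{i} \cap G \subseteq U$. Setting $C = X \setminus U$, either $C = \emptyset$ (in which case $U = X$ and the conclusion $K_{i} \cap G \subseteq U$ holds trivially for every $i$) or $C$ is a nonempty proper closed subset of $X$, i.e., $C \in \Gamma(X) \setminus \{X\}$. In the nontrivial case, the hypothesis of Proposition \ref{p16} applies and guarantees that ${\downarrow}(C \cap K)$ is closed for every $K \in K(X)$.

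I would then replay the proof of Lemma \ref{l13} verbatim, with this particular $C$ in the role of its generic proper closed set. That argument combines the $d$-space property of $X$ with the closedness of ${\downarrow}(C \cap K)$ (for $K = G$ and for the $K_{i}$) to conclude, via the standard filtered-family reasoning, that $K_{i} \cap G \cap C = \emptyset$ for some $i \in I$, which is precisely $K_{i} \cap G \subseteq U$. This yields the $S^{\ast}$-well-filteredness from Definition \ref{d12}.

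The only real technical point to verify is that the proof of Lemma \ref{l13} invokes its closure hypothesis only for the single closed set $C = X \setminus U$ and not for any auxiliary closed sets introduced during the argument. This is a straightforward inspection, and is exactly the content of the parallel between Lemmas \ref{l11}, \ref{l13} and their weakened counterparts Propositions \ref{p9}, \ref{p16}: relaxing $C \in \Gamma(X)$ to $C \in \Gamma(X) \setminus \{X\}$ is the precise weakening compatible with dropping the $U = \emptyset$ case, which is why the author frames Proposition \ref{p16} as an immediate consequence of Lemma \ref{l13}.
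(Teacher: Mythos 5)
Your proposal matches the paper's own treatment, which consists entirely of the remark that the proposition ``follows directly from Lemma \ref{l13}''; your observation that $C = X\setminus U$ is automatically a \emph{proper} closed set once $U\neq\emptyset$ is exactly the reason the weakened hypothesis suffices. The one point you defer to inspection does go through, and for a slightly stronger reason than you state: any auxiliary closed sets arising in the proof of Lemma \ref{l13} are of the form ${\downarrow}(C'\cap K)$ with $C'\subseteq X\setminus U$, hence are themselves contained in $X\setminus U$ and so remain proper, which is why restricting the hypothesis to $\Gamma(X)\setminus\{X\}$ costs nothing.
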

\begin{lem}\label{l6}(\cite{x2026}) For a dcpo $P$ and $C\subseteq P$, the following two conditions are equivalent:
\begin{enumerate}[(1)]
  \item ${\downarrow}({\uparrow}x\cap C)\in \Gamma(\Sigma P)$ for all $x\in P$;
  \item ${\downarrow}(K\cap C)=\bigcup_{k\in K}{\downarrow}({\uparrow}k\cap C)\in \Gamma(\Sigma P)$ for all $K\in K(\Sigma P)$.
\end{enumerate}
\end{lem}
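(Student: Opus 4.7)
The plan is to establish the two implications separately. The direction $(2)\Rightarrow(1)$ is essentially immediate: for each $x\in P$, the principal filter ${\uparrow}x$ is saturated and Scott-compact (any Scott-open set containing $x$ automatically contains all of ${\uparrow}x$), so ${\uparrow}x\in K(\Sigma P)$. Applying (2) with $K={\uparrow}x$ yields that ${\downarrow}({\uparrow}x\cap C)={\downarrow}(K\cap C)$ is Scott-closed, which is precisely (1).

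For $(1)\Rightarrow(2)$ I would first dispose of the set-theoretic identity. Since $K$ is saturated, $K=\bigcup_{k\in K}{\uparrow}k$, so $K\cap C=\bigcup_{k\in K}({\uparrow}k\cap C)$, and taking downclosures commutes with unions, yielding ${\downarrow}(K\cap C)=\bigcup_{k\in K}{\downarrow}({\uparrow}k\cap C)$. This set is visibly a lower set, so the substantive task is closure under directed suprema.

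I would argue by contradiction. Suppose $D$ is directed with $D\subseteq{\downarrow}(K\cap C)$, that $d^{\ast}=\bigvee D$ exists, but $d^{\ast}\notin{\downarrow}(K\cap C)$; equivalently, ${\uparrow}d^{\ast}\cap K\cap C=\emptyset$. For each $d\in P$ set $S_{d}:={\downarrow}({\uparrow}d\cap C)$ and $V_{d}:=P\setminus S_{d}$; by (1) every $V_{d}$ is Scott-open, and since $d_{1}\leq d_{2}$ in $P$ forces $S_{d_{2}}\subseteq S_{d_{1}}$, the family $\{V_{d}\mid d\in D\}$ is directed under inclusion. The crucial claim is that this family already covers $K$. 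Fix $k\in K$ and, toward a contradiction, suppose ${\uparrow}k\cap{\uparrow}d\cap C\neq\emptyset$ for every $d\in D$. Any witness $c\in{\uparrow}k\cap{\uparrow}d\cap C$ satisfies $d\leq c\in{\uparrow}k\cap C$, so $d\in S_{k}$; hence $D\subseteq S_{k}$. Because $S_{k}$ is Scott-closed by (1), $d^{\ast}\in S_{k}$, which produces $c^{\ast}\geq d^{\ast}$ with $c^{\ast}\geq k$ and $c^{\ast}\in C$. Saturation of $K$ forces $c^{\ast}\in K$, and then $c^{\ast}\in{\uparrow}d^{\ast}\cap K\cap C$, contradicting our standing assumption. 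Therefore $k\in V_{d}$ for some $d\in D$, and $K\subseteq\bigcup_{d\in D}V_{d}$.

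Finally, combining Scott-compactness of $K$ with the directedness of $\{V_{d}\}$, one extracts a single $V_{d_{0}}$ that covers $K$; unwinding gives ${\uparrow}k\cap{\uparrow}d_{0}\cap C=\emptyset$ for every $k\in K$, so $K\cap{\uparrow}d_{0}\cap C=\emptyset$, contradicting $d_{0}\in D\subseteq{\downarrow}(K\cap C)$. The main obstacle I anticipate is the inner covering claim: this is where (1) is genuinely used, since the Scott-closedness of each $S_{k}$ is precisely what allows the directed supremum $d^{\ast}$ to be pushed into $S_{k}$ and thereby manufacture the contradictory element of ${\uparrow}d^{\ast}\cap K\cap C$.
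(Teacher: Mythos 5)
Your proof is correct. Note that the paper itself gives no argument for this lemma --- it is quoted from \cite{x2026} --- so there is nothing in the text to compare against; your route ($(2)\Rightarrow(1)$ via Scott-compactness of ${\uparrow}x$, and $(1)\Rightarrow(2)$ via the directed family of Scott-open sets $V_{d}=P\setminus{\downarrow}({\uparrow}d\cap C)$ covering $K$, with saturation of $K$ used to push the witness $c^{\ast}$ back into $K$) is the standard one and every step checks out.
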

The following statement is a direct consequence of Lemmas \ref{l12}, \ref{l6} and Propositions \ref{p8}, \ref{p9}.
\begin{thm}\label{t1} For a dcpo $P$ , the following four conditions are equivalent:
\begin{enumerate}[(1)]
  \item\label{t1i1} $\Sigma P$ is a $d^{\ast}$-space;
  \item\label{t1i2} For any $x\in P$ and $C\in \Gamma(\Sigma P){\setminus}\{P\}$, ${\downarrow}({\uparrow}x\cap C)\in \Gamma(\Sigma P)$;
  \item\label{t1i3} For any $K\in K(\Sigma P)$ and $C\in \Gamma(\Sigma P){\setminus}\{P\}$, ${\downarrow}(K\cap C)\in \Gamma(\Sigma P)$;
  \item\label{t1i4} $\Sigma P$ is an $S^{\ast}$-well-filtered space.
\end{enumerate}
\end{thm}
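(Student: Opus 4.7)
The plan is to establish the chain of implications $(\ref{t1i1})\Leftrightarrow(\ref{t1i2})\Leftrightarrow(\ref{t1i3})\Rightarrow(\ref{t1i4})\Rightarrow(\ref{t1i1})$ by assembling the four results cited just before the theorem, so that no new technical work is required beyond verifying that the hypotheses of Proposition \ref{p9} are met in one direction.

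First I would dispense with the equivalence $(\ref{t1i1})\Leftrightarrow(\ref{t1i2})$, which is exactly the content of Lemma \ref{l12} applied to $\Sigma P$. For $(\ref{t1i2})\Leftrightarrow(\ref{t1i3})$, I would invoke Lemma \ref{l6} at each fixed $C\in\Gamma(\Sigma P){\setminus}\{P\}$: condition (\ref{t1i2}) says ${\downarrow}({\uparrow}x\cap C)\in\Gamma(\Sigma P)$ for all $x\in P$, which by Lemma \ref{l6} is equivalent to ${\downarrow}(K\cap C)\in\Gamma(\Sigma P)$ for every $K\in K(\Sigma P)$, i.e., to (\ref{t1i3}). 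The converse direction here is trivial once one notes that ${\uparrow}x\in K(\Sigma P)$, so (\ref{t1i3}) immediately specializes to (\ref{t1i2}).

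The implication $(\ref{t1i4})\Rightarrow(\ref{t1i1})$ is then nothing but Proposition \ref{p8}(\ref{p8i2}), saying that every $S^{\ast}$-well-filtered space is a $d^{\ast}$-space. The only nontrivial step is therefore $(\ref{t1i3})\Rightarrow(\ref{t1i4})$, which I would derive from Proposition \ref{p9}. To apply that proposition I must check that, for every nonempty $C\in\Gamma(\Sigma P){\setminus}\{P\}$ and every $K\in K(\Sigma P)$, both $\mathrm{max}(C)\neq\emptyset$ and ${\downarrow}(C\cap K)\in\Gamma(\Sigma P)$ hold. The second condition is exactly (\ref{t1i3}). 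The first is a standard Zorn's lemma argument: any nonempty Scott-closed subset $C$ of a dcpo $P$ is itself closed under directed suprema (inheriting the order from $P$), so every chain in $C$ has an upper bound in $C$, and hence $C$ contains maximal elements.

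I expect the main obstacle to be purely bookkeeping: making sure the restriction $C\neq P$ in conditions (\ref{t1i2}), (\ref{t1i3}) lines up correctly with the $C\in\Gamma(X){\setminus}\{X\}$ hypothesis of Proposition \ref{p9}, and that the empty $C$ case is handled vacuously (since then ${\downarrow}(C\cap K)=\emptyset\in\Gamma(\Sigma P)$ and the max condition is not needed). Once these boundary cases are addressed, the theorem follows by chaining the four cited results.
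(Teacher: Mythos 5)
Your proposal is correct and follows exactly the route the paper intends: the paper states the theorem as a direct consequence of Lemmas \ref{l12} and \ref{l6} and Propositions \ref{p8} and \ref{p9}, and you assemble these four results in precisely that way, including the (correct) Zorn's lemma verification that every nonempty Scott-closed subset of a dcpo has a maximal element, which is needed to invoke Proposition \ref{p9}.
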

\begin{lem}\label{l14}(\cite{l2017}) Let $P$ be a weak well-filtered poset. Then $P$ is coherent if and only if ${\uparrow}x\cap{\uparrow}y$ is compact for all $x, y\in P$.
\end{lem}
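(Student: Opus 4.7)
The forward direction is immediate: each principal filter ${\uparrow}x$ lies in $K(\Sigma P)$, so coherence of $\Sigma P$ directly yields that ${\uparrow}x \cap {\uparrow}y$ is compact. For the converse, assume $P$ is weak well-filtered and ${\uparrow}x \cap {\uparrow}y$ is compact for all $x, y \in P$; my plan is to verify coherence of $\Sigma P$ by a two-step bootstrap, passing from the pairwise hypothesis to arbitrary intersections of compact saturated sets.

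In the first step I would show that ${\uparrow}x \cap K$ is compact for every $x \in P$ and $K \in K(\Sigma P)$. Fix a directed Scott open cover $\{U_i : i \in I\}$ of ${\uparrow}x \cap K$; we may assume each $U_i \neq \emptyset$ by discarding empty opens. Define
$$V_i = \{z \in P : {\uparrow}x \cap {\uparrow}z \subseteq U_i\},$$
which is readily an upper set. The crucial claim is that $V_i$ is Scott open: given a directed $D \subseteq P$ with $\bigvee D \in V_i$, the family $\{{\uparrow}x \cap {\uparrow}d : d \in D\}$ is filtered (and, by hypothesis, each nonempty member lies in $K(\Sigma P)$; if some member is empty, that $d$ is already in $V_i$). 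Its intersection equals ${\uparrow}x \cap {\uparrow}(\bigvee D) \subseteq U_i$, and since $U_i \neq \emptyset$, weak well-filteredness of $\Sigma P$ yields some $d \in D$ with ${\uparrow}x \cap {\uparrow}d \subseteq U_i$. For each $k \in K$ the compact set ${\uparrow}x \cap {\uparrow}k$ is covered by $\{U_i\}$, hence contained in some $U_i$ by directedness, so $k \in V_i$; compactness of $K$ together with the directedness of $\{V_i\}$ then forces $K \subseteq V_i$ for some $i$, yielding ${\uparrow}x \cap K = \bigcup_{k \in K}({\uparrow}x \cap {\uparrow}k) \subseteq U_i$.

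The second step is an essentially verbatim repetition with ${\uparrow}x$ replaced by a variable ${\uparrow}z$ and the hypothesis replaced by Step 1. For $K_1, K_2 \in K(\Sigma P)$ and a directed Scott open cover $\{U_i\}$ of $K_1 \cap K_2$ (each $U_i \neq \emptyset$), put $W_i = \{z \in P : {\uparrow}z \cap K_2 \subseteq U_i\}$; Step 1 guarantees each ${\uparrow}d \cap K_2$ is compact saturated, so the same weak well-filtered argument gives $W_i \in \sigma(P)$, and compactness of $K_1$ collapses the cover $\{W_i\}$ to a single $W_i \supseteq K_1$, whence $K_1 \cap K_2 \subseteq U_i$. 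The main obstacle is the Scott openness of $V_i$ (and $W_i$), which requires three ingredients to cooperate simultaneously: the pairwise-compactness hypothesis (keeping the filtered family inside $K(\Sigma P)$), the nonemptiness of $U_i$ (which is only available because we may discard empty opens from the cover), and the weak well-filtered property itself.
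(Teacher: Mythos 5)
Your argument is correct. Note that the paper itself gives no proof of this lemma --- it is quoted verbatim from \cite{l2017} --- and your two-step bootstrap (first ${\uparrow}x\cap K$, then $K_{1}\cap K_{2}$, each time showing the auxiliary sets $V_{i}$, $W_{i}$ are Scott open via the weak well-filtered property applied to the filtered family $\{{\uparrow}x\cap{\uparrow}d\mid d\in D\}$ with intersection ${\uparrow}x\cap{\uparrow}(\bigvee D)$) is exactly the standard argument from that source and its predecessor by Jia, Jung and Li. The points you flag as delicate are handled correctly: discarding empty members of the directed cover, dispatching the case where some ${\uparrow}x\cap{\uparrow}d$ is empty before invoking weak well-filteredness, and restricting to directed sets whose suprema exist so that $\bigcap_{d\in D}{\uparrow}d={\uparrow}(\bigvee D)$ without any dcpo assumption.
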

The next example illustrates that $S^{\ast}$-well-filtered dcpos need not be strongly well-filtered.
\begin{exmp}\label{e9}
Consider that the dcpo $\mathbb{J}=\mathbb{N}\times(\mathbb{N}\cup\{\omega\})$ constructed by Johnstone in \cite{j1981}, with the order defined by $(j,k)\leq(m,n)$ if and only if $j=m$ and $k\leq n$, or $n=\omega$ and $k\leq m$. Then $\mathbb{J}$ is a dcpo. For each $(x,y)\in \mathbb{N}\times\mathbb{N}$, $${\uparrow}(x,y)\cap(\mathbb{N}\times\{\omega\})
=\{(x,\omega)\}\cup\{(z,\omega)\mid z\in\mathbb{N}~and~y\leq z\}.$$
Hence, $(\mathbb{N}\times\{\omega\}){\setminus}{\uparrow}(x,y)$ is finite. For any $U\in \sigma(\mathbb{J}){\setminus}\{\emptyset\}$,
there exists $(x,y)\in \mathbb{N}\times\mathbb{N}$ such that $(x,y)\in U$.
So $(\mathbb{N}\times\{\omega\}){\setminus}U$ is finite.

Assume that $K\subseteq\mathbb{N}\times\{\omega\}$ is a nonempty subset and $\{U_{i}\}_{i\in I}\subseteq\sigma(\mathbb{J})$ is a directed open cover of $K$ with $K\subseteq\bigcup_{i\in I}U_{i}$. For any $(x,\omega)\in K$, there exists $j\in I$ such that $(x,\omega)\in U_{j}$. Since $(\mathbb{N}\times\{\omega\}){\setminus}U_{j}$ is finite and $K{\setminus}U_{j}\subseteq(\mathbb{N}\times\{\omega\}){\setminus}U_{j}$, we have that $K{\setminus}U_{j}$ is finite. Therefore, there exists $k\in I$ such that $K{\setminus}U_{j}\subseteq U_{k}$. For $U_{j}$ and $U_{k}$, there is $l\in I$ such that $U_{j}\cup U_{k}\subseteq U_{l}$. Thus, $K=(K\cap U_{j})\cup(K{\setminus}U_{j})\subseteq U_{j}\cup U_{k}\subseteq U_{l}$. That is $K\in K(\Sigma\mathbb{J})$.

It is well-known that $\Sigma\mathbb{J}$ is a weak well-filtered space \cite[Example 3.1]{l2017}. Next, we claim that $\Sigma\mathbb{J}$ is coherent. For any $(x_{1}, y_{1}), (x_{2}, y_{2})\in \mathbb{J}$, we claim that ${\uparrow}(x_{1}, y_{1})\cap{\uparrow}(x_{2}, y_{2})$ is compact. If $x_{1}=x_{2}$, without loss of generality, assume that $y_{1}\leq y_{2}$, then $(x_{1}, y_{1})\leq (x_{2}, y_{2})$ and ${\uparrow}(x_{1}, y_{1})\cap{\uparrow}(x_{2}, y_{2})={\uparrow}(x_{2}, y_{2})$ is compact. If $x_{1}\neq x_{2}$, then ${\uparrow}(x_{1}, y_{1})\cap{\uparrow}(x_{2}, y_{2})=\{(m,\omega)\mid m\geq \mathrm{max}\{y_{1},y_{2}\}\}\subseteq\mathbb{N}\times\{\omega\}$. Thus, ${\uparrow}(x_{1}, y_{1})\cap{\uparrow}(x_{2}, y_{2})$ is compact by the previous paragraphs. So $\Sigma\mathbb{J}$ is coherent by Lemma \ref{l14}. It follows from Proposition \ref{p7} that $\Sigma\mathbb{J}$ is an $S^{\ast}$-well-filtered space.

Now we claim that $\Sigma\mathbb{J}$ is not a strongly well-filtered space. Clearly, $\bigcap_{n\in \mathbb{N}}{\uparrow}(2,n)\cap{\uparrow}(3,3)=\emptyset$, but ${\uparrow}(2,n)\cap{\uparrow}(3,3)=\{(m,\omega)\mid n\leq m\}\neq\emptyset$ for any $n\geq 3$. Thus, $\Sigma\mathbb{J}$ is not a strong d-space; hence, it cannot be strongly well-filtered.
\end{exmp}
\section{$\mathrm{Some~basic~properties~of}~S^{\ast}$-well-filtered spaces}\label{s5}
In this section, we shall explore more properties of the $S^{\ast}$-well-filtered space. First, we show that the $S^{\ast}$-well-filteredness is closed-hereditary and saturated-hereditary.
\begin{prop}\label{p10} A nonempty closed subspace of $S^{\ast}$-well-filtered space is an $S^{\ast}$-well-filtered space.
\end{prop}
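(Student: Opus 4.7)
Let $A$ be a nonempty closed subspace of an $S^{\ast}$-well-filtered space $X$. The strategy is to lift a candidate configuration in $A$ to $X$, apply $S^{\ast}$-well-filteredness there, and restrict the conclusion back to $A$.

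First I would record two preparatory facts about a closed subspace. Because $A$ is closed in the $T_{0}$-space $X$, $A$ is a lower set in the specialization order $\leq_{X}$, and the specialization order on the subspace $A$ is the restriction of $\leq_{X}$ (indeed, for $y \in A$ one has $cl_{A}(\{y\}) = cl_{X}(\{y\}) \cap A = cl_{X}(\{y\})$). Consequently, for any $K \in K(A)$ the set $K^{\uparrow} := {\uparrow}_{X}K$ lies in $K(X)$ (compactness in $A$ coincides with compactness in $X$ for subsets of $A$, and saturation is built in), and the crucial identity
\[
K^{\uparrow} \cap A = K
\]
holds: if $y \in A$ with $y \geq_{X} x$ for some $x \in K$, then $y \geq_{A} x$, and saturation of $K$ in $A$ forces $y \in K$. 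A direct check in the Smyth order shows that the assignment $K \mapsto K^{\uparrow}$ carries filtered families in $K(A)$ to filtered families in $K(X)$.

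Now take a filtered family $\{K_{i} \mid i \in I\} \subseteq K(A)$, an element $G \in K(A)$, and $V \in \mathcal{O}(A) \setminus \{\emptyset\}$ with $\bigcap_{i \in I} K_{i} \cap G \subseteq V$. I would pick $U \in \mathcal{O}(X)$ with $V = U \cap A$ (necessarily $U \neq \emptyset$) and form $U^{\sharp} = U \cup (X \setminus A) \in \mathcal{O}(X) \setminus \{\emptyset\}$; the nonemptiness of $U^{\sharp}$ uses $V \neq \emptyset$ and is exactly what makes the $S^{\ast}$-hypothesis applicable. The key inclusion to verify is
\[
\bigcap_{i \in I} K_{i}^{\uparrow} \cap G^{\uparrow} \subseteq U^{\sharp},
\]
which I would prove by cases: if a point $x$ in the left side lies outside $A$, then $x \in X \setminus A \subseteq U^{\sharp}$; if $x \in A$, the identity $K^{\uparrow} \cap A = K$ applied to each $K_{i}$ and to $G$ places $x$ in $\bigcap_{i \in I} K_{i} \cap G \subseteq V \subseteq U^{\sharp}$.

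Applying the $S^{\ast}$-well-filteredness of $X$ yields some $i_{0} \in I$ with $K_{i_{0}}^{\uparrow} \cap G^{\uparrow} \subseteq U^{\sharp}$. Intersecting both sides with $A$ and using $K^{\uparrow} \cap A = K$ gives $K_{i_{0}} \cap G \subseteq U^{\sharp} \cap A = U \cap A = V$, which is exactly what $S^{\ast}$-well-filteredness of $A$ demands. I do not expect any serious obstacle; the only genuinely subtle point is the interaction between saturations in $A$ and in $X$, which is what the closedness of $A$ is used for.
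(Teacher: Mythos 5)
Your proof is correct and follows essentially the same route as the paper: both arguments lift the compact saturated sets of the closed subspace $A$ to $X$ via $K\mapsto{\uparrow}_{X}K$ and rest on the identity ${\uparrow}_{X}K\cap A=K$. The only cosmetic difference is that the paper invokes the closed-set characterization (Proposition \ref{p5}), where a nonempty proper closed subset of $A$ is automatically a proper closed subset of $X$, whereas you work with the open-set definition and extend $V=U\cap A$ to $U\cup(X\setminus A)$ --- these are complementary formulations of the same step.
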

\begin{proof}
Let $X$ be an $S^{\ast}$-well-filtered space and $Y$ a nonempty closed subspace of $X$. Assume that $\{K_{i}\mid i\in I\}\subseteq K(Y)$ is a filtered family, $G\in K(Y)$ and $C\in \Gamma(Y){\setminus}\{Y\}$ satisfying $K_{i}\cap G\cap C\neq\emptyset$ for all $i\in I$. Then $\{{\uparrow_{X}}K_{i}\mid i\in I\}\subseteq K(X)$ is a filtered family, ${\uparrow_{X}}G\in K(X)$ and $C\in \Gamma(X){\setminus}\{X\}$. Clearly, ${\uparrow_{X}}G\cap Y=G$ and ${\uparrow_{X}}K_{i}\cap G=K_{i}$ for each $i\in I$. Hence, ${\uparrow_{X}}K_{i}\cap{\uparrow_{X}}G\cap C={\uparrow_{X}}K_{i}\cap{\uparrow_{X}}G\cap C\cap Y=K_{i}\cap G\cap C\neq\emptyset$. Because $X$ is an $S^{\ast}$-well-filtered space, it follows from Proposition \ref{p5} that $\bigcap_{i\in I}{\uparrow_{X}}K_{i}\cap{\uparrow_{X}}G\cap C\neq\emptyset$. So $\bigcap_{i\in I}K_{i}\cap G\cap C=\bigcap_{i\in I}({\uparrow_{X}}K_{i}\cap Y)\cap({\uparrow_{X}}G\cap Y)\cap (C\cap Y)=\bigcap_{i\in I}{\uparrow_{X}}K_{i}\cap{\uparrow_{X}}G\cap C\neq\emptyset$. It implies that $Y$ is an $S^{\ast}$-well-filtered space.
\end{proof}
\begin{prop}\label{p11} A nonempty saturated subspace of $S^{\ast}$-well-filtered space is an $S^{\ast}$-well-filtered space.
\end{prop}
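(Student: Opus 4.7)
The plan is to mimic the structure of the proof of Proposition \ref{p10} (the closed-hereditary case), but work directly in $X$ via the saturation property instead of relying on closedness. Let $X$ be an $S^{\ast}$-well-filtered space and let $Y$ be a nonempty saturated subspace of $X$, so that $Y={\uparrow}_{X}Y$ in the specialization order of $X$. To verify the $S^{\ast}$-well-filteredness of $Y$, I will start with a filtered family $\{K_{i}\mid i\in I\}\subseteq K(Y)$, a set $G\in K(Y)$, and $U\in \mathcal{O}(Y)\setminus\{\emptyset\}$ with $\bigcap_{i\in I}K_{i}\cap G\subseteq U$, and aim to produce some index $i\in I$ with $K_{i}\cap G\subseteq U$.

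The key technical step is to transport the data back to $X$. First, I would observe that because $Y$ is an upper set in $X$, every subset of $Y$ that is saturated in $Y$ is automatically saturated in $X$; combined with the fact that compactness in the subspace topology implies compactness in the ambient space, this gives $\{K_{i}\mid i\in I\}\subseteq K(X)$ and $G\in K(X)$, and the filteredness is preserved since the Smyth order agrees on $K(Y)$ and $K(X)$ for these sets. Next, write $U=V\cap Y$ for some $V\in \mathcal{O}(X)$; since $U\neq\emptyset$, we also have $V\neq\emptyset$. Because $K_{i},G\subseteq Y$ we get $\bigcap_{i\in I}K_{i}\cap G\subseteq V\cap Y\subseteq V$, so the hypothesis $\bigcap_{i\in I}K_{i}\cap G\subseteq V$ holds in $X$ with $V\in\mathcal{O}(X)\setminus\{\emptyset\}$.

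Applying the $S^{\ast}$-well-filteredness of $X$ to the filtered family $\{K_{i}\mid i\in I\}$, the compact saturated set $G$, and the open set $V$, there exists $i\in I$ with $K_{i}\cap G\subseteq V$. Intersecting with $Y$ and using $K_{i}\cap G\subseteq Y$ gives $K_{i}\cap G\subseteq V\cap Y=U$, which is precisely what is needed. The main point to be careful about is the non-emptiness condition: in the definition of $S^{\ast}$-well-filteredness we must feed a nonempty open set into $X$, and this is exactly why writing $U=V\cap Y$ with $V\neq\emptyset$ is the right move. There are no serious obstacles beyond this bookkeeping; the whole argument is a short, one-page verification parallel to Proposition \ref{p10}.
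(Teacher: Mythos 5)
Your proof is correct and follows essentially the same route as the paper's: transport the $K_{i}$, $G$ to $K(X)$ using that the saturated subspace $Y$ is an upper set of $X$, write $U=V\cap Y$ with $V\in\mathcal{O}(X)\setminus\{\emptyset\}$, apply the $S^{\ast}$-well-filteredness of $X$ to get $K_{i_{0}}\cap G\subseteq V$, and intersect back with $Y$. No issues.
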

\begin{proof}
Let $X$ be an $S^{\ast}$-well-filtered space and $Y$ a nonempty saturated subspace of $X$. Assume that $\{K_{i}\mid i\in I\}\subseteq K(Y)$ is a filtered family, $G\in K(Y)$ and $U\in \mathcal {O}(Y){\setminus}\{\emptyset\}$ satisfying $\bigcap_{i\in I} K_{i}\cap G\subseteq U$. Then there exists $V\in \mathcal {O}(X){\setminus}\{\emptyset\}$ such that $U=V\cap Y$. As $Y={\uparrow_{X}}Y$, we obtain that $\{K_{i}\mid i\in I\}\subseteq K(X)$ is a filtered family, $G\in K(X)$. Thus, $\bigcap_{i\in I} K_{i}\cap G\subseteq U\subseteq V$. By the $S^{\ast}$-well-filteredness of $X$, there is $i_{0}\in I$ such that $K_{i_{0}}\cap G\subseteq V$. It follows that $K_{i_{0}}\cap G\subseteq V\cap Y=U$. So $Y$ is an $S^{\ast}$-well-filtered space.
\end{proof}

Second, we will discuss whether the image of an $S^{\ast}$-well-filtered space under a retraction mapping remains $S^{\ast}$-well-filtered.

A topological space $Y$ is a retract of a topological space $X$ if there are two continuous mappings $f: X\rightarrow Y$ and $g: Y \rightarrow X$ such that $f\circ g=id_{Y}$. It is well-known that a retract of a $T_{0}$-space is a $T_{0}$-space \cite{g2013}.
\begin{prop}\label{p12} A retract of $S^{\ast}$-well-filtered space is an $S^{\ast}$-well-filtered space.
\end{prop}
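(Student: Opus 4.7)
Suppose $Y$ is a retract of an $S^{\ast}$-well-filtered space $X$ via continuous maps $f:X\to Y$ and $g:Y\to X$ with $f\circ g=\mathrm{id}_{Y}$. Given a filtered family $\{K_{i}\mid i\in I\}\subseteq K(Y)$, $G\in K(Y)$, and $U\in\mathcal{O}(Y)\setminus\{\emptyset\}$ with $\bigcap_{i\in I}K_{i}\cap G\subseteq U$, the goal is to produce some $i_{0}\in I$ with $K_{i_{0}}\cap G\subseteq U$. The strategy is to transport the data to $X$ via $g$, apply $S^{\ast}$-well-filteredness there, and push the conclusion back along $f$ using the identity $f\circ g=\mathrm{id}_{Y}$.

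First, I would set $K'_{i}:={\uparrow}g(K_{i})$ and $G':={\uparrow}g(G)$. Since $g$ is continuous and $K_{i},G$ are compact, $g(K_{i})$ and $g(G)$ are compact in $X$, so their saturations $K'_{i}$ and $G'$ lie in $K(X)$. Filteredness of $\{K_{i}\}$ in the Smyth order transports: for $i,j\in I$ choose $k\in I$ with $K_{k}\subseteq K_{i}\cap K_{j}$; then $g(K_{k})\subseteq g(K_{i})\cap g(K_{j})$, hence $K'_{k}\subseteq K'_{i}\cap K'_{j}$. The preimage $f^{-1}(U)$ is open in $X$; it is nonempty because for any $y\in U$ one has $f(g(y))=y\in U$, so $g(y)\in f^{-1}(U)$.

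Next, I would verify the crucial containment $\bigcap_{i\in I}K'_{i}\cap G'\subseteq f^{-1}(U)$. Take $x$ in this intersection. For each $i\in I$ there is $k_{i}\in K_{i}$ with $g(k_{i})\leq x$ in the specialization order of $X$, and continuity of $f$ forces $k_{i}=f(g(k_{i}))\leq f(x)$ in $Y$; since $K_{i}$ is saturated (an upper set), $f(x)\in K_{i}$. The same argument applied to $G$ gives $f(x)\in G$, so $f(x)\in\bigcap_{i\in I}K_{i}\cap G\subseteq U$, whence $x\in f^{-1}(U)$.

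Finally, applying the $S^{\ast}$-well-filteredness of $X$ to the filtered family $\{K'_{i}\}$, the compact saturated set $G'$, and the nonempty open set $f^{-1}(U)$ produces $i_{0}\in I$ with $K'_{i_{0}}\cap G'\subseteq f^{-1}(U)$. For any $y\in K_{i_{0}}\cap G$, $g(y)\in g(K_{i_{0}})\cap g(G)\subseteq K'_{i_{0}}\cap G'$, so $y=f(g(y))\in U$, completing the proof. The only step requiring care is the containment inside $f^{-1}(U)$, which relies jointly on the saturatedness of $K_{i}$ and $G$ and on the monotonicity of $f$ with respect to specialization order; the rest is routine transport.
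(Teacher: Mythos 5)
Your proposal is correct and follows essentially the same route as the paper: transport $\{K_i\}$ and $G$ to $X$ as ${\uparrow}g(K_i)$ and ${\uparrow}g(G)$, verify that their intersection lands in $f^{-1}(U)$, apply the $S^{\ast}$-well-filteredness of $X$, and pull the conclusion back through $f\circ g=\mathrm{id}_Y$. The only cosmetic difference is that you check the key containment pointwise via the specialization order and saturatedness of $K_i$ and $G$, whereas the paper writes it as a chain of set-level inclusions using monotonicity of $f$; the two verifications are the same argument.
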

\begin{proof}
Let $X$ be an $S^{\ast}$-well-filtered space and $Y$ a retract of $X$. Then $Y$ is a $T_{0}$-space and there exist continuous mappings $f: X\rightarrow Y$ and $g: Y \rightarrow X$ such that $f\circ g=id_{Y}$. Assume that $\{K_{i}\mid i\in I\}\subseteq K(Y)$ is a filtered family, $G\in K(Y)$ and $U\in \mathcal {O}(Y){\setminus}\{\emptyset\}$ satisfying $\bigcap_{i\in I} K_{i}\cap G\subseteq U$. As $f$ and $g$ are continuous, we get that $f^{-1}(U)\in \mathcal {O}(X){\setminus}\{\emptyset\}$ and $g(K)$ is compact for any $K\in K(Y)$. Then $\{{\uparrow}g(K_{i})\mid i\in I\})\subseteq K(X)$ is a filtered family and ${\uparrow}g(G)\in K(X)$. We claim that $\bigcap_{i\in I}{\uparrow}g(K_{i})\cap{\uparrow}g(G)\subseteq f^{-1}(U)$. Indeed, for any $x\in \bigcap_{i\in I}{\uparrow}g(K_{i})\cap{\uparrow}g(G)$, one has that
\begin{align*}
f(x)\in f\left(\bigcap_{i\in I}{\uparrow}g(K_{i})\cap{\uparrow}g(G)\right)
&\subseteq f\left(\bigcap_{i\in I}{\uparrow}g(K_{i})\right)\cap f({\uparrow}g(G))
\\&\subseteq \bigcap_{i\in I}f({\uparrow}g(K_{i}))\cap f({\uparrow}g(G))
\\&\subseteq \bigcap_{i\in I}{\uparrow}(f(g(K_{i}))\cap {\uparrow}f(g(G))
\\&\subseteq \bigcap_{i\in I}{\uparrow}K_{i}\cap {\uparrow}G
\\&\subseteq \bigcap_{i\in I}K_{i}\cap G
\\&\subseteq U,
\end{align*}
that is, $x\in f^{-1}(U)$. Therefore, $\bigcap_{i\in I}{\uparrow}g(K_{i})\cap{\uparrow}g(G)\subseteq f^{-1}(U)$. Since $X$ is $S^{\ast}$-well-filtered, there exists $i_{0}\in I$ such that ${\uparrow}g(K_{i_{0}})\cap{\uparrow}g(G)\subseteq f^{-1}(U)$. For any $t\in K_{i_{0}}\cap G$, it follows that $g(t)\in g(K_{i_{0}}\cap G)\subseteq g({\uparrow}K_{i_{0}})\cap g({\uparrow}G)\subseteq {\uparrow}g(K_{i_{0}})\cap {\uparrow}g(G)\subseteq f^{-1}(U)$. Hence, $t\in g^{-1}(f^{-1}(U))=(f\circ g)^{-1}(U)=U$. That is, $K_{i_{0}}\cap G\subseteq U$ for some $i_{0}\in I$. So $Y$ is an $S^{\ast}$-well-filtered space.
\end{proof}
\begin{prop}\label{p13} Let $\{X_{i}\mid i\in I \}$ be a family
of $T_{0}$-space. If the product space $\prod _{i\in I}X_{i}$ is an $S^{\ast}$-well-filtered space, then $X_{i}$ is an $S^{\ast}$-well-filtered space for each $i\in I$.
\end{prop}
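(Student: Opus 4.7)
The plan is to reduce this proposition directly to the retract result (Proposition \ref{p12}), exactly parallel to the way Proposition \ref{p2} for $d^{\ast}$-spaces was obtained from Lemma \ref{l1}. So the essential content of the argument is the observation that each factor $X_{i}$ of a product $\prod_{i\in I}X_{i}$ is a retract of the product in $\mathbf{Top}_{0}$; once that is in hand, Proposition \ref{p12} finishes the job immediately.

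First I would fix any index $i_{0}\in I$ and exhibit the retraction/section pair explicitly. Take as the retraction $f:\prod_{i\in I}X_{i}\to X_{i_{0}}$ the canonical projection $\pi_{i_{0}}$, which is continuous by definition of the product topology. For the section, pick a point $a_{i}\in X_{i}$ for each $i\neq i_{0}$ (possible since every $X_{i}$ is nonempty; if some $X_{i}$ is empty the product is empty and the claim is vacuous) and define $g:X_{i_{0}}\to\prod_{i\in I}X_{i}$ by $g(x)_{i_{0}}=x$ and $g(x)_{i}=a_{i}$ for $i\neq i_{0}$. Continuity of $g$ follows from the universal property: $\pi_{i_{0}}\circ g=\mathrm{id}_{X_{i_{0}}}$ is continuous, and for $i\neq i_{0}$, $\pi_{i}\circ g$ is the constant map at $a_{i}$, hence continuous. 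Therefore $f\circ g=\mathrm{id}_{X_{i_{0}}}$, which shows that $X_{i_{0}}$ is a retract of $\prod_{i\in I}X_{i}$.

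Now assuming $\prod_{i\in I}X_{i}$ is $S^{\ast}$-well-filtered, Proposition \ref{p12} (which states that a retract of an $S^{\ast}$-well-filtered space is again $S^{\ast}$-well-filtered) gives that $X_{i_{0}}$ is $S^{\ast}$-well-filtered. Since $i_{0}\in I$ was arbitrary, every $X_{i}$ is $S^{\ast}$-well-filtered, completing the proof.

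There is essentially no obstacle here; the whole force of the argument is carried by Proposition \ref{p12}. The only minor point worth flagging is the choice of section, which requires each factor to be nonempty; this is a standing assumption in the statement (a product indexed by $I$ of $T_{0}$-spaces is typically taken to be nonempty, and otherwise the conclusion is vacuous). No calculation or topological manipulation beyond the universal property of products is needed.
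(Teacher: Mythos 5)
Your proposal is correct and follows exactly the paper's argument: the paper also observes that each $X_{i}$ is a retract of the product and then invokes Proposition \ref{p12}. You merely spell out the section--retraction pair explicitly, which the paper leaves implicit.
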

\begin{proof}
Let $X=\prod _{i\in I}X_{i}$ be an $S^{\ast}$-well-filtered space. For each $i\in I$, $X_{i}$ is a retract of $X$. It follows from Proposition \ref{p12} that $X_{i}$ is an $S^{\ast}$-well-filtered space for each $i\in I$.
\end{proof}
In \cite{x2026}, it has been proved that the product of two strongly well-filtered spaces need not be strongly well-filtered. The following example shows that the product of two $S^{\ast}$-well-filtered spaces is not an $S^{\ast}$-well-filtered space.

For the chain $2=\{0, 1\}$, the space 2 equipped with the Scott topology is called the Sierpi\'{n}ski space. In particular, the space is sober.
\begin{exmp}\label{e6}
Consider the space $\Sigma \mathbb{N}$ in Example \ref{e1} and the Sierpi\'{n}ski space $\Sigma2$. Obviously, $\Sigma \mathbb{N}$ and $\Sigma2$ are $S^{\ast}$-well-filtered. It is easy to prove that $\Sigma \mathbb{N}\times\Sigma2=\Sigma(\mathbb{N}\times2)$. We claim that $\Sigma \mathbb{N}\times\Sigma2$ is not $S^{\ast}$-well-filtered. Indeed, $\{{\uparrow}(n,0)\mid n\in \mathbb{N}\}\subseteq K(\Sigma \mathbb{N}\times\Sigma2)$ is a filtered family and $U={\uparrow}(n,1)={\uparrow}n\times\{1\}$. Then $U$ is a nonempty Scott open subset of $\Sigma \mathbb{N}\times\Sigma2$ and $\bigcap_{n\in \mathbb{N}}{\uparrow}(n,0)=\emptyset\subseteq U$. But for any $n\in \mathbb{N}$, ${\uparrow}(n,0)\nsubseteq U$. So $\Sigma \mathbb{N}\times\Sigma2$ is not a weak well-filtered space. It follows from Proposition \ref{p7} that $\Sigma \mathbb{N}\times\Sigma2$ is not an $S^{\ast}$-well-filtered space.
\end{exmp}
From Example \ref{e6} and Lemma \ref{l20}, we can immediately deduce the following result.
\begin{prop}\label{p17}
The category $\mathbf{S^{\ast}}$-$\mathbf{Top}_{w}$ is not a reflective subcategory of $\mathbf{Top}_{0}$.
\end{prop}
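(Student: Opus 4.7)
The plan is to apply Lemma \ref{l20} contrapositively: since that lemma characterises reflectivity (for subcategories not contained in $\mathbf{Top}_1$) as the conjunction of productivity and b-closed-heredity, failure of productivity alone suffices for non-reflectivity, provided we first verify the side condition $\mathbf{S^{\ast}}\text{-}\mathbf{Top}_w \nsubseteq \mathbf{Top}_1$.

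First I would verify the side condition. The Sierpi\'nski space $\Sigma 2$ is sober, and every sober space is well-filtered, hence strongly well-filtered, hence $S^{\ast}$-well-filtered; but $\Sigma 2$ is visibly not $T_1$. Thus $\Sigma 2 \in \mathbf{S^{\ast}}\text{-}\mathbf{Top}_w$ and $\Sigma 2 \notin \mathbf{Top}_1$, so $\mathbf{S^{\ast}}\text{-}\mathbf{Top}_w \nsubseteq \mathbf{Top}_1$, making Lemma \ref{l20} applicable.

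Next, I would invoke Example \ref{e6}: both $\Sigma\mathbb{N}$ and $\Sigma 2$ lie in $\mathbf{S^{\ast}}\text{-}\mathbf{Top}_w$, yet their product $\Sigma\mathbb{N}\times\Sigma 2 = \Sigma(\mathbb{N}\times 2)$ fails to be $S^{\ast}$-well-filtered. Consequently there exists a family $\{X_i\}_{i\in I} \subseteq \mathbf{S^{\ast}}\text{-}\mathbf{Top}_w$ whose product $\prod_{i\in I} X_i$ is not in $\mathbf{S^{\ast}}\text{-}\mathbf{Top}_w$, meaning the category is not productive in the sense of Definition \ref{d18}(1).

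Finally, by Lemma \ref{l20}, a subcategory $\mathbf{K}\nsubseteq \mathbf{Top}_1$ is reflective in $\mathbf{Top}_0$ if and only if it is both productive and b-closed-hereditary. Since $\mathbf{S^{\ast}}\text{-}\mathbf{Top}_w$ fails productivity, it cannot be reflective in $\mathbf{Top}_0$. There is essentially no obstacle here: all the heavy lifting is done by Example \ref{e6} (the concrete failure of products) and Lemma \ref{l20} (the general categorical characterisation); the only thing to be careful about is explicitly checking the hypothesis $\mathbf{S^{\ast}}\text{-}\mathbf{Top}_w\nsubseteq \mathbf{Top}_1$ before quoting the lemma.
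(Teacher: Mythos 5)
Your proposal follows exactly the paper's route: Example \ref{e6} supplies the failure of productivity, and Lemma \ref{l20} converts that into non-reflectivity; making the side condition $\mathbf{S^{\ast}}$-$\mathbf{Top}_{w}\nsubseteq \mathbf{Top}_{1}$ explicit is a worthwhile addition the paper leaves tacit. One step in your verification of that side condition is wrong, however: you argue ``$\Sigma 2$ is sober, hence well-filtered, hence strongly well-filtered,'' but well-filtered does \emph{not} imply strongly well-filtered --- the paper's Example \ref{e4} is a sober (hence well-filtered) Scott space that is not even $S^{\ast}$-well-filtered. The conclusion you need is still true and easy to salvage: $\Sigma 2$ is finite, so every filtered family in $K(\Sigma 2)$ has a least member and the strongly well-filtered condition holds trivially (alternatively, Example \ref{e6} already asserts that $\Sigma 2$ is $S^{\ast}$-well-filtered). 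With that justification repaired, the proof is correct and coincides with the paper's.
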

Finally, we will discuss the function spaces related to $S^{\ast}$-well-filtered spaces.

Given topological spaces $X$ and $Y$, let $\mathrm{TOP}(X,Y)$ be the set of all continuous functions from $X$ to $Y$.

\begin{defn}\label{d16}(\cite{g2003}) For two spaces $X$ and $Y$, the Isbell topology on the set $\mathrm{TOP}(X,Y)$ is generated by the subbasis of the form $$N(H\leftarrow V)=\left\{f\in TOP(X,Y)\mid f^{-1}(V)\in H\right\},$$
where $H$ is a Scott open subset of the complete lattice $\mathcal {O}(X)$ and $V$ is open in $Y$. Let $[X, Y]$ denote $\mathrm{TOP}(X,Y)$ endowed with the Isbell topology.
\end{defn}
\begin{lem}\label{l7}(\cite{l2021}) Let $X$ and $Y$ be two $T_{0}$ spaces. Consider the mapping
\begin{align*}
 \xi:~&Y\rightarrow [X, Y]
\\& y\mapsto \xi_{y},
\end{align*}
where $\xi_{y}(x)=y$ for all $x\in X$. Then $\xi$ is continuous.
\end{lem}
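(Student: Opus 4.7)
The plan is to verify continuity of $\xi$ directly against the subbasis of the Isbell topology. A map into $[X,Y]$ is continuous iff the preimage of each subbasic open set $N(H\leftarrow V)$ is open, where $H\in\sigma(\mathcal{O}(X))$ and $V\in\mathcal{O}(Y)$. So I will fix such an $H$ and $V$ and compute $\xi^{-1}(N(H\leftarrow V))$ explicitly.

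The key observation is that for the constant map $\xi_y$, the preimage $\xi_y^{-1}(V)$ is either all of $X$ or empty: it equals $X$ when $y\in V$ and $\emptyset$ when $y\notin V$. Therefore
\[
\xi^{-1}(N(H\leftarrow V)) = \{y\in Y \mid \xi_y^{-1}(V)\in H\} = \{y\in V \mid X\in H\}\cup\{y\in Y\setminus V \mid \emptyset\in H\}.
\]
From here a short case analysis, together with the fact that $H$ is an upper set in $(\mathcal{O}(X),\subseteq)$, will finish the argument.

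First, if $X\notin H$, then since every $U\in\mathcal{O}(X)$ satisfies $U\subseteq X$ and $H$ is upward closed, $H$ must be empty; hence $N(H\leftarrow V)=\emptyset$ and its preimage is $\emptyset$, which is open. Second, if $\emptyset\in H$, then upward closure of $H$ forces $H=\mathcal{O}(X)$, so $N(H\leftarrow V)=[X,Y]$ and its preimage is $Y$, which is open. In the remaining case $X\in H$ and $\emptyset\notin H$, the formula above collapses to $\xi^{-1}(N(H\leftarrow V))=V$, which is open in $Y$ by assumption.

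There is no real obstacle here; the only point requiring care is remembering that Scott-open subsets of the lattice $\mathcal{O}(X)$ are in particular upper sets with respect to inclusion, which is what collapses the case analysis to three cases with open preimages. Once the three cases are written out, continuity of $\xi$ follows immediately.
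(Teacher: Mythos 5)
Your proof is correct. The paper cites this lemma from the literature without reproducing a proof, and your direct verification against the Isbell subbasis is the standard argument: the observation that $\xi_y^{-1}(V)\in\{\emptyset,X\}$, combined with the fact that a Scott-open $H\subseteq\mathcal{O}(X)$ is an upper set (so $X\notin H$ forces $H=\emptyset$ and $\emptyset\in H$ forces $H=\mathcal{O}(X)$), correctly reduces $\xi^{-1}(N(H\leftarrow V))$ to one of $\emptyset$, $Y$, or $V$, each open in $Y$.
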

\begin{lem}\label{l8}(\cite{l2023}) Let $X$ be a nonempty $T_{0}$ space. For each $x\in X$, define a function $F_{x}:
[X, Y]\rightarrow Y$ by $F_{x}(f)=f(x)$ for any $f\in [X, Y]$. Then $F_{x}$ is continuous.
\end{lem}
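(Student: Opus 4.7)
The plan is to verify continuity of $F_x$ directly against the subbasis that defines the Isbell topology. Given an open set $V\subseteq Y$, I need to show that
\[
F_x^{-1}(V)=\{f\in [X,Y]\mid f(x)\in V\}=\{f\in [X,Y]\mid x\in f^{-1}(V)\}
\]
is open in $[X,Y]$. The natural move is to try to express this preimage as a single subbasic open set $N(H\leftarrow V)$ for an appropriate Scott open $H\subseteq \mathcal{O}(X)$, because the only restriction $V$ gets to impose on $f$ is on the set $f^{-1}(V)\in\mathcal{O}(X)$.

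Concretely, I would introduce the candidate
\[
H_x=\{U\in\mathcal{O}(X)\mid x\in U\},
\]
the family of open neighborhoods of $x$ inside the complete lattice $\mathcal{O}(X)$. Two quick verifications are needed. First, $H_x$ is an upper set in $\mathcal{O}(X)$: if $x\in U\subseteq W$ with $W$ open, then $x\in W$, so $W\in H_x$. Second, $H_x$ is inaccessible by directed joins: if $\{U_i\}_{i\in I}\subseteq\mathcal{O}(X)$ is directed and $\bigcup_{i\in I}U_i\in H_x$, then $x\in\bigcup_{i\in I}U_i$, whence $x\in U_{i_0}$ for some $i_0\in I$, i.e.\ $U_{i_0}\in H_x$. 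Hence $H_x\in\sigma(\mathcal{O}(X))$.

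With that in hand, the identification
\[
F_x^{-1}(V)=\{f\in [X,Y]\mid f^{-1}(V)\in H_x\}=N(H_x\leftarrow V)
\]
is immediate from the definitions, so $F_x^{-1}(V)$ is a subbasic open set of $[X,Y]$. Since this holds for every open $V\subseteq Y$, $F_x$ is continuous. There is no real obstacle here; the only point that might trip one up is checking that $H_x$ is genuinely Scott open in the complete lattice $(\mathcal{O}(X),\subseteq)$, and this is exactly the two routine verifications above. The nonemptiness of $X$ in the hypothesis is used only to guarantee that some $x\in X$ exists so that the statement is not vacuous.
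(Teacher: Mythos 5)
Your proof is correct and is the standard argument for this fact (the paper itself only cites \cite{l2023} and gives no proof): exhibiting $F_x^{-1}(V)$ as the subbasic open $N(H_x\leftarrow V)$ with $H_x=\{U\in\mathcal{O}(X)\mid x\in U\}$, after checking that $H_x$ is Scott open in $\mathcal{O}(X)$, is exactly how this is done. No gaps.
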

It follows from Lemmas \ref{l7} and \ref{l8} that $Y$ is a retract of $[X, Y]$. By Proposition \ref{p12}, we get the following results.
\begin{prop}\label{p14} If $X$ is a nonempty $T_{0}$
space and $[X, Y]$ is an $S^{\ast}$-well-filtered space, then $Y$ is an $S^{\ast}$-well-filtered space.
\end{prop}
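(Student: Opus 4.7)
The plan is to exhibit $Y$ as a retract of $[X,Y]$ and then invoke Proposition \ref{p12}, which asserts that a retract of an $S^{\ast}$-well-filtered space is an $S^{\ast}$-well-filtered space. Since the hypothesis assumes $X$ is nonempty, I can fix some point $x_{0}\in X$, and this is the only place where the nonemptiness of $X$ is used.

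First I would introduce the two continuous maps already prepared in the excerpt. Lemma \ref{l7} gives the continuous map $\xi:Y\to[X,Y]$ sending $y$ to the constant function $\xi_{y}$. Lemma \ref{l8} gives, for the chosen $x_{0}\in X$, a continuous evaluation map $F_{x_{0}}:[X,Y]\to Y$ defined by $F_{x_{0}}(f)=f(x_{0})$. The composite $F_{x_{0}}\circ\xi:Y\to Y$ satisfies
\[
(F_{x_{0}}\circ\xi)(y)=F_{x_{0}}(\xi_{y})=\xi_{y}(x_{0})=y
\]
for every $y\in Y$, so $F_{x_{0}}\circ\xi=\mathrm{id}_{Y}$. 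This is exactly the definition of $Y$ being a retract of $[X,Y]$ given just before Proposition \ref{p12}.

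With the retract structure established, the conclusion is immediate from Proposition \ref{p12}: since $[X,Y]$ is assumed $S^{\ast}$-well-filtered and $Y$ is a retract of it, $Y$ is itself an $S^{\ast}$-well-filtered space. There is essentially no obstacle in this argument; all the substantive work has already been done in Lemmas \ref{l7} and \ref{l8} (continuity of the constant-function embedding and of the point-evaluation map) and in Proposition \ref{p12} (the verification that the $S^{\ast}$-well-filtered property descends along continuous retractions). The proof is therefore a two-line composition, and the only thing to record explicitly is the identity $F_{x_{0}}\circ\xi=\mathrm{id}_{Y}$.
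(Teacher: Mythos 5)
Your proposal is correct and follows exactly the paper's argument: the text preceding Proposition \ref{p14} notes that Lemmas \ref{l7} and \ref{l8} exhibit $Y$ as a retract of $[X,Y]$, and the proposition is then an immediate consequence of Proposition \ref{p12}. Your explicit verification of $F_{x_{0}}\circ\xi=\mathrm{id}_{Y}$ is a welcome detail that the paper leaves implicit.
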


Conversely, one naturally asks the following question:

If $X$ is a nonempty $T_{0}$-space and $Y$ is an $S^{\ast}$-well-filtered space, then is $[X, Y]$ an $S^{\ast}$-well-filtered space?

The following example provides a negative answer to the question.
\begin{exmp}\label{e7}
Consider the space $\Sigma \mathbb{N}$ and the Sierpi\'{n}ski space $\Sigma2$ in Example \ref{e6}. For any $n\in \mathbb{N}$, define the function $f_{n}: \Sigma2\rightarrow\Sigma \mathbb{N}$ by
\[f_{n}(x)=
\begin{cases}
n,&x=1;\\
0,&x=0.
\end{cases}
\]
Then $f_{n}$ is continuous and $\{{\uparrow}f_{n}\mid n\in \mathbb{N}\}\subseteq K([\Sigma2, \Sigma \mathbb{N}])$ is a filtered family. In fact, $\{\{0,1\}\}\subseteq \mathcal {O}(\Sigma2)$ and ${\uparrow}4\in \sigma(\mathbb{N}){\setminus}\{\emptyset\}$. Let $U=N(\{\{0,1\}\}\leftarrow{\uparrow}4)=\{g\in [\Sigma2, \Sigma \mathbb{N}]\mid g^{-1}({\uparrow}4)\in \{\{0,1\}\}\}=\{g\in [\Sigma2, \Sigma \mathbb{N}]\mid g^{-1}({\uparrow}4)= \{0,1\}\}.$ Then $\bigcap_{n\in \mathbb{N}}{\uparrow}f_{n}=\emptyset\subseteq U$. But for any $n\in \mathbb{N}$, $f_{n}\notin U$ due to the fact that \[f_{n}^{-1}({\uparrow}4)=
\begin{cases}
\{1\},&n\geq4;\\
\emptyset,&n<4.
\end{cases}
\]
So ${\uparrow}f_{n}\nsubseteq U$ for any $n\in \mathbb{N}$. It implies that $[\Sigma2, \Sigma \mathbb{N}]$ is not a weak well-filtered space. So it is not an $S^{\ast}$-well-filtered space by Proposition \ref{p7}.
\end{exmp}
\section{$S^{\ast}$-well-filteredness of Smyth power spaces}\label{s6}
In this section, we focus on investigating the $S^{\ast}$-well-filteredness of Smyth power spaces.

For a topological space $X$, the set of all nonempty compact saturated subsets of $X$ is denoted by $K(X)$, a canonical topology is generated by the sets $${\Box}U=\{K'\in K(X)\mid K'\subseteq U\},$$
where $U$ ranges over the open subsets of $X$, this is the so-called upper Vietoris topology. We use $P_{S}(K(X))$ to denote the resulting topological space. The space $P_{S}(K(X))$, denoted shortly by $P_{S}(X)$, which is called the Smyth power space or upper space of $X$ (see \cite{h2013, s1993}). It is obvious that the specialization order on $P_{S}(X)$ is the reverse inclusion order, that is, $K_{1}, K_{2}\in K(X)$, $K_{1}\leq K_{2}$ if and only if $K_{2}\subseteq K_{1}$.
\begin{prop}\label{p15} Let $X$ be
a $T_{0}$-space. If $P_{S}(X)$ is a $d^{\ast}$-space, then $X$ is an $S^{\ast}$-well-filtered space.
\end{prop}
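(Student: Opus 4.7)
The plan is to translate the $S^{\ast}$-well-filteredness condition on $X$ into the $d^{\ast}$-space condition on $P_{S}(X)$, using the standard correspondence between a filtered family in $K(X)$ and a directed set in the specialization order of $P_{S}(X)$. Recall that on $P_{S}(X)$ the specialization order is reverse inclusion, so $\uparrow_{P_{S}(X)}\!K=\{K'\in K(X)\mid K'\subseteq K\}$. A filtered family $\{K_{i}\mid i\in I\}\subseteq K(X)$ is therefore a directed subset $\mathcal{D}=\{K_{i}\mid i\in I\}\in\mathcal{D}(P_{S}(X))$, and any $G\in K(X)$ is just a point of $P_{S}(X)$.

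Given filtered $\{K_{i}\}\subseteq K(X)$, $G\in K(X)$ and $U\in\mathcal{O}(X)\setminus\{\emptyset\}$ with $\bigcap_{i\in I}K_{i}\cap G\subseteq U$, I would take the open set $\Box U\in\mathcal{O}(P_{S}(X))$ and first verify $\Box U\neq\emptyset$: pick $x\in U$, then ${\uparrow}x\in K(X)$ and ${\uparrow}x\subseteq U$ because $U$ is an upper set, so ${\uparrow}x\in\Box U$. Next I would translate the inclusion hypothesis into $P_{S}(X)$: for any $K'\in \bigcap_{i\in I}\uparrow_{P_{S}(X)}\!K_{i}\cap \uparrow_{P_{S}(X)}\!G$ we have $K'\subseteq \bigcap_{i\in I}K_{i}\cap G\subseteq U$, hence $K'\in\Box U$. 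Thus $\bigcap_{i\in I}\uparrow_{P_{S}(X)}\!K_{i}\cap\uparrow_{P_{S}(X)}\!G\subseteq\Box U$.

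Applying the $d^{\ast}$-space property of $P_{S}(X)$ to the directed set $\mathcal{D}=\{K_{i}\}$, the point $G$, and the nonempty open $\Box U$, one obtains some $i_{0}\in I$ with $\uparrow_{P_{S}(X)}\!K_{i_{0}}\cap\uparrow_{P_{S}(X)}\!G\subseteq\Box U$. The final step is to deduce $K_{i_{0}}\cap G\subseteq U$ at the level of points of $X$: for each $x\in K_{i_{0}}\cap G$, since $K_{i_{0}}$ and $G$ are saturated (upper) sets we have ${\uparrow}x\subseteq K_{i_{0}}$ and ${\uparrow}x\subseteq G$, so ${\uparrow}x\in\uparrow_{P_{S}(X)}\!K_{i_{0}}\cap\uparrow_{P_{S}(X)}\!G\subseteq\Box U$, giving $x\in{\uparrow}x\subseteq U$.

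The part I expect to be most delicate is the last passage from the containment $\uparrow_{P_{S}(X)}\!K_{i_{0}}\cap\uparrow_{P_{S}(X)}\!G\subseteq\Box U$ back to $K_{i_{0}}\cap G\subseteq U$, because $K_{i_{0}}\cap G$ need not be compact (so it is not itself a point of $P_{S}(X)$ one can simply test against $\Box U$). The trick of running the argument pointwise via the principal filters ${\uparrow}x$ avoids invoking coherence of $X$, and this is the one place where one must be careful that all objects used actually lie in $K(X)$; everything else is a direct bookkeeping translation between $X$ and $P_{S}(X)$.
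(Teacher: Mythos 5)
Your proof is correct and follows essentially the same route as the paper: translate the filtered family into a directed subset of $P_{S}(X)$, observe that $\bigcap_{i}\uparrow_{P_{S}(X)}K_{i}\cap\uparrow_{P_{S}(X)}G\subseteq\Box U$, apply the $d^{\ast}$-property, and recover $K_{i_{0}}\cap G\subseteq U$ pointwise via the principal filters ${\uparrow}x$. Your explicit check that $\Box U\neq\emptyset$ is a small detail the paper leaves implicit, but otherwise the two arguments coincide.
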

\begin{proof}
Assume that $\{K_{i}\mid i\in I\}\subseteq K(X)$ is a filtered family, $G\in K(X)$ and $U\in \mathcal {O}(X){\setminus}\{\emptyset\}$ satisfying $\bigcap_{i\in I} K_{i}\cap G\subseteq U$. Then $\{K_{i}\mid i\in I\}\in \mathcal {D}(P_{S}(X))$ and $\bigcap_{i\in I}{\uparrow_{P_{S}(X)}} K_{i}\cap {\uparrow_{P_{S}(X)}}G=\{K\in P_{S}(X)\mid K\subseteq\bigcap_{i\in I} K_{i}\cap G\}\subseteq {\Box}U$. Because $P_{S}(X)$ is a $d^{\ast}$-space, there exists $i_{0}\in I$ such that ${\uparrow_{P_{S}(X)}}K_{i_{0}}\cap {\uparrow_{P_{S}(X)}}G\subseteq {\Box}U$. For any $x\in K_{i_{0}}\cap G$, one has that ${\uparrow}x\in {\uparrow_{P_{S}(X)}}K_{i_{0}}\cap {\uparrow_{P_{S}(X)}}G\subseteq {\Box}U$. Therefore, ${\uparrow}x\subseteq U$, that is, $x\in U$. So $X$ is an $S^{\ast}$-well-filtered space.
\end{proof}
The following corollary is immediately obtained by Propositons \ref{p8} and \ref{p15}
\begin{cor}\label{c4}(\cite{c2023}) Let $X$ be
a $T_{0}$-space. If $P_{S}(X)$ is a $d^{\ast}$-space, then $X$ is a $d^{\ast}$-space.
\end{cor}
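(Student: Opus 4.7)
The plan is to obtain Corollary \ref{c4} as an immediate concatenation of two results that have just been proved, namely Proposition \ref{p15} and part (\ref{p8i2}) of Proposition \ref{p8}. No new topological argument is required: the work has already been done in those two statements, and the corollary merely records the composition of the implications they supply.

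More concretely, suppose $X$ is a $T_{0}$-space for which $P_{S}(X)$ is a $d^{\ast}$-space. First, I would invoke Proposition \ref{p15} directly on this hypothesis to conclude that $X$ is an $S^{\ast}$-well-filtered space. Second, I would apply Proposition \ref{p8}(\ref{p8i2}), which asserts that every $S^{\ast}$-well-filtered space is a $d^{\ast}$-space, to deduce that $X$ itself is a $d^{\ast}$-space. Chaining these two implications gives the desired conclusion.

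Because the argument is a two-line syllogism, there is essentially no obstacle: the only thing one needs to verify is that the hypothesis of Proposition \ref{p15} is exactly the hypothesis of the corollary (it is), and that the conclusion of Proposition \ref{p15} matches the hypothesis of Proposition \ref{p8}(\ref{p8i2}) (it does). The genuine mathematical content lies in Propositions \ref{p15} and \ref{p8}, whose proofs have already been carried out earlier in the section; the corollary simply packages their composition for later reference, and this is precisely why the paper introduces it as a ``direct consequence.''
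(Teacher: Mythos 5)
Your proposal is correct and matches the paper exactly: the paper also derives this corollary by chaining Proposition \ref{p15} (to get that $X$ is $S^{\ast}$-well-filtered) with Proposition \ref{p8}(\ref{p8i2}) (that every $S^{\ast}$-well-filtered space is a $d^{\ast}$-space). Nothing further is needed.
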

\begin{lem}\label{l9}(\cite{l2017}) For a weak well-filtered space $X$ and a filtered family $\mathcal {K}\subseteq K(X)$, $\bigcap\mathcal {K}\in K(X)$.
\end{lem}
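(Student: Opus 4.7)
The plan is to verify directly the two defining properties of membership in $K(X)$: namely that $\bigcap \mathcal{K}$ is saturated and that every open cover of it admits a finite subcover. Saturation is immediate from the definition, since each $K \in \mathcal{K}$ is saturated (an intersection of its open neighborhoods), and an intersection of upper sets is again an upper set in the specialization order. So the whole content of the lemma lies in establishing compactness.

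For the compactness argument, I would take an arbitrary open cover $\{V_\alpha\}_{\alpha \in A}$ of $\bigcap \mathcal{K}$ and pass to its union $U = \bigcup_{\alpha \in A} V_\alpha$. The idea is to feed $U$ into the weak well-filteredness hypothesis: because $\bigcap \mathcal{K} \subseteq U$ and $U$ is nonempty (the degenerate case $U=\emptyset$ forces $\bigcap\mathcal{K}=\emptyset$ and can be dismissed under the convention that $K(X)$ carries only nonempty sets), Definition \ref{d3} produces some index $i_0 \in I$ with $K_{i_0} \subseteq U = \bigcup_\alpha V_\alpha$. Since $K_{i_0}$ itself is compact, finitely many of the $V_\alpha$ cover $K_{i_0}$, and those same finitely many $V_\alpha$ therefore cover the smaller set $\bigcap \mathcal{K}$. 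This is precisely the finite-subcover property we needed.

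The one step I expect to require care is the reduction to the nonempty-$U$ case, because the weak well-filteredness axiom in Definition \ref{d3} applies only when the open set is nonempty. Example \ref{e1} shows that a weak well-filtered space can harbor a filtered family with empty intersection, so strictly speaking the statement $\bigcap \mathcal{K} \in K(X)$ should be read either as tacitly assuming $\bigcap \mathcal{K}\neq \emptyset$, or, equivalently, as asserting that $\bigcap \mathcal{K}$ is compact-saturated (possibly empty). Once this convention is fixed, no further obstacle arises; the rest of the proof is a routine cover-to-cover transfer through a single witness $K_{i_0}$ supplied by weak well-filteredness.
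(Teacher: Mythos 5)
Your proof is correct and is essentially the standard argument (the paper only cites this lemma from \cite{l2017} without proof): saturation is preserved by intersections, and compactness follows by feeding the union of an open cover into weak well-filteredness to obtain a single witness $K_{i_0}$ whose finite subcover already covers $\bigcap\mathcal{K}$. Your caveat about nonemptiness is also well taken --- since $K(X)$ is defined here to consist of \emph{nonempty} compact saturated sets and Example~\ref{e1} exhibits a weak well-filtered space with a filtered family having empty intersection, the conclusion must indeed be read as ``$\bigcap\mathcal{K}$ is compact saturated (possibly empty)'' or under the tacit assumption $\bigcap\mathcal{K}\neq\emptyset$.
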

\begin{prop}\label{p16} Let $X$ be
a $T_{0}$-space. If $X$ is a coherent weak well-filtered space, then $P_{S}(X)$ is a $d^{\ast}$-space.
\end{prop}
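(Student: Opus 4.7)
The plan is to verify the $d^{\ast}$-space condition on $P_{S}(X)$ directly, using Proposition \ref{p7} (which converts the coherent weak well-filtered hypothesis on $X$ into $S^{\ast}$-well-filteredness of $X$) and Lemma \ref{l9} (filtered intersections of members of $K(X)$ remain in $K(X)$) as the two substantive ingredients. Fix a directed family $\{K_{i}\mid i\in I\}\subseteq P_{S}(X)$---equivalently, a filtered family in $K(X)$ under inclusion, since the specialization order on $P_{S}(X)$ is reverse inclusion---together with an element $G\in P_{S}(X)$ and a nonempty $\mathcal{U}\in\mathcal{O}(P_{S}(X))$ satisfying $\bigcap_{i\in I}{\uparrow_{P_{S}(X)}}K_{i}\cap{\uparrow_{P_{S}(X)}}G\subseteq\mathcal{U}$. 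Unfolding the specialization order, this intersection coincides with $\{H\in K(X)\mid H\subseteq\bigcap_{i\in I}K_{i}\cap G\}$, and Lemma \ref{l9} already gives $\bigcap_{i\in I}K_{i}\in K(X)$. I split the argument according to whether $\bigcap_{i\in I}K_{i}\cap G$ is empty.

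If $\bigcap_{i\in I}K_{i}\cap G\neq\emptyset$, then coherence of $X$ yields $\bigcap_{i\in I}K_{i}\cap G\in K(X)$, so this compact saturated set itself belongs to $\mathcal{U}$. By the definition of the upper Vietoris topology, there is a basic neighborhood ${\Box}V\subseteq\mathcal{U}$ with $\bigcap_{i\in I}K_{i}\cap G\subseteq V$; such a $V$ is necessarily nonempty. By Proposition \ref{p7}, $X$ is $S^{\ast}$-well-filtered, so some $i_{0}\in I$ satisfies $K_{i_{0}}\cap G\subseteq V$, giving ${\uparrow_{P_{S}(X)}}K_{i_{0}}\cap{\uparrow_{P_{S}(X)}}G\subseteq{\Box}V\subseteq\mathcal{U}$. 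In the opposite case $\bigcap_{i\in I}K_{i}\cap G=\emptyset$, I claim some $K_{i_{0}}\cap G$ is already empty, whence trivially ${\uparrow_{P_{S}(X)}}K_{i_{0}}\cap{\uparrow_{P_{S}(X)}}G=\emptyset\subseteq\mathcal{U}$: otherwise every $K_{i}\cap G$ is nonempty and compact by coherence, the family $\{K_{i}\cap G\mid i\in I\}$ is filtered in $K(X)$, and Lemma \ref{l9} forces $\bigcap_{i\in I}(K_{i}\cap G)\in K(X)$, contradicting emptiness.

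The bulk of the argument is notational---translating between the reverse-inclusion specialization order on $P_{S}(X)$ and the natural inclusion order on $K(X)$. The main obstacle I anticipate is the empty-intersection case, where the hypothesis on $\mathcal{U}$ is vacuous yet the conclusion still demands a specific $i_{0}$; it is the combination of Lemma \ref{l9} and coherence that rescues the argument by forcing some individual $K_{i}\cap G$ to be empty already, so that its ${\uparrow_{P_{S}(X)}}$ is empty in $P_{S}(X)$ as well.
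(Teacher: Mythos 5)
Your overall strategy matches the paper's, and two of your three cases are fine: the nonempty case (using Lemma \ref{l9} for compactness of $\bigcap_i K_i$, coherence, a basic neighborhood $\Box V\subseteq\mathcal{U}$, and Proposition \ref{p7}) is correct, as is the observation that $K_{i_0}\cap G=\emptyset$ makes ${\uparrow_{P_S(X)}}K_{i_0}\cap{\uparrow_{P_S(X)}}G$ empty. The gap is your claim that $\bigcap_{i\in I}K_i\cap G=\emptyset$ forces some $K_{i_0}\cap G$ to be empty already. You argue by contradiction via Lemma \ref{l9}: if every $K_i\cap G$ were nonempty, the filtered family $\{K_i\cap G\mid i\in I\}\subseteq K(X)$ would have intersection in $K(X)$, hence nonempty. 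But the nonemptiness half of that conclusion is exactly the part of Lemma \ref{l9} that cannot be taken at face value: in a weak well-filtered space a filtered family of \emph{nonempty} compact saturated sets can have empty intersection. The paper's own Example \ref{e1} gives a witness satisfying the hypotheses of this very proposition: $\Sigma\mathbb{N}$ is coherent and weak well-filtered, yet with $K_n={\uparrow}n$ and $G=\mathbb{N}$ every $K_n\cap G={\uparrow}n$ is nonempty while $\bigcap_{n}K_n\cap G=\emptyset$. So your dichotomy is false, and the case ``all $K_i\cap G$ nonempty but $\bigcap_i(K_i\cap G)=\emptyset$'' is left with no argument. What Lemma \ref{l9} genuinely delivers, and what the paper uses it for, is compactness and saturation of the intersection, not its nonemptiness.

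The missing case is precisely where the nonemptiness of $\mathcal{U}$ --- the defining feature of the $d^{\ast}$/weak well-filtered notions --- must enter. The repair is short and returns you to the paper's proof: since $\mathcal{U}\neq\emptyset$, it contains some $\Box V$ with $V\in\mathcal{O}(X){\setminus}\{\emptyset\}$; then $\bigcap_{i\in I}(K_i\cap G)=\emptyset\subseteq V$, and applying weak well-filteredness to the filtered family $\{K_i\cap G\mid i\in I\}$ of nonempty compact saturated sets (compact by coherence) yields $K_{i_1}\cap G\subseteq V$ for some $i_1$, whence ${\uparrow_{P_S(X)}}K_{i_1}\cap{\uparrow_{P_S(X)}}G\subseteq\Box V\subseteq\mathcal{U}$. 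Note that the paper avoids the trap by splitting only on whether some individual $K_{i_0}\cap G$ is empty, never on whether the total intersection is empty.
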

\begin{proof}
Let $X$ be a coherent weak well-filtered space. Assume that $\{K_{i}\mid i\in I\}\in \mathcal {D}(P_{S}(X))$, $G\in P_{S}(X)$ and $\mathcal {U}\in \mathcal {O}(P_{S}(X)){\setminus}\{\emptyset\}$ satisfying $\bigcap_{i\in I}{\uparrow_{P_{S}(X)}} K_{i}\cap {\uparrow_{P_{S}(X)}}G\subseteq \mathcal {U}$. Then $\{K_{i}\mid i\in I\}\subseteq K(X)$ is a filtered family and $\mathcal {U}=\bigcup_{j\in J}{\Box}U_{j}$ for some $\{U_{j}\mid j\in J\}\subseteq\mathcal {O}(X){\setminus}\{\emptyset\}$. If there exists $i_{0}\in I$ such that $K_{i_{0}}\cap G=\emptyset$, then ${\uparrow_{P_{S}(X)}}K_{i_{0}}\cap {\uparrow_{P_{S}(X)}}G\subseteq \mathcal {U}$. Now suppose that $K_{i}\cap G\neq\emptyset$ for all $i\in I$. Then for all $i\in I$, $K_{i}\cap G$ is compact since $X$ is coherent. Thus, $\{K_{i}\cap G\mid i\in I\}\subseteq K(X)$ is a filtered family and $\bigcap_{i\in I }(K_{i}\cap G)\in K(X)$ from Lemma \ref{l9}.
So ${\uparrow_{P_{S}(X)}}\bigcap_{i\in I }(K_{i}\cap G)=\bigcap_{P_{S}(X)}{\uparrow_{P_{S}(X)}}K_{i}\cap{\uparrow_{P_{S}(X)}}G\subseteq \mathcal {U}=\bigcup_{j\in J}{\Box}U_{j}$. It implies that there exists $j_{0}\in J$ such that $\bigcap_{i\in I }(K_{i}\cap G)\in {\Box}U_{j_{0}}$. Hence, $\bigcap_{i\in I }(K_{i}\cap G)\subseteq U_{j_{0}}$. By the weak well-filteredness of $X$, there exists $i_{1}\in I$ such that $K_{i_{1}}\cap G\subseteq U_{j_{0}}$. Therefore, ${\uparrow_{P_{S}(X)}}K_{i_{1}}\cap{\uparrow_{P_{S}(X)}}G={\uparrow_{P_{S}(X)}}K_{i_{1}}\cap G\subseteq {\Box}U_{j_{0}}\subseteq\mathcal {U}$. Thus, $P_{S}(X)$ is a $d^{\ast}$-space.
\end{proof}
The following two corollaries are trivial by Propositons \ref{p7} and \ref{p16}.
\begin{cor}\label{c5} Let $X$ be
a $T_{0}$-space. If $X$ is a coherent $S^{\ast}$-well-filtered space, then $P_{S}(X)$ is a $d^{\ast}$-space.
\end{cor}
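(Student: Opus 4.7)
The plan is essentially to chain together the two results the paper has just proved, namely Proposition \ref{p7} and Proposition \ref{p16}. Specifically, I would start by assuming that $X$ is a coherent $S^{\ast}$-well-filtered $T_{0}$-space, and then invoke the coherent half of Proposition \ref{p7}, i.e.\ the implication $(\ref{p7i1}) \Leftrightarrow (\ref{p7i2})$ under the coherence hypothesis, to deduce that $X$ is also weak well-filtered. At that point $X$ satisfies the precise hypotheses of Proposition \ref{p16} (coherent plus weak well-filtered), and applying that proposition directly yields that $P_{S}(X)$ is a $d^{\ast}$-space.

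So the argument reduces to a two-step implication chain
\begin{equation*}
\text{coherent} + S^{\ast}\text{-well-filtered} \;\Longrightarrow\; \text{coherent} + \text{weak well-filtered} \;\Longrightarrow\; P_{S}(X)\text{ is }d^{\ast}\text{-space},
\end{equation*}
where the first step is Proposition \ref{p7} and the second is Proposition \ref{p16}. There is no genuine obstacle here: the corollary is flagged as ``trivial'' in the text precisely because neither step requires any new construction. In particular, one does not need to re-run the Smyth-power-space argument (passing to $\bigcap_{i\in I}(K_{i}\cap G)\in K(X)$ via Lemma \ref{l9}, then using coherence and weak well-filteredness against a basic open ${\Box}U_{j_{0}}$) — that work was already carried out inside the proof of Proposition \ref{p16}.

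If I were asked to write the proof out formally, I would phrase it in essentially one sentence: since $X$ is coherent, Proposition \ref{p7} gives that the $S^{\ast}$-well-filteredness of $X$ is equivalent to its weak well-filteredness, so the hypotheses of Proposition \ref{p16} are met, whence $P_{S}(X)$ is a $d^{\ast}$-space. The only thing to double-check is that the coherence hypothesis on $X$ is the one appearing in both propositions (it is: in both places ``coherent'' means that the intersection of any two compact saturated sets is compact), so no reconciliation of definitions is needed.
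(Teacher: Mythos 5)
Your proposal is correct and matches the paper's own (one-line) justification exactly: the corollary is obtained by chaining Proposition \ref{p7} with Proposition \ref{p16}. The only cosmetic remark is that you only need the unconditional implication $(\ref{p7i1})\Rightarrow(\ref{p7i2})$ of Proposition \ref{p7} to get weak well-filteredness, not the full equivalence under coherence.
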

\begin{cor}\label{c6} Let $X$ be a KC space. If $X$ is $S^{\ast}$-well-filtered, then $P_{S}(X)$ is a $d^{\ast}$-space.
\end{cor}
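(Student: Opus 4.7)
The plan is to derive Corollary \ref{c6} directly from Corollary \ref{c5} by exploiting the fact that every KC space is coherent. Specifically, the paper already records (just before Proposition \ref{p7}) that ``every KC space is coherent'', so the KC hypothesis immediately upgrades the assumption of $S^{\ast}$-well-filteredness into the combination of coherence and $S^{\ast}$-well-filteredness required by Corollary \ref{c5}.

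Concretely, I would argue as follows. Suppose $X$ is a KC space and $S^{\ast}$-well-filtered. To invoke Corollary \ref{c5}, I need only verify coherence. Given $K_{1},K_{2}\in K(X)$, since $X$ is KC every compact subset of $X$ is closed, and in particular $K_{1}$ and $K_{2}$ are closed. Hence $K_{1}\cap K_{2}$ is a closed subset of the compact space $K_{1}$, so $K_{1}\cap K_{2}$ is compact. (It is also saturated as an intersection of saturated sets.) Thus $X$ is coherent. Applying Corollary \ref{c5} to the coherent $S^{\ast}$-well-filtered space $X$ yields that $P_{S}(X)$ is a $d^{\ast}$-space.

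There is essentially no obstacle here: the only nontrivial input, namely that coherence plus $S^{\ast}$-well-filteredness on $X$ forces $P_{S}(X)$ to be a $d^{\ast}$-space, has already been established in Proposition \ref{p16} and Corollary \ref{c5}. The proof is therefore a one-line deduction, and can be written simply as ``By hypothesis $X$ is KC, hence coherent, hence the conclusion follows from Corollary \ref{c5}.''
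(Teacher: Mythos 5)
Your proposal is correct and matches the paper's own (implicit) argument: the paper derives Corollary \ref{c6} from Corollary \ref{c5} via the observation, recorded just before Proposition \ref{p7}, that every KC space is coherent. Your explicit verification that a KC space is coherent (closed subsets of compact sets are compact) is a fine, if routine, addition.
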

From Propositions \ref{p7}, \ref{p8}, \ref{p15} and \ref{p16}, we deduce the following statement.
\begin{thm}\label{t2} Let $X$ be
a $T_{0}$-space. Consider the following conditions:
\begin{enumerate}[(1)]
  \item\label{t2i1} $P_{S}(X)$ is an $S^{\ast}$-well-filtered space;
  \item\label{t2i2} $P_{S}(X)$ is a $d^{\ast}$-space;
  \item\label{t2i3} $X$ is an $S^{\ast}$-well-filtered space;
  \item\label{t2i4} $X$ is a weak well-filtered space.
\end{enumerate}
Then $(\ref{t2i1})\Rightarrow(\ref{t2i2})\Rightarrow(\ref{t2i3})\Rightarrow(\ref{t2i4})$. Moreover, $(\ref{t2i2})\Leftrightarrow(\ref{t2i3})\Leftrightarrow(\ref{t2i4})$ if $X$ is coherent.
\end{thm}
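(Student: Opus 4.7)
The plan is to assemble Theorem \ref{t2} by chaining the four propositions that have already been proved in this section and in Section \ref{s4}; no new technical work is required, only a careful bookkeeping of which implication comes from which result. Throughout, I would keep in mind that the specialization order on $P_{S}(X)$ is reverse inclusion, so that a filtered family in $K(X)$ corresponds to a directed family in the specialization order of $P_{S}(X)$, which is what makes Proposition \ref{p15} applicable.

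For the linear chain $(\ref{t2i1})\Rightarrow(\ref{t2i2})\Rightarrow(\ref{t2i3})\Rightarrow(\ref{t2i4})$, I would argue in three short steps. First, $(\ref{t2i1})\Rightarrow(\ref{t2i2})$ follows by applying Proposition \ref{p8}(\ref{p8i2}) to the space $P_{S}(X)$ itself: every $S^{\ast}$-well-filtered space is a $d^{\ast}$-space. Second, $(\ref{t2i2})\Rightarrow(\ref{t2i3})$ is precisely Proposition \ref{p15}. Third, $(\ref{t2i3})\Rightarrow(\ref{t2i4})$ is the forward direction of Proposition \ref{p7}. This completes the general part, and the clean modular structure means the only real check is that no hypothesis is being smuggled in: each invoked proposition applies to an arbitrary $T_{0}$-space.

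For the coherent case I would close the loop $(\ref{t2i4})\Rightarrow(\ref{t2i2})$ using Proposition \ref{p16}, which says that whenever $X$ is coherent and weak well-filtered, $P_{S}(X)$ is a $d^{\ast}$-space. Together with the implications just established, this yields $(\ref{t2i2})\Leftrightarrow(\ref{t2i3})\Leftrightarrow(\ref{t2i4})$ under coherence. As an alternative route, one could replace this step by the equivalence part of Proposition \ref{p7} (giving $(\ref{t2i3})\Leftrightarrow(\ref{t2i4})$ directly under coherence) combined with Proposition \ref{p16} to recover $(\ref{t2i4})\Rightarrow(\ref{t2i2})$; either organization works.

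Honestly, there is no substantive obstacle here, since all the analytical content is packaged in Propositions \ref{p7}, \ref{p8}, \ref{p15} and \ref{p16}. The only subtlety worth flagging in the write-up is that coherence is not needed for any of the three implications $(\ref{t2i1})\Rightarrow(\ref{t2i2})\Rightarrow(\ref{t2i3})\Rightarrow(\ref{t2i4})$, but is essential for reversing the last two: without coherence, the weak-well-filtered example in Example \ref{e4} shows that $(\ref{t2i4})$ need not imply $(\ref{t2i3})$, so the hypothesis genuinely cannot be dropped in the ``moreover'' clause.
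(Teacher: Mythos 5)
Your proposal is correct and matches the paper's own derivation exactly: the paper deduces Theorem \ref{t2} precisely from Propositions \ref{p7}, \ref{p8}, \ref{p15} and \ref{p16}, assembled in the same way you describe (Proposition \ref{p8}(\ref{p8i2}) applied to $P_{S}(X)$ for $(\ref{t2i1})\Rightarrow(\ref{t2i2})$, Proposition \ref{p15} for $(\ref{t2i2})\Rightarrow(\ref{t2i3})$, Proposition \ref{p7} for $(\ref{t2i3})\Rightarrow(\ref{t2i4})$, and Proposition \ref{p16} to close the cycle under coherence). No gaps.
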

\begin{cor}\label{c7} Let $X$ be a $T_{0}$-space. Consider the following conditions:
\begin{enumerate}[(1)]
  \item\label{c7i1} $P_{S}(X)$ is an $S^{\ast}$-well-filtered space;
  \item\label{c7i2} $P_{S}(X)$ is a $d^{\ast}$-space;
  \item\label{c7i3} $X$ is an $S^{\ast}$-well-filtered space;
  \item\label{c7i4} $X$ is a weak well-filtered space.
\end{enumerate}
Then $(\ref{c7i1})\Rightarrow(\ref{c7i2})\Rightarrow(\ref{c7i3})\Rightarrow(\ref{c7i4})$. Moreover, $(\ref{t2i2})\Leftrightarrow(\ref{t2i3})\Leftrightarrow(\ref{t2i4})$ if $X$ is a KC space.
\end{cor}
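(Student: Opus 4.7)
The plan is to obtain this corollary as an immediate specialization of Theorem \ref{t2}, using the remark (stated just before Proposition \ref{p7}) that every KC space is coherent. With that observation in hand, the statement reduces entirely to the previous theorem and requires no new compactness or filteredness arguments.

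First I would dispatch the forward chain $(\ref{c7i1})\Rightarrow(\ref{c7i2})\Rightarrow(\ref{c7i3})\Rightarrow(\ref{c7i4})$, which is valid with no coherence assumption on $X$. Each link has already been isolated as a separate result: $(\ref{c7i1})\Rightarrow(\ref{c7i2})$ is Proposition \ref{p8}(\ref{p8i2}) applied to $P_{S}(X)$; $(\ref{c7i2})\Rightarrow(\ref{c7i3})$ is exactly Proposition \ref{p15}; and $(\ref{c7i3})\Rightarrow(\ref{c7i4})$ is the first half of Proposition \ref{p7}. Equivalently, one can just cite the forward chain of Theorem \ref{t2}.

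For the \emph{Moreover} clause, assume $X$ is KC. I would briefly confirm that KC implies coherent: if $K_{1},K_{2}\in K(X)$, then $K_{2}$ is closed by the KC property, so $K_{1}\cap K_{2}$ is a closed subset of the compact set $K_{1}$ and is therefore compact, while saturation is automatic as the intersection of two saturated sets. Once coherence is in place, the equivalences $(\ref{c7i2})\Leftrightarrow(\ref{c7i3})\Leftrightarrow(\ref{c7i4})$ are precisely the \emph{Moreover} part of Theorem \ref{t2}. Combined with the forward chain above, the cycle closes. There is no real obstacle: the substantive arguments live in Proposition \ref{p16} and in Theorem \ref{t2}, and the only thing to double-check is that the earlier ``KC $\Rightarrow$ coherent'' remark is stated with enough precision to cite directly; if not, the one-line verification above can be inlined.
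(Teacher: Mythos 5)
Your proposal is correct and matches the paper's (implicit) treatment exactly: Corollary \ref{c7} is stated as an immediate consequence of Theorem \ref{t2} together with the observation, recorded just before Proposition \ref{p7}, that every KC space is coherent. Your one-line verification that KC implies coherent (a closed subset of a compact set is compact, and an intersection of saturated sets is saturated) is sound and fills in the only detail the paper leaves to the reader.
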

For a $T_{0}$-space $X$ and $U\in \mathcal {O}(X)$, let $\diamond U= \{A\in \Gamma(X)\mid A\cap U\neq\emptyset\}$. The lower Vietoris topology on $\Gamma(X){\setminus}\{\emptyset\}$ is the topology that has $\{\diamond V \mid V\in \mathcal {O}(X)\}$ as a subbasis and the resulting space is denoted by $P_{H}(X)$, called the Hoare power space or lower space of $X$ \cite{s1993}.
\begin{lem}\label{l19}(\cite{x2025}) For a $T_{0}$-space $X$, Hoare power space $P_{H}(X)$ is a strongly well-filtered space.
\end{lem}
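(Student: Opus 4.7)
The plan is to invoke Lemma \ref{l11} applied to the Hoare power space $P_{H}(X)$. Recall that the specialization order of $P_{H}(X)$ is set-inclusion on the collection of nonempty closed subsets of $X$, and that $P_{H}(X)$ is classically known to be sober (see \cite{s1993}); in particular it is a $d$-space, so every $\mathcal{C}\in\Gamma(P_{H}(X))$ is Scott-closed in $(\Gamma(X)\setminus\{\emptyset\},\subseteq)$. By Lemma \ref{l11}, to conclude the strong well-filteredness of $P_{H}(X)$ it suffices to verify, for every nonempty $\mathcal{C}\in\Gamma(P_{H}(X))$ and every $\mathcal{K}\in K(P_{H}(X))$, that (i) $\mathrm{max}(\mathcal{C})\neq\emptyset$ and (ii) $\downarrow(\mathcal{C}\cap\mathcal{K})$ is closed in $P_{H}(X)$.

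For (i), I would apply Zorn's lemma. Any chain $\{C_{\alpha}\}\subseteq\mathcal{C}$ is directed with respect to $\subseteq$, so its directed supremum $\overline{\bigcup_{\alpha}C_{\alpha}}$ exists in $P_{H}(X)$ and, by the Scott-closedness of $\mathcal{C}$, belongs to $\mathcal{C}$. Hence every chain in $\mathcal{C}$ admits an upper bound in $\mathcal{C}$, and Zorn furnishes a maximal element.

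For (ii), I would work with the complement: fix $B\in P_{H}(X)$ with $B\not\subseteq A$ for every $A\in\mathcal{C}\cap\mathcal{K}$. For each such $A$, pick a witness $x_{A}\in B\setminus A$ and an open $V_{A}\subseteq X$ with $x_{A}\in V_{A}$ and $V_{A}\cap A=\emptyset$ (available since $A\in\Gamma(X)$). Then each subbasic open $\diamond V_{A}=\{D\in P_{H}(X):D\cap V_{A}\neq\emptyset\}$ is an open neighbourhood of $B$ in $P_{H}(X)$ that avoids $A$. The goal is to combine, via compactness of $\mathcal{K}$ (in the lower Vietoris topology) and Scott-closedness of $\mathcal{C}$, these pointwise witnesses into a single finite family $V_{A_{1}},\ldots,V_{A_{n}}$ such that $\bigcap_{k=1}^{n}\diamond V_{A_{k}}$ is a basic open neighbourhood of $B$ that misses $\downarrow(\mathcal{C}\cap\mathcal{K})$ entirely; this yields the openness of the complement.

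The main obstacle is precisely step (ii): extracting a uniform finite witness from the pointwise ones requires a nontrivial selection argument, since compactness of $\mathcal{K}$ in the Hoare topology only gives covers by $\diamond$-sets, not by down-sets, and one has to ensure that any $B'$ in the intersection $\bigcap_{k}\diamond V_{A_{k}}$ still fails to be contained in any member of $\mathcal{C}\cap\mathcal{K}$. I expect to manage this via a Rudin-type/Alexander-subbase selection together with the ascending-chain description of $\mathcal{C}$ coming from its Scott-closedness; this is the technical heart of the argument, and is where the specific structure of the lower Vietoris topology on $\Gamma(X)\setminus\{\emptyset\}$ must be fully exploited.
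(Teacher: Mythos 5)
Your proposal is a plan rather than a proof, and the plan breaks down exactly where you say it does. The paper itself does not prove Lemma \ref{l19}: it imports it from \cite{x2025}, where it is obtained from the general theorem that the upper topology on a sup-complete poset is strongly well-filtered (note that the lower Vietoris topology on $\Gamma(X)\setminus\{\emptyset\}$ is precisely the upper topology of the sup-complete poset $(\Gamma(X)\setminus\{\emptyset\},\subseteq)$, since $\diamond V$ is the complement of the principal ideal ${\downarrow}(X\setminus V)$). That route never needs the closedness of ${\downarrow}(\mathcal{C}\cap\mathcal{K})$. Your step (i) is fine: $P_{H}(X)$ is a $d$-space (one does not even need sobriety for this), so closed sets are Scott-closed and Zorn applies.

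The genuine gap is step (ii), and it is not merely a missing computation. To show that the complement of ${\downarrow}(\mathcal{C}\cap\mathcal{K})$ is open you must, for each $B$ with $B\not\subseteq A$ for all $A\in\mathcal{C}\cap\mathcal{K}$, produce \emph{finitely many} open sets $V_{1},\dots,V_{n}$ of $X$, each meeting $B$, such that every $A\in\mathcal{C}\cap\mathcal{K}$ is disjoint from some $V_{k}$; equivalently, finitely many closed sets $C_{1},\dots,C_{n}$ with $B\nsubseteq C_{k}$ and $\mathcal{C}\cap\mathcal{K}\subseteq\bigcup_{k}{\downarrow}C_{k}$. Your pointwise witnesses give a cover of the compact set $\mathcal{C}\cap\mathcal{K}$ by the \emph{closed} sets ${\downarrow}A$ (equivalently, by the complements of the open sets $\diamond V_{A}$), and compactness of $\mathcal{K}$ in the lower Vietoris topology gives finite subcovers only for \emph{open} covers; so the "Rudin-type/Alexander-subbase selection" you defer to is precisely the statement you would need to prove, and no mechanism for it is supplied. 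Moreover, Lemma \ref{l11} (and Lemma \ref{l13}) gives only a \emph{sufficient} condition for strong well-filteredness, so the truth of Lemma \ref{l19} does not by itself guarantee that ${\downarrow}(\mathcal{C}\cap\mathcal{K})$ is closed in $P_{H}(X)$; you would need to establish that intermediate claim independently, and nothing in the paper or in your sketch does so. As it stands the argument is incomplete at its decisive step.
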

\begin{cor}\label{c8}
For a $T_{0}$-space $X$, Hoare power space $P_{H}(X)$ is an $S^{\ast}$-well-filtered space.
\end{cor}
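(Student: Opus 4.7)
The statement is an immediate corollary of Lemma \ref{l19} combined with an observation made just after Definition \ref{d12}, namely that every strongly well-filtered space is $S^{\ast}$-well-filtered. So the plan is very short: invoke Lemma \ref{l19} to conclude that $P_{H}(X)$ is strongly well-filtered, and then quote the implication ``strongly well-filtered $\Rightarrow$ $S^{\ast}$-well-filtered'' to finish.

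More concretely, first I would note that by Lemma \ref{l19}, for any $T_{0}$-space $X$ the Hoare power space $P_{H}(X)$ satisfies the defining condition of Definition \ref{d6}: for every filtered family $\{\mathcal{K}_{i}\}_{i\in I}\subseteq K(P_{H}(X))$, every $\mathcal{G}\in K(P_{H}(X))$ and every $\mathcal{U}\in \mathcal{O}(P_{H}(X))$, the inclusion $\bigcap_{i\in I}\mathcal{K}_{i}\cap \mathcal{G}\subseteq \mathcal{U}$ forces $\mathcal{K}_{i_{0}}\cap \mathcal{G}\subseteq \mathcal{U}$ for some $i_{0}\in I$. Then I would observe that Definition \ref{d12} is literally the weakening of Definition \ref{d6} in which $\mathcal{U}$ is additionally required to be nonempty, so any witness for the strong version is automatically a witness for the $S^{\ast}$-version. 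Hence $P_{H}(X)$ is $S^{\ast}$-well-filtered.

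There is no real obstacle here, since the containment $\mathbf{S}\text{-}\mathbf{Top}_{w}\subseteq \mathbf{S^{\ast}}\text{-}\mathbf{Top}_{w}$ is direct from the two definitions and is already used silently elsewhere in the paper (for instance in the line following Definition \ref{d12}). The only thing worth flagging is that the corollary is phrased without any hypothesis on $X$ beyond $T_{0}$, matching Lemma \ref{l19} exactly, so no further verification is required and the proof reduces to one sentence.
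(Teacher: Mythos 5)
Your proposal is correct and matches the paper's intended argument exactly: Lemma \ref{l19} gives that $P_{H}(X)$ is strongly well-filtered, and the implication noted right after Definition \ref{d12} (every strongly well-filtered space is $S^{\ast}$-well-filtered, since Definition \ref{d12} only restricts Definition \ref{d6} to nonempty $U$) yields the corollary.
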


\bibliographystyle{elsarticle-num}

\end{document}